\pgfplotsset{compat=1.15}
\newtheorem{theorem}{Theorem}[section]
\newtheorem{lemma}[theorem]{Lemma}
\newtheorem{conjecture}[theorem]{Conjecture}
\newtheorem{claim}[theorem]{Claim}
\newtheorem{observation}[theorem]{Observation}
\theoremstyle{definition}
\theoremstyle{remark}
\numberwithin{equation}{section}
\newcommand*{\myproofname}{Proof}
\newenvironment{myproof}[1][\myproofname]{\begin{proof}[#1]}{\end{proof}}
\DeclareMathOperator{\acosh}{acosh}
\DeclareMathOperator{\acot}{acot}
\DeclareMathOperator{\atanh}{atanh}
\DeclareMathOperator{\sys}{sys}
\DeclareMathOperator{\diam}{diam}
\DeclareMathOperator{\cat}{cat}
\definecolor{xdxdff}{rgb}{0.49019607843137253,0.49019607843137253,1}
\definecolor{uququq}{rgb}{0.25098039215686274,0.25098039215686274,0.25098039215686274}
\definecolor{qqqqff}{rgb}{0,0,1}
\definecolor{aqaqaq}{rgb}{0.6274509803921569,0.6274509803921569,0.6274509803921569}
\begin{document}

\title{Cops and Robber on Hyperbolic Manifolds}

\author{Vesna Ir\v si\v c}
\address{University of Ljubljana}
\email{vesna.irsic@fmf.uni-lj.si}
\thanks{V.I.~was supported by the Slovenian Research and Innovation Agency (ARIS) under the grant Z1-50003, and by the European Union (ERC, KARST, 101071836).}

\author{Bojan Mohar}
\address{Simon Fraser University}
\email{mohar@sfu.ca}
\thanks{B.M. was supported in part by an NSERC Discovery Grant R611450 (Canada),
by the ERC Synergy grant (European Union, ERC, KARST, project number
101071836),
and by the Research Project N1-0218 of ARIS (Slovenia). On
leave from:
FMF, Department of Mathematics, University of Ljubljana, Ljubljana,
Slovenia.}

\author{Alexandra Wesolek}
\address{Technische Universität Berlin}
\email{alexandrawesolek@gmail.com}
\thanks{A.W. was supported by the Vanier Canada Scholarship program and by the Deutsche Forschungsgemeinschaft (DFG, German Research Foundation) under Germany's Excellence Strategy – The Berlin Mathematics Research Center MATH+ (EXC-2046/1, project ID:
390685689).}

\subjclass[2020]{91A05, 91A50, 32Q45}

\date{}

\begin{abstract}
The Cops and Robber game on geodesic spaces is a pursuit-evasion game with discrete steps which captures the behavior of the game played on graphs, as well as that of continuous pursuit-evasion games. One of the outstanding open problems about the game on graphs is to determine which graphs embeddable in a surface of genus $g$ have largest cop number. It is known that the cop number of genus $g$ graphs is $O(g)$ and that there are examples whose cop number is $\tilde\Omega(\sqrt{g}\,)$. The same phenomenon occurs when the game is played on geodesic surfaces. 

In this paper we obtain a surprising result about the game on a surface with constant curvature. It is shown that two cops have a strategy to come arbitrarily close to the robber, independently of the genus. We also discuss upper bounds on the number of cops needed to catch the robber. 

Our results generalize to higher-dimensional hyperbolic manifolds.

\keywords{game of cops and robber; hyperbolic plane; covering space method}
\end{abstract}

\maketitle

\section{Introduction}
\label{sec:intro}

The Cops and Robber game is a pursuit-evasion game. The game is commonly played on graphs~\cite{AIGNER19841,bollobas2013cops,MR4454844,BoNo11,bradshaw2020proof,FRANKL1987301,joret2008cops,luczak2010chasing}, and as a new variant on geodesic spaces~\cite{MR4517712,Mo21,Mo22}. The players in the Cops and Robber game are the robber $r$ and $k$ cops $c_1,\dots,c_k$. On graphs the players occupy vertices while on geodesic spaces the players occupy points in space. The game is played in rounds. The robber chooses initial positions for the players $r^0, c_1^0,\dots,c_k^0$ and in each round of the game the players can move to a new position\footnote{The rules of the Cops and Robber game on graphs we define here are slightly different to standard rules, but they do not affect the outcome of the game on connected graphs.}. Each round of the game has two turns, the first one for the robber and the second one for the cops.
In particular, each of the cops can make a step at the cops' turn. When the game is played on a graph, the players are allowed to move to an adjacent vertex at their turn. When the game is played on a geodesic space $X$, the robber chooses an agility function $\tau: \mathbb{N}\to \mathbb{R_+}$ at the beginning of the game such that $\sum_{n \geq 1} \tau(n)=\infty$. In the $n$-th round, each player makes a step of length at most $\tau(n)$ (at their turn). The position of a player $x$ after round $n$ is denoted as $x^n$. In the following we give the robber the pronoun he, him, while the cops have the pronoun she, her. We say the cops \emph{catch} the robber if at some point in the game the cop $c_i$ occupies the same position as the robber $r$. If the robber is not caught, we say the robber \emph{escapes}.

The \emph{cop number $c(G)$} of a graph $G$ is the minimum number of cops that can catch the robber (regardless of the robber's strategy and initial positions). For a geodesic space $X$ we denote by $c_0(X)$ the \emph{cop catch number}, which is the minimum number of cops that can catch the robber. Further, if the game is played on a geodesic space, we say that the \emph{cops win the game} if 
\begin{align}
\label{eq:copwin}
    \inf_{n,i} d(c_i^n,r^n)=0.
\end{align}
The minimum number of cops that win the game on a geodesic space $X$ is the \emph{cop win number} $c(X)$ of $X$.
Note that in~\cite{Mo21, Mo22, MR4517712} the cop win number is referred as the cop number, and the cop catch number is called the strong cop number. Informally speaking, the cop win number is the minimum number of cops needed to capture the robber when the players are represented by infinitesimally small disks in the metric space.

The Cops and Robber game on geodesic spaces is tightly related to continuous and discrete pursuit-evasion games on metric spaces. In continuous pursuit-evasion games the players make decisions at every point in the time interval $[0,\infty)$. For example, the Lion and Man game as introduced by Rado (see Littlewood's Miscellany~\cite{Li86}) is a two-player game where each player is represented by a point in a metric space, and they can both move with the same maximal speed in any direction. More precisely, the players' paths are 1-Lipschitz maps and a strategy of a player consists of choosing a path $f(g)$ for each of the possible opponents paths $g$ such that if $g$ and $g'$ agree on a time interval $[0,t]$, then $f(g)$ and $f(g')$ agree on $[0,t]$. The man is trying to evade the capture by the lion, while the lion is aiming to occupy the same point as the man. Besicovitch showed that the man can escape the lion when the game is played on a disk \cite{Li86}. Croft studied a variation of this game with multiple pursuers on higher dimensional balls~\cite{Croft64} and Satimov and Kushkarov studied the game on the sphere~\cite{SaKu00}. However, Bollobás, Leader and Walters~\cite{BoLeWa12} showed that in some geodesic spaces, the lion and man game is not well-defined, in the sense that both the lion and the man have a winning strategy.

The Discrete Lion and Man game is the Cops and Robber game where the agility function is constant, i.e. $\tau \equiv K$ for some constant $K$. It was shown that in the Discrete Lion and Man game the lion can catch the man on a disk, more generally, the lion can catch the man on any compact $\cat(0)$-space~\cite{beveridge2015two,MR3915474}.
While the Cops and Robber game is a discrete pursuit-evasion game, it captures the properties of the continuous game, in the sense that in the Cops and Robber game one cop cannot catch the robber when the game is played on a disk~\cite{MR4517712}. Further, the Cops and Robber game is well defined, as Mohar showed that either the cops or the robber have a winning strategy~\cite{Mo21}.

One of the first results about cop numbers given by Aigner and Fromme states that planar graphs have cop number at most $3$~\cite{AIGNER19841}. 

\begin{theorem}[\cite{AIGNER19841}]
\label{thm:copnumberplanar}
    The cop number of every planar graph is at most three.
\end{theorem}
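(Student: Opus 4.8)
The plan is to build everything on one reusable tool, the \emph{guarding of a geodesic}. Let $P=u_0u_1\cdots u_k$ be a shortest path, so that $d(u_i,u_j)=|i-j|$, and define the map $\sigma\colon V(G)\to V(P)$ by $\sigma(x)=u_{\min(k,\,d(x,u_0))}$. The point of the shortest-path hypothesis is that $\sigma$ is a \emph{retraction}: it fixes $P$ vertexwise, and since $d(\cdot,u_0)$ is $1$-Lipschitz (and clamping at $k$ is non-expansive), adjacent or equal vertices of $G$ are sent to adjacent or equal vertices of $P$. The central step is the \textbf{Guarding Lemma}: one cop can, after finitely many rounds, reach the \emph{shadow} vertex $\sigma(r)$ of the robber and maintain it thereafter. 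Because $\sigma(r)$ moves by at most one edge of $P$ per robber move while the cop may always advance one step along $P$, the cop first catches up to the shadow in finitely many rounds and then mirrors the robber: each time the robber moves from $r$ to $r'$, the cop moves from $\sigma(r)$ to the adjacent or equal vertex $\sigma(r')$. If the robber ever steps onto a vertex $u_i$ of $P$, then $\sigma(u_i)=u_i$, so on her next turn the cop stands on the robber's vertex and captures him. Thus a single cop can guard any geodesic.

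With the Guarding Lemma in hand, fix a planar embedding of $G$; we may assume $G$ is $2$-connected, since a robber confined to a block cannot cross a cut vertex once a cop guards it. The strategy maintains the invariant that at the start of each phase the robber is confined to a region $R$ of the sphere bounded by two guarded geodesics $P_1,P_2$ sharing both endpoints $u,v$, with the third cop free. Planarity enters precisely here: the closed curve $P_1\cup P_2$ is a Jordan curve, so the two guarded boundary paths genuinely trap the robber on one side, a feature that has no analogue in general graphs. The base case is set up by guarding one geodesic between two chosen vertices and then a second geodesic with the same endpoints, selected so that the robber lies in the enclosed region.

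Each phase strictly shrinks $R$. Using the free cop, one guards a new geodesic $Q$ lying inside $R$ and chosen so that $Q$ cuts $R$ into two strictly smaller regions. Once $Q$ is guarded, the robber cannot cross any of the three guarded paths and is therefore confined to one of the two pieces, say $R'$; the boundary of $R'$ is formed by $Q$ together with a sub-path of one of $P_1,P_2$ (recall a sub-path of a geodesic is again a geodesic). The cop guarding the other old path no longer touches $R'$ and is released to become the new free cop, while $Q$ and the relevant sub-path become the two boundary geodesics of the next phase. Since $R'$ contains strictly fewer vertices than $R$ and $G$ is finite, the process terminates: after finitely many phases the robber's region has no interior vertex, and he is caught. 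Hence $c(G)\le 3$.

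The main obstacle is twofold. First, the Guarding Lemma requires care in the set-up phase, during which the cop is not yet on $P$ and must provably overtake the one-step-per-round motion of the shadow $\sigma(r)$. Second, and more seriously, the shrinking step needs a careful choice of the cutting geodesic $Q$: one must show that inside any nontrivial region $R$ there is always a geodesic whose endpoints allow the new region $R'$ to be bounded again by \emph{two} geodesics with \emph{two} common endpoints, so that exactly one previously committed cop is freed and the two-boundary invariant is restored. Handling the migration of the endpoints $u,v$, interior vertices that lie on no current boundary geodesic, and degenerate cases (trivial lenses, shared or adjacent vertices) is where essentially all of the difficulty of the argument resides.
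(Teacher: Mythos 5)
Your outline follows the classical Aigner--Fromme route, which the paper itself does not reprove (Theorem~\ref{thm:copnumberplanar} is simply cited from \cite{AIGNER19841}); your Guarding Lemma, with the shadow retraction $\sigma(x)=u_{\min(k,\,d(x,u_0))}$, is exactly the paper's Isometric Path Lemma (Lemma~\ref{lem:guardingpath}), and that part is sound --- the catch-up phase you flag is fine, e.g.\ via the potential $g_t+(k-s_t)$, where $g_t$ is the cop--shadow gap along the path and $s_t$ the shadow's index, since the shadow is confined to a path of finite length. The genuine gap is the phase step, which is where essentially all of the theorem's content sits and which you assert rather than prove. Your invariant --- two guarded geodesics sharing \emph{both} endpoints $u,v$, plus one free cop --- can stall: it can happen that every shortest $u$--$v$ path inside $R$ coincides with $P_1$ or $P_2$ (arrange all interior routes from $u$ to $v$ to be strictly longer than $d(u,v)$), so no interior cutting geodesic with endpoints $u,v$ exists. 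You must then migrate to new endpoints $u',v'$; if $u'$ is interior to $P_1$ and $v'$ interior to $P_2$, the piece of $R$ containing the robber is bounded by $Q$ together with subpaths of \emph{both} old geodesics --- three guarded paths, so no cop is released and your accounting ``the cop guarding the other old path is freed'' fails. Repairing this is precisely the case analysis of Aigner and Fromme (their territories are bounded by at most two guarded paths, but maintaining that requires choosing $Q$ and its endpoints from the structure of the robber's territory, not relative to a fixed pair $u,v$); your closing paragraph concedes the difficulty without resolving it, and as written the induction does not go through.

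A second, smaller but real mismatch: your Guarding Lemma is proved for paths isometric in $G$, whereas the cutting path $Q$ can only be guaranteed shortest \emph{within the current territory} $R$ --- an interior $u'$--$v'$ path may be far from shortest in $G$ because of shortcuts outside $R$. The lemma does apply, but only after relativization: define the shadow via the distance $d_R$ of the territory subgraph (boundary included), and use that the robber, being confined to $R$ on pain of capture, changes $d_R(r,u_0)$ by at most $1$ per move. Subpaths of the old boundary cause no trouble, since a path isometric in $G$ is isometric in every subgraph containing it; but for $Q$ itself, without this relativization the retraction $\sigma$ need not be well defined as a $1$-Lipschitz map on the robber's actual configuration space, and the guarding claim invoked in each phase is not the lemma you proved.
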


An important tool for determining upper bounds for cop numbers is the Isometric Path Lemma, which we use to prove our main theorem and to show that the cop catch number for special surfaces of genus $g\geq 2$ is at most $6$ (see Theorem~\ref{thm:catch}).

\begin{lemma}[\cite{AIGNER19841,Mo22}]
\label{lem:guardingpath}
   Let $I$ be an isometric path starting at $A$ and ending at $B$. Then one cop $c$ can guard $I$ after spending time equal to the length of $I$ on the path to adjust himself.
\end{lemma}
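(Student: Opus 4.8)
The plan is to have the cop track the \emph{shadow} (an orthogonal-type projection) of the robber onto $I$, and to argue in two phases: an \emph{adjustment} phase in which the cop moves along $I$ and catches up to the shadow, and a \emph{maintenance} phase in which she stays on the shadow forever. Parametrize $I$ by arc length as $I\colon[0,L]\to X$ with $I(0)=A$ and $I(L)=B$, where $L$ is the length of $I$, and for a point $x\in X$ define the shadow parameter
\[
\pi(x)=\tfrac12\bigl(L+d(A,x)-d(B,x)\bigr).
\]
Since $d(A,\cdot)$ and $d(B,\cdot)$ are $1$-Lipschitz, $\pi$ is $1$-Lipschitz; and since $I$ is isometric we have $d(A,B)=L$, so the triangle inequality gives $\pi(x)\in[0,L]$, making $I(\pi(x))$ a well-defined point of $I$. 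The essential use of the isometry hypothesis is the identity $\pi(I(t))=t$: indeed $d(A,I(t))=t$ and $d(B,I(t))=L-t$, so a point already on $I$ is its own shadow.

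First I would treat the maintenance phase, which is the easy direction. Suppose that at the end of some round the cop sits at $I(\pi(r))$, where $r$ is the current robber position. If in the next round the robber moves to $r'$ with $d(r,r')\le\tau$, then $\lvert\pi(r')-\pi(r)\rvert\le\tau$ by the Lipschitz property, so the cop can move along $I$ from $I(\pi(r))$ to $I(\pi(r'))$ in a single step of length at most $\tau$ and re-establish the invariant. This guards $I$: whenever the robber steps onto $I$, say to $r'=I(t)$, the identity $\pi(I(t))=t$ forces the cop to land on $I(t)=r'$ in the same round, catching him (and in the cop-win sense, matching the robber to within any $\varepsilon$).

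The work is in the adjustment phase: establishing the invariant within time equal to the length of $I$. I would first send the cop to the endpoint $A=I(0)$; from there the shadow always lies ahead of her on $I$, which removes any ambiguity of direction. The cop then repeatedly advances toward $B$, each round moving forward by $\min(\tau(n),\pi(r^n)-c(n-1))$, where $c(n)$ denotes her parameter after round $n$, so that $c(n)\le\pi(r^n)$ is maintained. Writing $g(n)=\pi(r^n)-c(n)\ge0$ and $T(n)=\sum_{m\le n}\tau(m)$ for the elapsed time, a telescoping computation shows that, as long as the shadow has not been caught, $g(n)=\pi(r^n)-T(n)$, because each forward step of size $\tau(n)$ cancels against the at-most-$\tau(n)$ advance of the shadow while the surplus is recorded by $T$. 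Since $\pi(r^n)\le L$, we get $g(n)\le L-T(n)$, so $g(n)=0$ as soon as $T(n)\ge L$; that is, the cop locks onto the shadow after spending time at most $L$ on $I$, after which the maintenance phase takes over.

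The main obstacle is precisely this quantitative catch-up bound. The naive strategy of always stepping toward the current shadow only shows that the gap $\lvert c(n)-\pi(r^n)\rvert$ is non-increasing, which is not enough, since a robber who keeps his shadow fleeing at full speed could in principle prevent the gap from ever closing. The two ingredients that defeat this are (i) starting at an endpoint, so the chase is one-directional and the cop never has to reverse, and (ii) the confinement $\pi(r^n)\in[0,L]$, which caps how far the shadow can flee and thereby converts the telescoping identity into the clean bound $g(n)\le L-T(n)$. The graph version from~\cite{AIGNER19841} is the same argument with the arc-length parameter replaced by integer distance along the path and steps of length one.
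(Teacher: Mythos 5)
Your proposal is correct and matches the paper's approach: the paper's (one-line) proof idea is exactly a shadow-following strategy, with the cop occupying the point of $I$ at distance $\min\{d(A,r),\ell(I)\}$ from $A$ (``same distance to $A$ as the robber, waiting at $B$ if he is too far''), which is a minor variant of your balanced projection $\pi(x)=\tfrac12\bigl(L+d(A,x)-d(B,x)\bigr)$ --- both are $1$-Lipschitz maps onto $I$ fixing $I$ pointwise, which is all the argument uses. Your two-phase analysis with the telescoping bound $g(n)\le L-T(n)$ supplies the quantitative adjustment estimate that the paper leaves implicit.
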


The idea of the lemma is that the cop $c$ stays at the same distance to $A$ as the robber, unless the robber is too far away, in which case $c$ waits at $B$.

For graphs embeddable in a surface of genus $g$ it is known that the cop number is at most linear in $g$ and recent progress was made on improving the linear factor, see~\cite{bowler2019bounding,erde2021improved,schroder2001copnumber}. It is an outstanding open problem to determine which graphs embeddable in a surface of genus $g$ have largest cop number. There are graphs of genus $g$ with cop number at least $g^{\frac{1}{2}-o(1)}$; one such example are binomial random graphs ${\mathcal G}(n,p)$ with $p=\frac{2\log n}{n}$~\cite{bollobas2013cops,mohar2017notes}. The gap between the upper and lower bound is large and it is conjectured that the lower bound gives the right order of the cop number.

\begin{conjecture}[\cite{mohar2017notes,Mo22}]
\label{conj:Mohar}
   Let $S$ be a a graph of genus $g$. Then $c(S)=O(\sqrt{g})$.
\end{conjecture}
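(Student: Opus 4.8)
The plan is to attempt the conjectured $O(\sqrt g)$ bound by a recursive separator strategy that repeatedly confines the robber to a topologically simpler region, using the Isometric Path Lemma (Lemma~\ref{lem:guardingpath}) as the basic guarding primitive. First I would fix an embedding of $S$ in an orientable surface $\Sigma$ of genus $g$ and, following the covering-space philosophy of this paper, equip $\Sigma$ with a metric of constant negative curvature, so that by Gauss--Bonnet its area is $\Theta(g)$; one then plays the associated geodesic game on a fine triangulation, granting that the transfer between the combinatorial cop number of $S$ and the cop catch number of the geometric model is itself a point that must be controlled. The guiding idea is that a family of isometric paths whose guarded union topologically separates $\Sigma$ lets the cops decide, after finitely many rounds, which component contains the robber and then play the remainder of the game inside that component alone.

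Concretely, I would look for isometric paths $I_1,\dots,I_t$ such that deleting their guarded union disconnects $\Sigma$ into pieces of genus at most $g/2$. Assigning one cop to each $I_j$ via Lemma~\ref{lem:guardingpath} traps the robber in a single piece $\Sigma'$, which carries an induced embedding of smaller genus and inherits the guarded arcs as boundary; the game then restricts to $\Sigma'$. Recursing and terminating at a planar (genus $0$) region, where three cops suffice by Theorem~\ref{thm:copnumberplanar}, gives a total cop count governed by the recursion $T(g) \le T(g/2) + t + O(1)$. If one could always take $t = O(\sqrt g)$, the geometric decay of the genus across levels makes the sum telescope to $T(g) = O(\sqrt g)$, exactly as in planar separator recursions.

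The hard part --- indeed essentially the entire content of the conjecture --- is the separator step. One needs isometric paths that are simultaneously topologically effective (their guarded union reduces the genus) and combinatorially cheap (few of them, each guardable by one cop), and these demands are in tension. A single simple closed curve can split $\Sigma$ into two balanced pieces of genus $\approx g/2$, but such a curve may be combinatorially long, so cutting it into isometric arcs to guard it could itself cost many cops; conversely, cutting along a canonical system of $2g$ short generating loops of $\pi_1(\Sigma)$ reduces $\Sigma$ to a disk but uses $\Theta(g)$ cops, recovering only the known linear bound. Bridging these extremes to a sublinear $O(\sqrt g)$ separator --- one whose cost is governed by the genus-complexity rather than by a generating set --- is exactly what no present guarding lemma provides. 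The lower-bound examples, the binomial random graphs ${\mathcal G}(n,p)$ with $p=\tfrac{2\log n}{n}$, whose genus is $\Theta(n\log n)$ and cop number $\tilde\Theta(\sqrt n)$, suggest that the correct separator is dictated by an isoperimetric or expansion quantity, with trapping forcing the cops to guard a combinatorial sphere whose size scales like $\sqrt g$.

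Finally, I would emphasize the gap between the continuous and discrete settings that this obstacle reflects. The main theorem of this paper shows that on a constant-curvature surface two cops can \emph{win}, that is, drive $\inf_{n,i} d(c_i^n,r^n)$ to $0$, regardless of genus, so the purely geometric difficulty lies entirely in exact \emph{capture}. Converting such a win into a discrete catch while keeping the cop count at $O(\sqrt g)$ --- rather than the genus-independent constant available for winning --- is precisely where the proposed program stalls, and this is the open content of the conjecture.
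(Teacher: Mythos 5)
The statement you were asked about is Conjecture~\ref{conj:Mohar}: it is an open problem, the paper contains no proof of it, and your write-up is accordingly not a proof but a program sketch that you yourself correctly identify as stalling at the separator step. That self-assessment is accurate, and it matches the paper's framing: the known upper bound is $O(g)$, the known lower bound is $g^{\frac{1}{2}-o(1)}$ from ${\mathcal G}(n,p)$, and closing the gap is exactly the conjecture. The missing idea in your plan is the quantitative one you name: a topologically effective separator (one whose removal halves the genus) that can be guarded by $O(\sqrt{g})$ cops via Lemma~\ref{lem:guardingpath}. No such guarding lemma exists, and the two extremes you describe (one balanced separating curve of uncontrolled length versus $\Theta(g)$ short generating loops) really do bracket the difficulty; with $t=\Theta(g)$ at the top level your recursion $T(g)\le T(g/2)+t+O(1)$ only reproduces the known linear bound.

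Beyond that central gap, two steps of your plan would fail as stated even granting a good separator. First, the metrization step transfers in the wrong direction: the paper's geometric results give $c(S)\ge c(G)$ for a surface built from a graph $G$ (lower bounds on surfaces from graphs), and Theorem~\ref{thm:hyperbolicmanifold} concerns the cop \emph{win} number of the geodesic game, where winning means $\inf_{n,i} d(c_i^n,r^n)=0$ rather than capture; neither statement lets you convert a bound for the constant-curvature geodesic game into a bound on the combinatorial cop number of the embedded graph, so equipping $\Sigma$ with a hyperbolic metric buys you nothing for $c(S)$. Second, your recursion quietly assumes that the guarded separator paths remain isometric after the robber is confined to a piece $\Sigma'$: Lemma~\ref{lem:guardingpath} requires isometry in the space where the game is played, and a path isometric in $S$ need not be isometric (or even shortest) within the robber's territory after cutting. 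In the graph literature this is handled by re-choosing shortest paths inside the current territory at each level, which is compatible with your scheme but must be said; as written, the induction hypothesis does not carry the guarding guarantee down to $\Sigma'$. None of this is a criticism of your final two paragraphs, which correctly locate the open content --- the honest conclusion is that your proposal reduces Conjecture~\ref{conj:Mohar} to a sublinear guardable-separator statement that is itself unproved and essentially equivalent to the conjecture.
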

Conjecture~\ref{conj:Mohar} resembles the famous Meyniel's conjecture. 

\begin{conjecture}[\cite{FRANKL1987301}]
\label{conj:Meyniel}
   Let $G$ be a a graph of order $n$. Then $c(G)=O(\sqrt{n})$.
\end{conjecture}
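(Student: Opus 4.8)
The final statement is \emph{Meyniel's conjecture}, one of the central open problems of the subject; no proof is known, so what follows is a plan of attack together with an honest account of why it has resisted resolution for over three decades. The best bounds to date fall far short of $\sqrt{n}$: Frankl~\cite{FRANKL1987301} obtained $c(G)=O\!\left(n\,\tfrac{\log\log n}{\log n}\right)$, and the current record, due independently to Lu--Peng, Scott--Sudakov, and Frieze--Krivelevich--Loh, is only $c(G)\le n\,2^{-(1-o(1))\sqrt{\log_2 n}}=n^{1-o(1)}$. Any honest proposal must therefore explain both the mechanism that yields sublinear bounds and the precise point at which that mechanism stalls.

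The approach I would take is the \emph{greedy covering} (or probabilistic domination) method, built on top of a guarding primitive such as the Isometric Path Lemma (Lemma~\ref{lem:guardingpath}). The plan is: (i) track the robber's current \emph{safe region} $R\subseteq V(G)$, initially all of $V(G)$; (ii) show that by deploying a batch of cops on a cleverly chosen set of vertices, each guarding a geodesic or dominating a ball, the cops can force the robber into a strictly smaller safe region $R'$ with $|R'|\le \alpha|R|$ for some reduction factor $\alpha<1$; and (iii) iterate, so that the total cop count is a geometric-type sum governed by $\alpha$ and by how many cops each round costs. If one could guarantee, in \emph{every} graph, a reduction by a polynomial factor at a cost of $O(\sqrt{|R|})$ cops per round, the series would telescope to the desired $O(\sqrt{n})$ bound.

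The hard part---and the reason the conjecture remains open---is step (ii): in a worst-case graph the expansion one can certify is far too weak. The known arguments shrink the safe region only by a factor of roughly $2^{-\sqrt{\log n}}$ per unit of cop budget, which is exactly what produces the $n\,2^{-\sqrt{\log n}}$ barrier; promoting this to a polynomial reduction would demand structural control that no general graph supplies. Compounding the difficulty, the conjecture is \emph{tight}: there are algebraically defined graphs (arising from incidence geometries, e.g.\ generalized polygons and the constructions of Pra\l{}at--Wormald and Bukh--Cox) with $c(G)=\Theta(\sqrt{n})$, so a successful proof must match these extremal examples rather than merely beat a crude bound. A natural intermediate target, and the step I would attack first, is the class of graphs of small diameter (say diameter $2$), where the guarding primitives of Lemma~\ref{lem:guardingpath} are sharpest and the cop number is most directly governed by domination-type parameters; controlling the worst case even there is already the crux of the obstruction.
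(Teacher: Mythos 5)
The statement you were given is Meyniel's conjecture, which the paper itself presents only as a long-standing open problem (Conjecture~\ref{conj:Meyniel}) with no proof, noting exactly the bounds you cite --- Frankl's $O\bigl(n\log\log n/\log n\bigr)$ and the $n\,2^{-(1-o(1))\sqrt{\log_2 n}}$ record of Lu--Peng, Scott--Sudakov, and Frieze--Krivelevich--Loh --- and that even the Weak Meyniel conjecture remains open. You were right not to claim a proof: your account of the state of the art agrees with the paper's discussion, and since no proof exists there is nothing further to compare.
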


This is a long-standing conjecture and the best upper bound is that the cop number of an $n$-vertex graph is at most $O\left(\frac{n}{2^{(1-o(1))\sqrt{\log^2(n)}}}\right)$, see~\cite{FrKrLo12, LuPe12, ScSu11}. Hence even a weaker version of the conjecture, called the Weak Meyniel's conjecture, which states that there exists some $\varepsilon>0$ such that $c(G)\leq O(n^{1-\varepsilon})$, is still open. Conjecture~\ref{conj:Mohar} for graphs of genus $g$ implies that Meyniel's conjecture holds for graphs with $O(n)$ edges as their genus is $O(n)$, which includes for example subcubic graphs. Hosseini, Mohar and Gonzalez Hermosillo de la Maza~\cite{HoMoGo21} showed that Meyniel's conjecture for subcubic graphs implies the Weak Meyniel's conjecture (with exponent $\frac{3}{4}$), thus
Conjecture~\ref{conj:Mohar} implies the Weak Meyniel's conjecture.

As for graphs, the cop win number for a surface of genus $g$ is at most linear in $g$~\cite{Mo21}. It was shown that for graphs of cop number at least $3$ there exists a surface $S$ of genus $g$ with $c(S)\geq c(G)$~\cite{Mo21}. Therefore there are compact surfaces of genus $g$ with cop win number at least $g^{\frac{1}{2}-o(1)}$. The following conjecture is a tough conjecture since it implies Conjecture~\ref{conj:Mohar}.

\begin{conjecture}[\cite{Mo22}]
\label{conj:Mohar2}
   Let $S$ be a geodesic surface of genus $g$. Then $c(S)=O(\sqrt{g})$.
\end{conjecture}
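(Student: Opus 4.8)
The plan is to exploit that the cop win number $c(S)$ only requires driving the infimum distance to $0$ (condition \eqref{eq:copwin}), not an actual capture, and to \emph{herd} the robber into a nested sequence of regions of shrinking diameter using $O(\sqrt{g})$ cops. Because we only need \eqref{eq:copwin}, there is no base case in the usual sense: it suffices to confine the robber to regions $R_0 \supset R_1 \supset \cdots$ with $\diam(R_i) \to 0$ while keeping finitely many cops active. The engine of confinement is Lemma~\ref{lem:guardingpath}: one cop permanently guards an isometric geodesic arc, and a wall assembled from finitely many such arcs becomes impassable once the cops have spent the finite setup time to adjust.

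First I would fix a collection of simple closed geodesics that separates $S$ in a \emph{balanced} way. Concretely, I would seek a separating system $\Gamma_1$ of simple closed geodesics that splits $S$ into pieces each of genus at most $g/2$, with $|\Gamma_1| = O(\sqrt{g})$, chosen so that each curve of $\Gamma_1$ decomposes into $O(1)$ isometric arcs (short pants or systolic curves, or geodesics realizing a topological separator). By Lemma~\ref{lem:guardingpath}, $O(\sqrt{g})$ cops then guard all of $\Gamma_1$; once set up, this wall confines the robber to a single piece $R_1$ of genus at most $g/2$. I would then recurse inside $R_1$ with a separating system $\Gamma_2$ of size $O(\sqrt{g/2})$, and so on, following the root-to-leaf path of the decomposition tree determined by where the robber actually goes.

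The accounting is the crux of why $\sqrt{g}$ is the natural target. Along one such path the genus at least halves at each step, so the number of cops held simultaneously is at most
\[
\sum_{i \ge 0} O\!\left(\sqrt{g/2^{\,i}}\right) = O(\sqrt{g})\sum_{i \ge 0} 2^{-i/2} = O(\sqrt{g}),
\]
a convergent geometric series, even though the guards of $\Gamma_1,\dots,\Gamma_i$ all remain in place. After $O(\log g)$ levels the robber is trapped in a planar region with guarded geodesic boundary; there I would run a geodesic analogue of the Aigner--Fromme strategy behind Theorem~\ref{thm:copnumberplanar}, but in refinement mode, so that $O(1)$ additional cops repeatedly cut the planar region along guarded shortest geodesics into ever smaller subregions, forcing $\diam(R_i)\to 0$ and hence \eqref{eq:copwin}, while the total budget stays $O(\sqrt{g})$.

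The hard part is the existence of balanced geodesic separators with \emph{both} properties used above: size $O(\sqrt{g})$ \emph{and} decomposability into $O(1)$ guardable isometric arcs each. For graphs the balanced-separator count $O(\sqrt{g})$ is classical, but transporting it to the metric setting requires realizing the separator by short simple closed geodesics, which variable (non-constant) curvature does not guarantee: long thin handles can force long separating geodesics, and a long geodesic needs many arcs, inflating the per-curve cop cost. This is precisely where the constant-curvature covering-space method of this paper does not transfer, since without exponential divergence of geodesics one cannot control a curve with $O(1)$ cops for free. A secondary obstacle is continuous confinement: one must verify that a guarded union of isometric arcs is a genuine topological barrier that the robber cannot slip through, that he cannot exploit the agility function to leave a region before the wall is complete, and that the finite setup delays of Lemma~\ref{lem:guardingpath} do not compound across the $O(\log g)$ recursion levels. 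This bookkeeping, rather than the geometric-series count, is what the whole argument ultimately rests on.
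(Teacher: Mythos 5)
The statement you are trying to prove is not a theorem of this paper: it is Conjecture~\ref{conj:Mohar2}, explicitly posed as open (the paper calls it ``a tough conjecture since it implies Conjecture~\ref{conj:Mohar}'' and lists it among the main open questions in the concluding remarks). The paper's contribution is to settle a special case---constant curvature---by a completely different mechanism: two cops suffice on any compact hyperbolic manifold, via the covering-space strategy of Theorem~\ref{thm:mainthm}, not via separators. So your proposal cannot be ``essentially the paper's proof''; it is an attempt at the open problem, and it has a genuine gap, which to your credit you partially flag yourself.

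The gap is fatal at the first step: balanced separating systems of geodesics with total \emph{guardable} cost $O(\sqrt{g})$ do not exist on general geodesic surfaces. The cop cost of a wall is the number of isometric arcs it decomposes into, and an isometric arc has length at most $\diam(S)$; so the cost of a separating system is at least its total length divided by $\diam(S)$. On surfaces with a Cheeger constant bounded below---e.g.\ random hyperbolic surfaces, or arithmetic (Buser--Sarnak-type) families---any system of curves splitting the surface into two pieces of comparable area has total length $\Omega(g)$, while $\diam(S) = O(\log g)$, forcing $\Omega(g/\log g)$ cops for a \emph{single} balanced cut. This destroys both the $O(\sqrt{g})$ budget at level one and the geometric series; it is precisely why the known upper bound is $O(g)$ and why the conjecture is hard. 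Your appeal to the ``classical'' graph separator count is also miscited: the Gilbert--Hutchinson--Tarjan-type theorems give balanced separators of size $O(\sqrt{gn})$, depending on the number of vertices $n$, not $O(\sqrt{g})$; there is no purely genus-dependent separator bound to transport. Note also that such a bound would be too strong: since planar regions need $3$ cops in the Aigner--Fromme endgame rather than enabling unbounded refinement for free, and since the paper's hyperbolic result bypasses separators entirely (two cops, independent of $g$, using exponential divergence of geodesics in the universal cover), the evidence is that any proof of Conjecture~\ref{conj:Mohar2} will need an idea beyond divide-and-conquer by guarded walls. Your own closing paragraph correctly identifies this; but identifying the missing lemma is not the same as having a proof, and here the missing lemma is false as stated.
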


The main result of this paper shows that the upper bound from Conjecture~\ref{conj:Mohar2} can be significantly improved for surfaces of constant curvature. The bound is independent of the genus, which we find surprising. Surfaces of constant curvature are either spherical, Euclidean or hyperbolic. It follows from~\cite{MR4517712} that spherical and Euclidean surfaces, which are surfaces of genus $0$ and $1$, have cop win number $2$ and cop catch number $3$. Here we settle the case of hyperbolic surfaces which can have arbitrarily large genus, and extend it to higher-dimensional hyperbolic manifolds. The main result of the paper thus reads as follows.

\begin{theorem}
\label{thm:hyperbolicmanifold}
   If $M$ is a compact hyperbolic manifold, then $c(M)=2$.
\end{theorem}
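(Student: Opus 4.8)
The plan is to prove the two inequalities $c(M)\le 2$ and $c(M)\ge 2$ separately, with essentially all of the effort going into the upper bound via a covering space argument. For the lower bound I would use that $M$, being a manifold, contains a small metric ball $B$ isometric to a ball in $\mathbb{H}^n$, hence convex and $\cat(0)$. Confining the robber to $B$ reduces the one‑cop game to the classical Lion and Man situation on a disk; since the robber chooses the agility function $\tau$ (with $\sum_n\tau(n)=\infty$), he may pick a suitably decreasing sequence and run a Besicovitch‑type evasion keeping $\inf_n d(c^n,r^n)>0$. Because the nearest‑point projection onto the convex ball is $1$‑Lipschitz in a $\cat(0)$ space, a single cop outside $B$ is never more dangerous than one on $\partial B$, so this restriction is legitimate and gives $c(M)\ge 2$.

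For the upper bound I would apply the covering space method. Let $\pi\colon\mathbb{H}^n\to M$ be the universal covering, a local isometry, and fix a lift $\tilde r^{\,0}$ of the robber's start; the robber's trajectory (continuous within each step) then lifts uniquely to a trajectory $\tilde r^{\,n}$ in $\mathbb{H}^n$ whose steps have the same lengths $\le\tau(n)$. Since $\pi$ is distance non‑increasing, it suffices to produce two cops in $\mathbb{H}^n$, with the same step bounds, that come arbitrarily close to the single lift $\tilde r$: a cop that is $\varepsilon$‑close to $\tilde r^{\,n}$ projects to a cop $\varepsilon$‑close to $r^n$ on $M$. In this way the genus, equivalently the deck group $\Gamma$, disappears entirely and the whole problem becomes a pursuit of one fugitive point in $\mathbb{H}^n$.

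The engine of the cop strategy is a strengthening of the Isometric Path Lemma (Lemma~\ref{lem:guardingpath}) to a complete geodesic $\ell\subset\mathbb{H}^n$. A cop guards $\ell$ by tracking the nearest‑point projection $P_\ell(\tilde r^{\,n})$: in a $\cat(0)$ space $P_\ell$ is $1$‑Lipschitz, so once the cop sits at the projection she stays there and $\tilde r$ can never cross $\ell$ without being caught. The delicate point, and the place where negative curvature rather than mere non‑positive curvature is essential, is completing the initial adjustment, i.e.\ catching up to a moving target on an infinite line. Here I would use that in $\cat(-1)$ the projection onto $\ell$ is a strict contraction off $\ell$: if $\tilde r$ lies at distance $\rho>0$ from $\ell$, a displacement of $\tilde r$ moves $P_\ell(\tilde r)$ by roughly the factor $1/\cosh\rho<1$. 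Thus the robber can push his projection along $\ell$ at full speed only by hugging $\ell$, hence by approaching the guarding cop, so the along‑line gap steadily closes in finite time. With this tool one cop confines $\tilde r$ to one side of a guarded geodesic; the second cop then guards a further geodesic cutting off a smaller region, and by moving the two guarded geodesics inward the cops trap $\tilde r$ in a lune/wedge whose inradius tends to $0$. The exponential divergence of geodesics is exactly what forces this region to shrink to a point (in contrast to the Euclidean case, where a wedge between two lines stays infinitely wide), so one cop eventually comes within any prescribed $\varepsilon$. In dimension $n>2$ the lines are replaced by totally geodesic hyperplanes, whose projections obey the same $1$‑Lipschitz/strict‑contraction dichotomy, and the same squeeze applies.

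The main obstacle I anticipate is precisely the non‑compactness of the cover: the robber's lift may run off to infinity in $\mathbb{H}^n$ by winding around handles of $M$, so the finite‑length Isometric Path Lemma cannot guard an infinite geodesic directly. The resolution is the strict‑contraction estimate above together with the trade‑off it creates, namely that speeding up one's projection requires approaching the guard, plus careful bookkeeping of the discrete, adversarially chosen agility function: because $\sum_n\tau(n)=\infty$ the cops always retain enough cumulative budget to finish each adjustment phase, while compactness of $M$ (bounded diameter and positive injectivity radius) supplies the uniform geometric constants needed to make the squeeze quantitative.
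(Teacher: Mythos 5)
Both halves of your proposal contain genuine gaps. For the lower bound, Besicovitch-type evasion proves the wrong thing: on a disk the man avoids \emph{capture}, but the lion's distance to him tends to $0$, so confining the robber to a small convex ball $B$ (a compact $\cat(0)$ space) does not keep $\inf_n d(c^n,r^n)>0$ and hence does not contradict \eqref{eq:copwin} for a single cop; the cop win number of a disk is $1$, even though its cop catch number is larger~\cite{MR4517712}. Your restriction therefore hands the single cop a win rather than proving $c(M)\ge 2$. The paper's lower bound instead exploits that $M$ is closed (boundaryless): by Lemma~\ref{lem:ball-sys} every ball of radius less than a quarter of the systole is isometric to a hyperbolic ball, so whenever the cop is near, the robber steps along the extension of the geodesic from the cop through his position, increasing the distance additively each round --- a move that is always available precisely because there is no boundary, in contrast to the disk.

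For the upper bound, the step ``the deck group $\Gamma$ disappears entirely and the whole problem becomes a pursuit of one fugitive point in $\mathbb{H}^n$'' is where the argument breaks, because that reduced problem is unwinnable: in $\mathbb{H}^n$ the lifted robber simply runs along a geodesic away from both cops, all distances are nondecreasing, and no same-speed cop can ever get \emph{ahead} of $\tilde r$ to force a crossing of a guarded geodesic. Your fallback squeeze also fails on its own terms: the region between two disjoint geodesics in $\mathbb{H}^2$ (or hyperplanes in $\mathbb{H}^n$) widens exponentially toward the ideal boundary, so its inradius is unbounded --- exponential divergence works against the cops here --- and in any case holding one barrier while advancing another is the three-cop Aigner--Fromme mechanism, not something two cops can do. The deck group is exactly what the paper's proof keeps and uses: since $M$ is compact with diameter $D$, the position of cop $c_2$ has a lift within distance $D$ of \emph{every} point of the cover, so at the start of each phase she re-chooses her lift to sit, by Claim~\ref{cl:intersectionisfar}, within $D$ of the point $P$ lying $10D$ ahead of the robber on the geodesic through $C_1^{t_i}$ and $R^{t_i}$, and then guards only the \emph{finite} segment $\overline{BB'}$ of length $16D$ via Lemma~\ref{lem:guardingpath}; a finite segment suffices because each phase consumes only $32D$ of agility. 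The robber's forced detour around $B$ or $B'$ then lets $c_1$ gain a uniform $\delta=\eta\varepsilon$ per phase (Lemma~\ref{prop:epsilon}, proved by the hyperbolic Pythagorean theorem and Lemma~\ref{lem:mean-value}). So compactness enters not merely through ``uniform constants'' but through lift re-selection, the mechanism your reduction discards; without it your strategy has no way to begin.
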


Definitions and technical lemmas needed in the proofs are presented in Section~\ref{sec:preliminaries}.
The proof of the 2-dimensional version of Theorem~\ref{thm:hyperbolicmanifold} is given in Section \ref{sec:proof}.
In the proof we describe a novel strategy based on the covering space technique. One cop follows the robber while the other one assures that the robber needs to diverge from its route, enabling the first cop to take shortcuts.
The higher-dimensional version of Theorem~\ref{thm:hyperbolicmanifold}  is presented in Section~\ref{sec:generalizations}. 
Finally, the cop catch number is discussed in Section~\ref{sec:catch}.

\section{Preliminaries}
\label{sec:preliminaries}
In this section we present definitions and preliminary lemmas that we need to prove our results. We start with basic definitions which are needed for the Cops and Robber game.

Let $(X,d)$ be a metric space. For $x,y\in X$, an \emph{$(x,y)$-path} is a continuous map $\gamma: I\to X$ where $I=[0,1]$ is the unit interval on $\mathbb{R}$, $\gamma(0)=x$ and $\gamma(1)=y$. One can define the \emph{length} $\ell(\gamma)$ of the path $\gamma$ by taking the supremum over all finite sequences $0=t_0<t_1<t_2< \cdots < t_n=1$ of the values $\sum_{i=1}^n d(\gamma(t_{i-1}),\gamma(t_i))$. Clearly, the length of every $(x,y)$-path is at least $d(x,y)$.  The metric space $X$ is a \emph{geodesic space} if for every $x,y\in X$ there is an $(x,y)$-path whose length is equal to $d(x,y)$. An $(x,y)$-path $\gamma$ is \emph{isometric} if $\ell(\gamma) = d(x,y)$. Observe that for $0\le t < t' \le 1$ the subpath $\gamma|_{[t,t']}$ is also isometric. Therefore the set $\gamma(I) = \{\gamma(t)\mid t\in I\}$ is an isometric subset of $X$. With a slight abuse of terminology, we say that the image $\gamma(I)\subset X$ is an \emph{isometric path} in $X$. If there exists a unique isometric path between $x,y$ then we will denote it by $\overline{xy}\subset X$. A continuous map $\gamma: \mathbb{R}\to X$ is a \emph{geodesic} if it is locally isometric, i.e., for every $t\in \mathbb{R}$ there is an $\varepsilon>0$ such that the subpath $\gamma|_J$ on the interval $J = [t-\varepsilon,t+\varepsilon]$ is isometric.

\subsection{Hyperbolic plane and hyperbolic surfaces}
Since we consider the Cops and Robber game played on hyperbolic spaces, we properly define those and give examples for particular hyperbolic surfaces.

The $n$-dimensional hyperbolic space $\mathbb{H}^n$ is the unique $n$-dimensional simply connected complete Riemannian manifold with a constant negative sectional curvature equal to $-1$. Standard examples for hyperbolic space is the half space model and the Poincaré disk model. The $n$-dimensional \emph{Poincaré disk} $\mathcal{D}^n$ is the disk $\{(x_1, \ldots, x_n) \mid x_1^2 + \cdots + x_n^2 < 1\}$ equipped with the hyperbolic metric $4 \frac{dx_1^2 + \cdots + dx_n^2}{(1 - x_1^2 - \cdots - x_n^2)}$. For brevity, we write $\mathcal{D}=\mathcal{D}^2$. 

Well-known examples for surfaces which are not hyperbolic are the sphere and the flat torus. The flat torus arises from a tessellation of the Euclidean plane by translations of a parallelogram. Here, the Euclidean plane is the universal covering space of the standard torus. For a topological space $X$, a \emph{covering space} of $X$ is a topological space $C$ together with a continuous surjective map $p \colon C \to X$
such that for every $x \in X$, there exists an open neighbourhood $U$ of $x$, such that $p^{-1}(U)$ is a union of disjoint open sets in $C$, each of which is mapped homeomorphically onto $U$ by $p$ (i.e.\ it is a local homeomorphism).

Similarly as for the flat torus, hyperbolic surfaces arise from a tessellation of the hyperbolic plane into copies of some polygon (see Figure~\ref{fig:Sg'}), which is formalized by the following theorem.

\begin{theorem}[Killing-Hopf \cite{Hopf1926,Killing1891}]
\label{thm:constantcurvature}
Any compact Riemannian manifold of constant curvature $-1$ is isometric to $\mathbb{H}^n/\Gamma$ where $\Gamma$ is a group of isometries on $\mathbb{H}^n$ acting freely and properly discontinuously. 
\end{theorem}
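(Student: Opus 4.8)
The plan is to realize $M$ as a quotient of its universal cover and to identify that cover isometrically with $\mathbb{H}^n$. First I would observe that a compact Riemannian manifold is geodesically complete (by Hopf--Rinow), so its universal covering $\pi\colon \tilde M\to M$, equipped with the pulled-back metric, is a complete, simply connected Riemannian manifold of constant curvature $-1$. Moreover $\pi$ is a local isometry, and the fundamental group $\Gamma_0 := \pi_1(M)$ acts on $\tilde M$ by deck transformations, which are isometries acting freely and properly discontinuously, with $M\cong \tilde M/\Gamma_0$. Since the sectional curvature is negative, the Cartan--Hadamard theorem applies: for any basepoint $p\in\tilde M$ the exponential map $\exp_p\colon T_p\tilde M\to\tilde M$ is a diffeomorphism. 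In particular $\tilde M$ is diffeomorphic to $\mathbb{R}^n$, and the same holds for $\mathbb{H}^n$ at a basepoint $o$.

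The core of the argument is to upgrade this diffeomorphism to an isometry. Fixing a linear isometry $\iota\colon T_p\tilde M\to T_o\mathbb{H}^n$ between the tangent spaces, I would define the \emph{Cartan map}
\[
  \Phi \;:=\; \exp_o\circ\,\iota\circ\exp_p^{-1}\colon \tilde M\to\mathbb{H}^n,
\]
which is a diffeomorphism by the previous step. The claim is that $\Phi$ is in fact an isometry, and this is where I expect the main difficulty to lie.

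To prove that $\Phi$ preserves the metric, I would compare the two spaces in geodesic polar coordinates, which is exactly where constant curvature enters. The differential of the exponential map along a radial geodesic $\gamma$ emanating from $p$ is computed by Jacobi fields $J$ with $J(0)=0$, satisfying $J'' + R(J,\gamma')\gamma' = 0$. When the sectional curvature is constant equal to $-1$, the normal part of this equation reduces to $J'' - J = 0$, whose solution with $J(0)=0$ is $J(t)=\sinh(t)\,E(t)$ for a parallel unit field $E$; hence $\lvert J(t)\rvert = \sinh(t)$ depends only on $t$ and not on the ambient manifold. Equivalently, in geodesic polar coordinates the metric of any space of constant curvature $-1$ takes the universal form $dr^2 + \sinh^2(r)\,d\theta^2$, where $d\theta^2$ is the round metric on the unit sphere. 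Since $\iota$ is a linear isometry, $\Phi$ carries radial geodesics from $p$ to radial geodesics from $o$ preserving their length, and the sphere-direction comparison of the two identical metric expressions shows $d\Phi$ is a linear isometry at every point. Therefore $\Phi$ is a global isometry of $\tilde M$ onto $\mathbb{H}^n$.

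It remains to transport the deck action across $\Phi$. Setting $\Gamma := \Phi\,\Gamma_0\,\Phi^{-1}$, each element is an isometry of $\mathbb{H}^n$, and $\Gamma$ inherits from $\Gamma_0$ the property of acting freely and properly discontinuously. Conjugation by $\Phi$ then descends to an isometry $M\cong \tilde M/\Gamma_0 \cong \mathbb{H}^n/\Gamma$, which is the desired conclusion. The main obstacle throughout is the Jacobi-field comparison establishing that $\Phi$ is an isometry; once the explicit constant-curvature solutions are in hand this step is the classical heart of the Cartan--Ambrose--Hicks rigidity, and the rest is bookkeeping with covering-space theory.
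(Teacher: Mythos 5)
Your proposal is correct, and it is precisely the classical argument behind the Killing--Hopf theorem: completeness via Hopf--Rinow, Cartan--Hadamard applied to the universal cover, the Cartan map $\exp_o\circ\,\iota\circ\exp_p^{-1}$ upgraded to an isometry by the Jacobi-field computation $J(t)=\sinh(t)E(t)$ (equivalently, the universal polar form $dr^2+\sinh^2(r)\,d\theta^2$ of a constant-curvature $-1$ metric), and finally conjugating the deck transformation group to obtain $M\cong\mathbb{H}^n/\Gamma$. The paper itself gives no proof of this statement --- it is quoted as a classical theorem with citations to Killing and Hopf --- so there is nothing for your argument to diverge from; it matches the standard proof those references embody.
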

Here, the curvature is the sectional curvature. For more information about curvatures, we refer the reader to~\cite{AbTo12}. 

A \emph{fundamental polygon} of a Riemannian surface $\mathbb{H}^2/\Gamma$ is a polygon in $\mathbb{H}^2$ which contains a
representative of each $\Gamma$-orbit and at most one representative of each $\Gamma$-orbit in its interior. We define by \emph{$P(k,\theta)$} the regular $k$-gon in the Poincaré disk $\mathcal{D}$ centred at $O=(0,0)$ with angle $\theta$ at the vertices. We denote its vertices by $v_1, \dots, v_{k}$ in counter-clockwise direction and let $a_i$ be the (oriented) edge from $v_i$ to $v_{i+1}$ and $a_i^{-1}$ be the reversed edge from $v_{i+1}$ to $v_i$ (we consider the indices modulo $k$).
We are going to consider three standard hyperbolic surfaces for $g\geq 2$, where one of them is non-orientable.
\begin{itemize}
    \item Let $S(g)$ be the orientable surface obtained from $P\left(4g,\frac{2\pi}{4g}\right)$ by identifying the (oriented) edges $a_{4i-3}$ with $a_{4i-1}^{-1}$ and $a_{4i-2}$ with $a_{4i}^{-1}$ for $i=1,\dots,g$. The surface $S(2)$ is depicted in Figure~\ref{fig:Sg'}.
    \item Let $S'(g)$
    be the orientable surface obtained from $P\left(4g+2,\frac{2\pi}{2g+1}\right)$ by identifying opposite (oriented) edges $a_{i}, a_{i+2g+1}^{-1}$ for $i=1,\dots, 2g$.
    \item Let $N(g)$ be the non-orientable surface obtained from $P\left(2g,\frac{2\pi}{2g}\right)$ by identifying the (oriented) edge $a_{2i}$ with $a_{2i+1}$ for $i=1,\dots g$.
\end{itemize}
 \begin{figure}[h!]
     \centering
     \resizebox{0.8\textwidth}{!}{
     \raisebox{-0.5\height}{
    \begin{tikzpicture}
    \node[anchor=south west,inner sep=0] at (0,0) {\includegraphics[width=6cm]{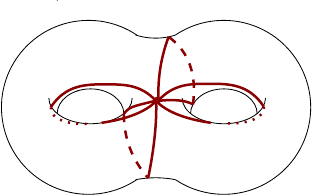}};
    \node at (3.3,3.5) {\large $a_1$};
    \node at (5,2.2) {\large $a_2$};
    \node at (3,0) {\large $a_5$};
    \node at (1,2.2) {\large $a_6$};
    \node at (0,-1) {\large $(a)$};
\end{tikzpicture}
}
     \hspace*{.2in}
     \raisebox{-0.5\height}{
\begin{tikzpicture}[scale=0.8]
    \node[anchor=south west,inner sep=0] at (-1,-1) {\includegraphics[width=6.5cm]{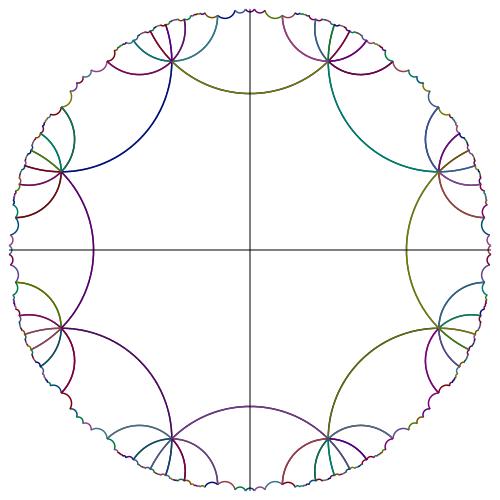}};
    \node at (5.1,5) {\large $a_1$};
    \node at (0.9,5) {\large $a_1^{-1}$};
    \node at (3,6) {\large $a_2$};
    \node at (0,3) {\large $a_2^{-1}$};
    \node at (0.9,1) {\large $a_5$};
    \node at (5.2,1) {\large $a_5^{-1}$};
    \node at (3,0) {\large $a_6$};
    \node at (6,3) {\large $a_6^{-1}$};
    \node at (5.5,4) {\large $v_1$};
    \node at (2,5.5) {\large $v_3$};
    \node at (4,5.5) {\large $v_2$};
    \node at (0.5,4) {\large $v_4$};
 \node at (5.5,2) {\large $v_8$};
    \node at (2,0.5) {\large $v_6$};
    \node at (4,0.5) {\large $v_7$};
    \node at (0.5,2) {\large $v_5$};   
     \node at (-1,0.2) {\large $(b)$};
\end{tikzpicture}
}
}
     \caption{Figure~(a) depicts the surface $S(g)$ and Figure~(b) its fundamental domain $P\left(4g,\frac{2\pi}{4g}\right)$ inside a partial tessellation of the Poincaré disk.
     }
           \label{fig:Sg'}
       \end{figure}
The \emph{diameter} of a compact surface $S$ is the maximum distance between points in $S$, that is $\diam(S) = \max\{d(x, y) \mid x, y \in S\}$. 
The \emph{systolic girth}, denoted as $\sys(S)$, is the length of a shortest noncontractible curve on the surface. A hyperbolic surface is locally isometric to the hyperbolic plane, which we will use frequently in our proofs.

\begin{lemma}
    \label{lem:ball-sys}
    If $S$ is a hyperbolic surface, $a \in S$ and $r < \frac{\sys(S)}{4}$, then the disk $B(a,r)$ is isometric to a hyperbolic disk.
\end{lemma}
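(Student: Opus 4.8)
The plan is to lift the ball to the universal cover $\mathbb{H}^2$ and argue that the covering projection restricts to an isometry on a ball of radius $r$. By the Killing--Hopf Theorem~\ref{thm:constantcurvature} we may write $S = \mathbb{H}^2/\Gamma$, where $\Gamma$ is a group of isometries acting freely and properly discontinuously, and the quotient map $p\colon \mathbb{H}^2 \to S$ is a covering map and a local isometry. Fix a lift $\tilde a \in \mathbb{H}^2$ with $p(\tilde a)=a$. First I would record the elementary facts that $p$ does not increase distances (it sends each path to a path of equal length) and that $p\big(B(\tilde a,r)\big) = B(a,r)$: the inclusion ``$\subseteq$'' follows from $d_S(a,p(\tilde x)) \le d_{\mathbb{H}^2}(\tilde a,\tilde x)$, and ``$\supseteq$'' follows by lifting a short geodesic from $a$ to a point of $B(a,r)$, starting the lift at $\tilde a$.

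The crucial ingredient is the translation between the systolic girth and the displacement of $\tilde a$ under $\Gamma$. Every noncontractible loop based at $a$ lifts to a path from $\tilde a$ to $\gamma \tilde a$ for some $\gamma \in \Gamma \setminus \{\mathrm{id}\}$, and conversely each such $\gamma$ yields a noncontractible loop; hence the shortest noncontractible loop through $a$ has length $\min_{\gamma \ne \mathrm{id}} d_{\mathbb{H}^2}(\tilde a, \gamma \tilde a)$. Since any such loop is in particular a noncontractible closed curve on $S$, this minimum is at least $\sys(S)$, so
\[
    d_{\mathbb{H}^2}(\tilde a, \gamma \tilde a) \ge \sys(S) \qquad \text{for all } \gamma \in \Gamma\setminus\{\mathrm{id}\}.
\]

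With this in hand I would show that $p$ restricted to $B(\tilde a, r)$ is a distance-preserving bijection onto $B(a,r)$; since a metric ball of radius $r$ in $\mathbb{H}^2$ is a hyperbolic disk and $\mathbb{H}^2$ is homogeneous, this proves the claim. Let $x,y \in B(a,r)$ with lifts $\tilde x, \tilde y \in B(\tilde a,r)$. A shortest geodesic in $S$ between $x$ and $y$ lifts to a geodesic from $\tilde x$ to some lift $\gamma \tilde y$ of $y$, so $d_S(x,y) = \min_{\gamma \in \Gamma} d_{\mathbb{H}^2}(\tilde x, \gamma \tilde y)$. For $\gamma = \mathrm{id}$ the triangle inequality gives $d_{\mathbb{H}^2}(\tilde x, \tilde y) < 2r$, while for $\gamma \ne \mathrm{id}$,
\[
    d_{\mathbb{H}^2}(\tilde x, \gamma \tilde y) \ge d_{\mathbb{H}^2}(\tilde a, \gamma \tilde a) - d_{\mathbb{H}^2}(\tilde a, \tilde x) - d_{\mathbb{H}^2}(\gamma \tilde a, \gamma \tilde y) \ge \sys(S) - 2r.
\]
This is exactly where the hypothesis enters: the assumption $r < \tfrac{1}{4}\sys(S)$ yields $\sys(S) - 2r > 2r$, so the minimum above is attained uniquely at $\gamma = \mathrm{id}$ and $d_S(x,y) = d_{\mathbb{H}^2}(\tilde x, \tilde y)$. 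Taking $y=x$ also shows $p$ is injective on the ball.

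I expect the main obstacle to be the bookkeeping around the nonidentity deck transformations: one must confirm that distances in $S$ are genuinely computed by minimizing over all of $\Gamma$, not merely over the sheet containing $\tilde a$, and that the systole bound controls all of them simultaneously. The factor $\tfrac14$ is forced precisely by the two radius-$r$ error terms in the triangle inequality, one at $\tilde a$ and one at $\gamma\tilde a$; a weaker bound such as $r<\tfrac12\sys(S)$ would only guarantee injectivity of $p$ on the ball, not that the intrinsic metric of $B(a,r)$ agrees with the hyperbolic metric.
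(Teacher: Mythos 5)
Your proof is correct, but it takes a genuinely different route from the paper's. The paper works downstairs on $S$: it projects the hyperbolic geodesic $g$ between the lifts of $x,y$ into $S$, and supposes some shortest path $\overline{xy}$ in $S$ differs from this projection; the concatenation is then a closed piecewise-geodesic curve of length less than $4r<\sys(S)$, hence contractible, and the Gauss--Bonnet theorem applied to the region it bounds (curvature $-1$, geodesic sides, two corners) forces that region to have zero area --- i.e.\ no geodesic bigon can exist, so the two paths coincide and distances in $B(a,r)$ agree with hyperbolic distances. You instead work upstairs: writing $S=\mathbb{H}^2/\Gamma$, you combine the displacement bound $d_{\mathbb{H}^2}(\tilde a,\gamma\tilde a)\ge\sys(S)$ for $\gamma\ne\mathrm{id}$ with the quotient-metric formula $d_S(x,y)=\min_{\gamma\in\Gamma}d_{\mathbb{H}^2}(\tilde x,\gamma\tilde y)$, and the hypothesis $r<\tfrac14\sys(S)$ makes the identity term $(<2r)$ strictly beat every nonidentity term $(\ge\sys(S)-2r>2r)$. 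Both proofs use the systole bound through the same numerology ($4r$ versus $\sys(S)$), but yours is purely metric: it never invokes the sign of the curvature beyond identifying the universal cover, so it transfers verbatim to flat and spherical space forms and to higher-dimensional hyperbolic manifolds (which is in fact the setting of the paper's Section~5), and your closing remark correctly isolates why $\tfrac14$ rather than $\tfrac12$ is the right threshold for the \emph{restricted} metric --- the lemma as used in the paper (e.g.\ in the lower-bound arguments, where cop--robber distances are measured in $S$) does need the restricted-metric statement, which your two error terms at $\tilde a$ and $\gamma\tilde a$ account for. The paper's Gauss--Bonnet argument, by contrast, is shorter given the differential-geometric toolkit but is specific to surfaces. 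One small point of care your write-up handles implicitly: the minimum over $\Gamma$ is attained because the action is properly discontinuous, and hyperbolic balls are convex, so the restricted and intrinsic metrics on $B(\tilde a,r)$ agree --- worth a sentence if you write this up formally.
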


\subsection{Trigonometric lemmas for the hyperbolic plane}
In the rest of this section, we present lemmas that will be used in the proof of Theorem~\ref{thm:mainthm}. Their proofs are omitted and are given in Appendix~\ref{ap:proofs}. For properties of the hyperbolic plane needed in the proofs of these lemmas we refer the reader to~\cite{MR0666074,slothers2023hyperbolic}.

First, consider Figure~\ref{fig:optimizing1} and let the robber's position be $X_1$ and the cop's position be $A$. The robber moves to $B$, while the cop moves as close to $B$ as possible in the same time. Our first lemma shows that if their starting positions would be closer together (at positions $X_2$ and $X_3$, say), then their end positions would be closer together.

\begin{lemma}
\label{lem:closertogether}
    Let $ABC$ be a hyperbolic triangle with a right angle at $C$. If $X_1\in \overline{AC}$, then 
    \begin{align*}
        d(A,B)-d(X_1,B)=\max_{X_2,X_3\in \overline{AX_1}}d(X_2,B)-d(X_3,B).
    \end{align*}
    Moreover,  $d(A,B)-d(X_1,B)=d(X_2,B)-d(X_3,B)$ if and only if $X_2= A$ and $X_3 = X_1$.
\end{lemma}

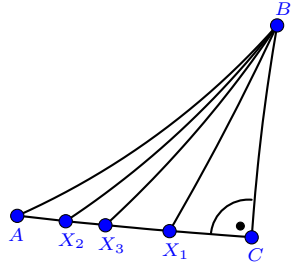
\begin{figure}
    \centering
\begin{tikzpicture}[line cap=round,line join=round,>=triangle 45,x=6cm,y=6cm]
\draw [shift={(-1.0945484369206189,1.6269683302302145)},line width=0.8pt]  plot[domain=5.131865244901581:5.5584103858721665,variable=\t]({1*1.6867312258737188*cos(\t r)+0*1.6867312258737188*sin(\t r)},{0*1.6867312258737188*cos(\t r)+1*1.6867312258737188*sin(\t r)});
\draw [shift={(-1.889244749094016,1.889753332799505)},line width=0.8pt]  plot[domain=5.4557916287934285:5.692030265211905,variable=\t]({1*2.4779857507270244*cos(\t r)+0*2.4779857507270244*sin(\t r)},{0*2.4779857507270244*cos(\t r)+1*2.4779857507270244*sin(\t r)});
\draw [shift={(-1.396481089915706,1.726809454163551)},line width=0.8pt]  plot[domain=5.298209924566212:5.621705071786334,variable=\t]({1*1.982934775901815*cos(\t r)+0*1.982934775901815*sin(\t r)},{0*1.982934775901815*cos(\t r)+1*1.982934775901815*sin(\t r)});
\draw [shift={(-4.998071046329796,2.9177597724824427)},line width=0.8pt]  plot[domain=5.75654804903177:5.846859810074327,variable=\t]({1*5.70035404813396*cos(\t r)+0*5.70035404813396*sin(\t r)},{0*5.70035404813396*cos(\t r)+1*5.70035404813396*sin(\t r)});
\draw [shift={(0.8263738222530633,10.680875632417418)},line width=0.8pt]  plot[domain=4.596439839762474:4.645348350774679,variable=\t]({1*10.666020718584496*cos(\t r)+0*10.666020718584496*sin(\t r)},{0*10.666020718584496*cos(\t r)+1*10.666020718584496*sin(\t r)});
\draw [shift={(4.624502042941939,-0.26417004806988514)},line width=0.8pt]  plot[domain=2.9698623677174503:3.0745520239797623,variable=\t]({1*4.5228094100317*cos(\t r)+0*4.5228094100317*sin(\t r)},{0*4.5228094100317*cos(\t r)+1*4.5228094100317*sin(\t r)});
\begin{scriptsize}
\draw [fill=qqqqff] (-0.407572866037683,0.08647269786135825) circle (2.5pt);
\draw[color=qqqqff] (-0.4100680595451122,0.04553425054000529) node {$A$};
\draw [fill=qqqqff] (-0.21215204102583265,0.06553475232437422) circle (2.5pt);
\draw[color=qqqqff] (-0.19805580687064303,0.02186622246506424) node {$X_3$};
\draw [fill=qqqqff] (0.16822063622937605,0.508721266190536) circle (2.5pt);
\draw[color=qqqqff] (0.1821195948078676,0.545075887634313) node {$B$};
\draw [fill=qqqqff] (0.11185258814678768,0.038814865099605154) circle (2.5pt);
\draw[color=qqqqff] (0.12099479351938675,0.00028873218786654) node {$C$};
\draw [fill=qqqqff] (-0.30014891839089974,0.07451223660235051) circle (2.5pt);
\draw[color=qqqqff] (-0.28669773990449265,0.031715326135492) node {$X_2$};
\draw [fill=qqqqff] (-0.07010261839926568,0.05259599925810576) circle (2.5pt);
\draw[color=qqqqff] (-0.05622871401648361,0.00807747732646538) node {$X_1$};
\draw[line width=0.8pt] (0.11185258814678768,0.038814865099605154)+(90:0.5cm) arc[start angle=85, end angle=175, radius=0.5cm];
\draw[fill=black] (0.08685258814678768,0.063814865099605154)  circle (1.5pt);
\end{scriptsize}
\end{tikzpicture}
  \caption{A hyperbolic right-angled triangle $ABC$ with the right angle at $C$.}

  \label{fig:optimizing1}
\end{figure}

The second lemma shows that $d(A,B)-d(X_3,B)$ would decrease if the length of the side $\overline{AC}$ would decrease, while $d(A,X_3)$ would remain the same. 

\begin{lemma}
\label{lem:shifttoright}
    Let $ABC$ be a hyperbolic triangle with a right angle at $C$. If $X_3\in \overline{AC}$, then 
    \begin{align*}
        d(A,B)-d(X_3,B)=\max_{\substack{X_1,X_2\in \overline{AC}\\d(X_1,X_2)=d(A,X_3)}}d(X_2,B)-d(X_1,B).
    \end{align*}
\end{lemma}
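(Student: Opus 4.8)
The plan is to parametrize the segment $\overline{AC}$ by hyperbolic arc length from $C$ and thereby reduce the statement to a single convexity property. For a point $P \in \overline{AC}$ at distance $t$ from $C$, write $b = d(B,C)$ and $a = d(A,C)$. Since $BC$ is perpendicular to the line through $A$ and $C$, the triangle $PBC$ again has a right angle at $C$, so the hyperbolic Pythagorean theorem gives $\cosh d(P,B) = \cosh t \, \cosh b$. Thus $d(P,B) = f(t)$ where $f(t) = \acosh(\cosh t \, \cosh b)$, which is strictly increasing on $[0,a]$. Letting $c$ denote the distance from $C$ to $X_3$ and setting $\delta = d(A,X_3) = a - c$, the left-hand side of the claim is exactly $f(a) - f(a-\delta)$.

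For the right-hand side, any admissible pair $X_1,X_2 \in \overline{AC}$ with $d(X_1,X_2)=\delta$ can be written, after possibly swapping names, with $X_1$ at distance $s$ and $X_2$ at distance $s+\delta$ from $C$, where $s \in [0,a-\delta]$; the opposite orientation makes $d(X_2,B)-d(X_1,B)$ negative (as $f$ is increasing) and so never attains the maximum, which is at least the nonnegative left-hand side. Hence the right-hand side equals $\max_{s\in[0,a-\delta]} g(s)$ with $g(s) = f(s+\delta)-f(s)$. It therefore suffices to show that $g$ is nondecreasing—equivalently, that $f$ is convex—so that the maximum is attained at $s=a-\delta$, which corresponds precisely to $X_1 = X_3$ and $X_2 = A$.

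To establish convexity I would differentiate the relation $\cosh f = \cosh t \, \cosh b$, obtaining
\[
    f'(t) = \frac{\cosh b \, \sinh t}{\sqrt{\cosh^2 t \, \cosh^2 b - 1}}.
\]
Substituting $u = \cosh t$ and $k = \cosh b$, the monotonicity of $f'$ in $t$ is equivalent to the monotonicity of $(u^2-1)/(k^2u^2-1)$ in $u \ge 1$. A short quotient-rule computation shows its derivative has numerator proportional to $u(k^2-1)$, which is nonnegative since $k = \cosh b \ge 1$. Thus $f'$ is increasing, $f$ is convex, and consequently $g(s) = f(s+\delta)-f(s)$ is nondecreasing, giving $\max_{s\in[0,a-\delta]} g(s) = g(a-\delta) = f(a)-f(a-\delta)$, which matches the left-hand side.

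The only genuine obstacle is verifying the convexity of $f$; everything else is bookkeeping about the parametrization and the orientation of the pair $(X_1,X_2)$. The computation is elementary once the substitution $u = \cosh t$ is made, and it is worth noting that the sign of the key numerator is governed by $k^2 - 1 = \sinh^2 b \ge 0$—precisely the place where the hyperbolic geometry enters, since in the Euclidean analogue this quantity would vanish and $f$ would fail to be strictly convex.
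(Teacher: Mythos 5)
Your proof is correct and follows essentially the same route as the paper's: both reduce the statement, via the hyperbolic Pythagorean theorem, to the convexity of $f(t)=\acosh(\cosh b\,\cosh t)$ along $\overline{AC}$, and then conclude that the increment of fixed width $d(A,X_3)$ is maximized when the pair is pushed to the end at $A$, i.e.\ at $(X_1,X_2)=(X_3,A)$. The only inessential differences are that you verify convexity by showing $f'$ is increasing via the substitution $u=\cosh t$ (where the paper computes the second derivative directly and phrases the conclusion through monotone secant slopes), and that you make explicit the orientation of the pair $(X_1,X_2)$, which the paper leaves implicit.
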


The next lemma helps to compare triangles where two sides have the same length.
\begin{lemma}
\label{lem:intuition}
    Suppose $ABC$, $A'B'C'$ are hyperbolic triangles, $d(A,B)=d(A',B')$, $d(B,C)=d(B',C')$, $\angle BCA>\angle B'C'A'$ and $\angle CAB<\frac{\pi}{2}$. Then it holds that $d(A,C)<d(A',C')$.
\end{lemma}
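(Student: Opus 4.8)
The plan is to hold the two prescribed lengths $a:=d(B,C)=d(B',C')$ and $c:=d(A,B)=d(A',B')$ fixed and to view the remaining side $b:=d(A,C)$ and the angle $\gamma:=\angle BCA$ at $C$ as functions of a single parameter, reducing the statement to a monotonicity claim. Concretely, I would place both triangles in $\mathbb{H}^2$ with $C=C'$ and $B=B'$, with $B$ on a fixed geodesic ray from $C$; then $A$ and $A'$ lie on the hyperbolic circle of radius $c$ centred at $B$, and $\gamma,\gamma'$ are the angles that the rays $CA,CA'$ make with $CB$. In these terms the assertion $\gamma>\gamma'\Rightarrow b<b'$ says precisely that, in the appropriate regime, $\gamma$ is a strictly decreasing function of the side $b=d(A,C)$.

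The engine is the hyperbolic law of cosines. Applied to the side $c$ opposite $\gamma$ it gives $\cos\gamma=\frac{\cosh a\cosh b-\cosh c}{\sinh a\sinh b}=:g(b)$, an explicit function of $b$ for fixed $a,c$. A direct differentiation yields $g'(b)=\frac{\cosh b\cosh c-\cosh a}{\sinh a\sinh^2 b}$. Applying the law of cosines to the side $a$ opposite $\angle CAB$, namely $\cosh a=\cosh b\cosh c-\sinh b\sinh c\cos\angle CAB$, identifies the numerator as $\sinh b\sinh c\cos\angle CAB$, whence $g'(b)=\frac{\sinh c\cos\angle CAB}{\sinh a\sinh b}$ has the same sign as $\cos\angle CAB$. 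Thus on the range of $b$ where the angle at $A$ is acute, $g$ is strictly increasing and $\gamma$ strictly decreasing in $b$; since $\angle CAB<\frac\pi2$ by hypothesis, the triangle $ABC$ lies in this range, and $\gamma>\gamma'$ forces $b<b'$ within this regime.

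The main obstacle is that $b\mapsto\gamma$ is only piecewise monotone: as $b$ ranges over the admissible interval $(|a-c|,a+c)$ the angle $\gamma$ first increases and then decreases, the turning point being exactly the configuration with $\angle CAB=\frac\pi2$. The hypothesis $\angle CAB<\frac\pi2$ pins $ABC$ to the decreasing branch, but to conclude I must ensure that the comparison with $A'B'C'$ takes place on that same branch. I would secure this with the foot-of-perpendicular decomposition already used in Lemmas~\ref{lem:closertogether} and~\ref{lem:shifttoright}: dropping the perpendicular from $B$ to the line through $C$ and $A$ with foot $F$, the acute-angle hypothesis places $F$ strictly between $C$ and $A$, so that $b=d(C,F)+d(F,A)$ with $\sinh d(B,F)=\sinh a\sin\gamma$, $\tanh d(C,F)=\tanh a\cos\gamma$ and $\cosh c=\cosh d(B,F)\cosh d(F,A)$. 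Lowering the angle from $\gamma$ to $\gamma'$ then increases $d(C,F)$ and decreases $d(B,F)$, hence increases $d(F,A)$, so both summands grow and $b'>b$ follows once the foot $F'$ is likewise verified to lie between $C'$ and $A'$. Establishing this last configuration fact — equivalently, ruling out a comparison of the decreasing branch against the increasing one — is the delicate step, and it is exactly what the hypothesis $\angle CAB<\frac\pi2$ is there to control.
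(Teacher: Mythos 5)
Your calculus engine coincides with the paper's own proof: the paper likewise fixes $a=d(B,C)$ and $c=d(A,B)$, writes $\cos\gamma$ as a function $f(b)$ of $b=d(A,C)$ via the hyperbolic law of cosines, differentiates, and invokes $\angle CAB<\frac{\pi}{2}$ to make the numerator $\cosh b\cosh c-\cosh a$ nonnegative. Your identification of that numerator as $\sinh b\,\sinh c\,\cos\angle CAB$ via the dual law of cosines is the same fact in cleaner form (and your denominator $\sinh a\sinh^2 b$ is the correct one; the paper's printed $\sinh^2(a)\cosh^2(b)$ is a typo, immaterial to the sign). The genuinely new content of your write-up is the observation that this argument only proves monotonicity of $\gamma$ in $b$ on the branch where the angle at $A$ is acute, so one must still rule out that $A'B'C'$ sits on the other, increasing branch --- a point the paper's proof passes over in silence by treating $f$ as globally monotone.

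But you leave exactly that step open (your ``delicate step''), asserting that the hypothesis $\angle CAB<\frac{\pi}{2}$ ``is there to control'' it --- and it is not: your proposal is incomplete, and it cannot be completed as stated, because the lemma is actually false with the acuteness hypothesis placed only on $ABC$. Take $a=2$, $c=1$, $b=2$, $b'=1.2$ (both triples satisfy the triangle inequality). Then $\cos\gamma=\frac{\cosh^2 2-\cosh 1}{\sinh^2 2}\approx 0.959$ and $\cos\gamma'=\frac{\cosh 2\cosh(1.2)-\cosh 1}{\sinh 2\sinh(1.2)}\approx 0.962$, so $\gamma>\gamma'$; moreover $\cos\angle CAB=\frac{\cosh 2\cosh 1-\cosh 2}{\sinh 2\sinh 1}\approx 0.48>0$, so $\angle CAB<\frac{\pi}{2}$; yet $d(A,C)=2>1.2=d(A',C')$. (Here $\angle C'A'B'$ is obtuse, i.e.\ $A'B'C'$ lies on your increasing branch.) The repair is to impose acuteness at $A'$ rather than at $A$: if $\angle C'A'B'\le\frac{\pi}{2}$, then $\cosh b'\cosh c\ge\cosh a$, i.e.\ $b'\ge b_0:=\acosh\left(\frac{\cosh a}{\cosh c}\right)$ (take $b_0=0$ when $\cosh a\le\cosh c$), and either $b\ge b_0$, so both triangles lie on the branch where $\gamma$ is strictly decreasing in $b$ and $\gamma>\gamma'$ forces $b<b'$, or $b<b_0\le b'$ outright. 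Under this corrected hypothesis your first computation already finishes the proof, and the foot-of-perpendicular machinery becomes unnecessary (it also tacitly needs $\gamma<\frac{\pi}{2}$ for the foot $F$ to lie between $C$ and $A$, which is not given). The corrected lemma still serves the paper's purpose: in Claim~\ref{clm:not-b} the primed triangle is $C_1^{t-2}z_cC_1^{t-1}$ with $A'=C_1^{t-1}$, and since $z_c$ lies on the ray from $C_1^{t-1}$ through the foot $p_c$ of the perpendicular from $C_1^{t-2}$ to $g^{t-1}$, the angle $\angle z_cC_1^{t-1}C_1^{t-2}$ is an acute angle of a right triangle, so the acuteness needed at $A'$ does hold in that application.
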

The Pythagorean theorem for the hyperbolic plane for a triangle $ABC$ with right angle at $C$ says that $\cosh(\overline{AB})=\cosh(\overline{AC})\cdot \cosh(\overline{BC}).$ In the following we observe that the increase of $\overline{BC}$ increases $\overline{AB}$ "uniformly". 
\begin{lemma}
\label{lem:mean-value}
    Suppose $t_2>t_1>0$ and $w>1$, then there exists $\eta>0$ such that for $x,y\in [t_1,t_2]$ with $x < y$ it holds that,
    \begin{align*}
        \acosh(w\cosh(x))-\acosh(w\cosh(y))\geq \eta \cdot (y-x).
    \end{align*}
\end{lemma}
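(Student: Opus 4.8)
The statement as typeset bounds $\acosh(w\cosh x)-\acosh(w\cosh y)$, with $x<y$, from below by $\eta(y-x)$. Writing $f(t)=\acosh(w\cosh t)$, this left-hand side is exactly $f(x)-f(y)$, so the first thing I would check is the monotonicity of $f$. On $(0,\infty)$ the map $t\mapsto w\cosh t$ is strictly increasing with values $>1$ (here $w>1$ and $t_1>0$ are used), and $\acosh$ is increasing, so $f$ is strictly increasing; hence for $x<y$ the quantity $f(x)-f(y)$ is \emph{negative}, whereas $\eta(y-x)$ is positive. Thus the inequality precisely as displayed cannot hold for any $\eta>0$: the two operands on the left must be read in the order producing the positive increment $f(y)-f(x)=\acosh(w\cosh y)-\acosh(w\cosh x)$. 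This is evidently the intended reading, matching the remark preceding the lemma that ``the increase of $\overline{BC}$ increases $\overline{AB}$ uniformly''. So rather than reverse the inequality tacitly, I would state plainly that the subtraction is in the wrong order and prove the meaningful claim: there exists $\eta>0$ such that $\acosh(w\cosh y)-\acosh(w\cosh x)\geq\eta(y-x)$ for all $x<y$ in $[t_1,t_2]$.

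The plan for that claim is a one-line application of the Mean Value Theorem once the derivative is controlled. On $[t_1,t_2]\subset(0,\infty)$ the function $f$ is $C^1$, with
\[
f'(t)=\frac{w\sinh t}{\sqrt{w^2\cosh^2 t-1}}.
\]
Because $t_1>0$ gives $\sinh t\geq\sinh t_1>0$ and $w\cosh t\geq w\cosh t_1>1$ throughout the interval, the numerator is bounded away from $0$ and the denominator is positive and continuous, so $f'$ is a strictly positive continuous function on the compact interval $[t_1,t_2]$. By the extreme value theorem it attains a positive minimum, and I would set $\eta:=\min_{t\in[t_1,t_2]}f'(t)>0$. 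Then for any $x<y$ in $[t_1,t_2]$ there is $\xi\in(x,y)$ with $f(y)-f(x)=f'(\xi)(y-x)\geq\eta(y-x)$, which is exactly the desired uniform linear lower bound.

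The work here is genuinely bookkeeping rather than a delicate estimate: the only substantive point is that $\eta$ must be \emph{uniform}, i.e.\ independent of the particular pair $x,y$, and this is precisely what compactness of $[t_1,t_2]$ together with the hypothesis $t_1>0$ supplies. I would stress why $t_1>0$ is indispensable: at $t=0$ one has $f'(0)=0$ (since $\sinh 0=0$), so on any interval reaching down to $0$ the infimum of $f'$ is $0$ and no positive $\eta$ can survive; the assumption that the window is bounded away from the origin is exactly what rules this out. Beyond this observation there is no obstacle, since continuity and positivity of $f'$ on a compact interval immediately yield $\eta$ and the Mean Value Theorem closes the argument.
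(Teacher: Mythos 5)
Your proof is correct and essentially identical to the paper's own: the appendix proof likewise applies the mean value theorem to $f(t)=\acosh(w\cosh t)$ and bounds the derivative $f'(\xi)=\frac{w\sinh \xi}{\sqrt{w^2-1+w^2\sinh^2 \xi}}$ from below by $\frac{w\sinh t_1}{\sqrt{w^2-1+w^2\sinh^2 t_2}}>0$, which is exactly your $\eta=\min_{t\in[t_1,t_2]}f'(t)$ obtained by compactness, hinging on the same hypotheses $t_1>0$ and $w>1$. Your diagnosis of the reversed subtraction is also accurate — the paper's proof carries the same slip, writing $\acosh(w\cosh(x))-\acosh(w\cosh(y))\geq c\cdot(y-x)$ for $x<y$ even though, since $f$ is strictly increasing, what it actually establishes is the intended inequality $\acosh(w\cosh(y))-\acosh(w\cosh(x))\geq\eta\cdot(y-x)$ that you prove.
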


\section{Hyperbolic surfaces}
\label{sec:proof}

In this section we prove that two cops can win the game on any compact hyperbolic surface. In fact, this is the most difficult step towards proving Theorem~\ref{thm:hyperbolicmanifold}.
\begin{theorem}
\label{thm:mainthm}
   If $S$ is a compact hyperbolic surface, then $c(S)=2$.
\end{theorem}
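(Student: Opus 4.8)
The plan is to win with two cops by combining a pursuit cop with a route-constraining cop, using the universal cover $\mathbb{H}^2$ as the arena for the analysis. By Theorem~\ref{thm:constantcurvature} we may write $S = \mathbb{H}^2/\Gamma$ and fix the covering projection $p\colon \mathbb{H}^2 \to S$. The first cop $c_1$ plays a naive pursuit: at each round she moves along a geodesic toward the robber's current position $r^n$, decreasing $d(c_1^n, r^n)$ whenever she has the slack to do so. The difficulty with pure pursuit on a surface of large genus is that the robber can exploit nontrivial topology: he can run along a noncontractible geodesic loop and "reappear" behind $c_1$, so that naive chasing never closes the gap. The role of the second cop $c_2$ is precisely to rob the robber of free use of his geodesics. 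I would use Lemma~\ref{lem:guardingpath}: after $c_2$ spends the length of an isometric path adjusting herself, she can guard it, forcing the robber to stay on one side. By guarding a suitably chosen isometric arc (lifting to a geodesic segment in $\mathbb{H}^2$), $c_2$ forces the robber's trajectory to diverge from the straight geodesic route he would like to take.

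The heart of the argument is a lifting/gap-contraction scheme. Lift the players' positions to $\mathbb{H}^2$, choosing lifts $\tilde c_1^n$ of $c_1^n$ and $\tilde r^n$ of $r^n$ that stay in a common fundamental region's neighborhood; because $c_1$ pursues along shortest geodesics and the ball of radius $r<\sys(S)/4$ is isometric to a hyperbolic disk by Lemma~\ref{lem:ball-sys}, local pursuit on $S$ lifts faithfully to local pursuit in $\mathbb{H}^2$ once the distance is small. I would set up a potential function measuring the lifted gap $d_{\mathbb{H}^2}(\tilde c_1^n, \tilde r^n)$, and argue it cannot remain bounded away from $0$. Here the hyperbolic trigonometric lemmas do the quantitative work. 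When the robber moves from $X_1$ to $B$ and the cop responds, Lemma~\ref{lem:closertogether} shows that the per-round change in gap is maximized when the players are farthest apart, so a positive gap guarantees a definite rate at which $c_1$ can catch up when the robber is forced off a straight line; Lemma~\ref{lem:shifttoright} and Lemma~\ref{lem:intuition} let me compare the relevant right triangles and conclude that the detour imposed by $c_2$ costs the robber strictly more distance than it costs $c_1$. Lemma~\ref{lem:mean-value} supplies a uniform lower bound $\eta$ on the gap contraction per unit of forced detour, via the hyperbolic Pythagorean identity $\cosh(\overline{AB}) = \cosh(\overline{AC})\cosh(\overline{BC})$: exponential divergence of geodesics in negative curvature means that a bounded sideways displacement forced by $c_2$ translates into a uniformly positive shortening of $c_1$'s catch-up geodesic.

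Assembling these, I would run the following loop. So long as the lifted gap exceeds some threshold $\delta>0$, position $c_2$ to guard an isometric path transverse to the robber's intended geodesic, forcing him to detour by a fixed amount each phase; by Lemma~\ref{lem:mean-value} each such phase reduces the lifted gap by at least $\eta\cdot(\text{detour})$, while $c_1$'s pursuit never lets the gap grow. Since $\sum_{n\ge 1}\tau(n)=\infty$, the robber cannot stall indefinitely, and the cumulative forced detour is unbounded, so the gap is driven below any prescribed $\delta$; once $d(c_1^n,r^n)<\sys(S)/4$ the chase is confined to a single hyperbolic disk where the $\cat(0)$/convexity behavior guarantees $c_1$ drives the distance to $0$. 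This establishes $\inf_{n}d(c_1^n,r^n)=0$, i.e.\ the cops win in the sense of~\eqref{eq:copwin}, so $c(S)\le 2$; the matching lower bound $c(S)\ge 2$ follows because a single cop cannot win on a space containing isometric hyperbolic disks, as one cop fails already on the disk~\cite{MR4517712}. The main obstacle I anticipate is the bookkeeping of lifts across the covering: I must ensure that the chosen lifts $\tilde c_1^n,\tilde r^n$ remain consistent from round to round (that the robber cannot secretly change homotopy class to reset the gap) and that $c_2$'s guarded path in $S$ lifts to a genuine barrier in $\mathbb{H}^2$; controlling this interaction between the discrete covering combinatorics and the continuous trigonometric estimates is where the real work lies.
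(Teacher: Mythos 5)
Your outline reproduces the paper's architecture at the level of roles---pursuer $c_1$, barrier $c_2$, analysis in the universal cover, trigonometric Lemmas~\ref{lem:closertogether}--\ref{lem:mean-value} supplying a per-phase gain $\delta=\eta\varepsilon$---but the two steps you defer as ``where the real work lies'' are precisely the content of the paper's proof, and without them the argument does not close. First, you never say how $c_2$ gets to her barrier. The paper's key move is that at the start of each phase the cops \emph{re-choose the lift} of $c_2$'s position: since every point of $\mathcal{D}$ is within $D=\diam(S)$ of some lift of any point of $S$, there is a copy $C_2^{t_i}$ within $D$ of the point $P$ on the geodesic through $R^{t_i},C_1^{t_i}$ at distance $10D$ beyond the robber. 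This re-lifting costs nothing on $S$, makes $c_2$ effectively omnipresent in the cover, and is also why guarding only the finite segment $\overline{BB'}$ of length $16D$ suffices: Claim~\ref{cl:intersectionisfar} gives $c_2$ at least an $8D$ head start to every point of it, so Lemma~\ref{lem:guardingpath} applies, and the robber cannot simply circumvent the barrier. Second, you do not show the robber is ever actually \emph{forced} to detour. The paper gives $c_1$ the explicit two-mode strategy (a)/(b) maintaining invariant~\eqref{stmt:coplower}, uses the potential $d(C_1^k,B)+d(C_1^k,h)$, which drops by $\tau(k)$ per step, to prove the robber must cross $\overline{C_1^tB}$ or $\overline{C_1^tB'}$ within total agility $32D$ (this is what fixes the phase lengths $t_{i+1}-t_i$), and only then do the trigonometric lemmas produce the gain, via an exchange argument (Claims~\ref{clm:if-a} and~\ref{clm:not-b}) reducing the robber's optimal play to a single step crossing at $B$, where the hyperbolic Pythagorean theorem and Lemma~\ref{lem:mean-value} give $d(R^t,C_1^t)\le d(R^{t_i},C_1^{t_i})-\eta\varepsilon$.

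Two of your stated steps are also wrong as written. Once $d(c_1,r)<\sys(S)/4$ the chase is \emph{not} confined to a single hyperbolic disk: the robber moves freely on $S$, and even on $\mathbb{H}^2$ itself (a $\cat(0)$ but noncompact space) one cop cannot decrease the distance to a robber who runs straight away, so there is no endgame in which $c_1$ alone drives the distance to $0$ --- nor is one needed, since the cop win condition~\eqref{eq:copwin} only asks that the infimum be $0$, which the iterated phase gain already delivers. For the lower bound, ``one cop fails on the disk, hence on $S$'' is not a valid reduction: in the disk game the cop is also confined to the disk, whereas on $S$ she is not. The paper instead gives a direct robber strategy: with agility $\tau\equiv\sys(S)/16$ he always steps directly away from the cop inside a ball isometric to a hyperbolic disk (Lemma~\ref{lem:ball-sys}), keeping $d(c,r)$ bounded below by a positive constant for all time.
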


\subsection{The lower bound}
Let $s=\sys(S)$. To show that $c(S)>1$ we play the game with one cop $c$ and the robber $r$ and show that the robber has a strategy to stay at distance at least $\frac{s}{16}$ to the cop. The robber chooses the agility function  $\tau\equiv\frac{s}{16}$ and initial positions such that $d(c^0,r^0)>\frac{s}{16}$. Informally, the robber's strategy is to move in the direction opposite to the cop's position.

If $d(c^k,r^k)\geq \frac{3s}{16}$, then the robber's strategy is to stay at the same place, which means $r^{k+1}=r^k$. Then  $d(c^{k+1},r^{k+1})\geq d(c^k,r^{k+1})-\frac{s}{16}\geq\frac{s}{8}$. From Lemma~\ref{lem:ball-sys} it follows that if  $d(c^k,r^k)<\frac{3s}{16}$, then there exists a position $r^{k+1}$ such that $d(r^k,r^{k+1})=\frac{s}{16}$ and an isometric path between $r^{k+1}$ and $c^{k}$ contains $r^k$. This means that $d(c^k,r^{k+1})= d(c^k,r^k)+\frac{s}{8}>\frac{s}{4}$ and hence $d(c^{k+1},r^{k+1})>\frac{s}{8}$.

\subsection{The upper bound}
Instead of playing the Cops and Robber game on the hyperbolic surface $S$, we will consider the game on its universal cover, viewed as the Poincar\'e disk $\mathcal{D}$. For each time step $k$ we choose a representation $C_i^k, R^k$ of $c_i^k,r^k$ in $\mathcal{D}$ ($i=1,2$). 
 
 If the robber was only escaping from cop $c_1$, then by the above argument his strategy could be to stay roughly on the geodesic $g$ defined in $\mathcal{D}$ by the current positions of the robber $r$ and cop $c_1$. Since we have two cops available, we can force the robber to take a different strategy. The idea of the cops' strategy is that cop $c_1$ will chase the robber, while cop $c_2$ will force the robber to move away from $g$, allowing cop $c_1$ to move closer to~$r$.  

Let $D=\diam(S)$. The robber chooses an agility function $\tau$ and starting positions, $c_1^0,c_2^0,r^0$. 
Let $0=t_0<t_1<t_2<t_3<\cdots$ be a sequence of integers representing time steps such that $$\sum_{k=t_i}^{t_{i+1}-1} \tau(k) \geq 32D.$$

 Our goal is to show that for every $\varepsilon>0$ the cops $c_1,c_2$ have a strategy in which the cop $c_1$ can come $\varepsilon$-close to the robber $r$. This suffices to show that the cop win number is at most $2$ by the definition of infimum, see~\cite[Corollary 6]{Mo22}. We will attain our goal by showing that in between time step $t_i$ and $t_{i+1}$, the cop $c_1$ can decrease her distance to the robber by some small constant $\delta$.
 
 \begin{lemma}
 \label{prop:epsilon}
     For every $\varepsilon>0$, there exists $\delta>0$ and strategies for the cops $c_1,c_2$, such that if $d(c_1^{t_i}, r^{t_i})\geq \varepsilon$ and the cops follow their strategy then either $c_2$ captures the robber, or we have
 \begin{align*}
    d(c_1^{t_{i+1}}, r^{t_{i+1}})\leq d(c_1^{t_{i}}, r^{t_{i}})-\delta.
\end{align*}
 \end{lemma}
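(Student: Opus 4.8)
The plan is to play in the universal cover $\mathcal{D}$ and to track, at every step $k$, a lift $R^k$ of the robber and a lift $C_1^k$ of cop $c_1$ chosen so that $d_{\mathcal D}(C_1^k,R^k)=d_S(c_1^k,r^k)$; because $\diam(S)=D$ this common value always lies in $[\varepsilon,D]$ while the hypothesis of the lemma holds, and this bounded range is what will ultimately make the gain uniform. Let $g$ be the complete geodesic of $\mathcal D$ through $C_1^{t_i}$ and $R^{t_i}$, oriented in the robber's escape direction. The two cops divide labour exactly as advertised in the introduction: $c_1$ chases the currently nearest lift greedily along the segment $\overline{C_1^k R^k}$, while $c_2$ is used only to prevent the robber from running straight along $g$. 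The statement then reduces to two assertions: (a) $c_2$ can force the robber to acquire a lateral displacement from $g$ of at least a fixed $\rho>0$ (or be captured), and (b) such a displacement lets $c_1$ reduce her distance by a uniform $\delta>0$.

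For (a) I would have $c_2$ guard a short geodesic arc of the tessellation produced by Theorem~\ref{thm:constantcurvature}. The geodesic $g$, projected to $S$, crosses the walls of the fundamental polygon transversally; I would select one wall $I$ — a geodesic arc of length just below $\tfrac{s}{2}$ (here $s=\sys(S)$), which is an isometric path by the injectivity-radius consequence of Lemma~\ref{lem:ball-sys} — positioned so that $g$ meets it near its midpoint and not far ahead of the robber. By Lemma~\ref{lem:guardingpath}, $c_2$ can guard $I$ after setup time $\ell(I)+D< 2D$, comfortably inside the phase budget $\sum_{k=t_i}^{t_{i+1}-1}\tau(k)\ge 32D$. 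Once $I$ is guarded, the robber cannot touch the corresponding lift, which straddles $g$ and extends about $\tfrac{s}{4}$ to each side; to continue forward he must swing past an endpoint, forcing a lateral deviation exceeding $\tfrac{s}{4}$. If instead he abandons $g$ and turns, that is itself a deviation; and if he stalls, $c_1$ simply walks up and closes a large amount of distance. Thus in every case the robber is captured by $c_2$ or deviates from $g$ by at least a fixed $\rho$ depending only on $S$.

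For (b) I would analyse the hyperbolic right triangle created by one such deviation: with $C_1$ on $g$, the deviated robber lift $B$ off $g$, and $C$ the foot of the perpendicular from $B$ to $g$, the hyperbolic Pythagorean theorem gives $\cosh d(C_1,B)=\cosh d(C_1,C)\,\cosh d(C,B)$ with $d(C,B)\ge\rho$. Lemmas~\ref{lem:closertogether} and~\ref{lem:shifttoright} show that $c_1$'s greedy advance toward the foot realises the maximal decrease of her distance to $B$, and Lemma~\ref{lem:intuition} lets me compare this triangle against the no-deviation configuration, in which the distance would merely be maintained. Finally Lemma~\ref{lem:mean-value}, applied with $w=\cosh\rho>1$ on the interval $[\varepsilon,2D]$ furnished by the nearest-lift bound, shows that each unit by which $c_1$ shortens her leg $d(C_1,C)$ shortens the hypotenuse $d(C_1,B)$ by at least a fixed $\eta>0$; assembling these contributions over the phase yields a gain bounded below by a constant $\delta=\delta(\varepsilon,s,D)>0$ independent of $i$, so that $d(c_1^{t_{i+1}},r^{t_{i+1}})\le d(c_1^{t_i},r^{t_i})-\delta$.

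The step I expect to be the real obstacle is making (a) robust against an \emph{adaptive} robber. The delicate points are that the robber may change his escape direction after $c_2$ commits to $I$, so the guarded wall must be re-chosen across several guarding sub-phases (which the generous $32D$ budget should permit); that he may try to skirt the extreme end of $I$ with only a tiny deviation, which is why I insist that $g$ meet $I$ near its midpoint and why the spacing of the $\Gamma$-translates of $I$ must be controlled by the systole; and that the bookkeeping between $S$ and $\mathcal D$ must keep the tracked lift $R^k$ continuous while still letting $c_1$ switch to a strictly closer lift whenever that shortens the chase. Once the forced deviation $\rho$ is secured uniformly, part (b) is essentially a matter of combining the four trigonometric lemmas, so I anticipate the geometric forcing argument of (a), rather than the distance estimate of (b), to carry the weight of the proof.
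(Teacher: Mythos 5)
Your skeleton matches the paper's in outline (play in the universal cover, have $c_2$ erect a guarded barrier transversal to the cop--robber geodesic ahead of the robber, and convert the forced detour into a uniform gain for $c_1$ via Lemma~\ref{lem:mean-value}), but your part (a) has a genuine gap at exactly the point you flag. You tie the barrier to a wall of the fundamental tessellation of length just under $\frac{s}{2}$ and require that $g$ meet it near its midpoint; nothing guarantees such a wall exists along the escape route. The geodesic $g$ may cross every tessellation wall it meets arbitrarily close to a vertex, and an adaptive robber can steer to make this happen, so the forced lateral deviation is not bounded below by $\frac{s}{4}$, and your scheme of re-choosing walls across sub-phases comes with no uniform termination or accumulation argument. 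The paper sidesteps the positioning problem entirely: since $\diam(S)=D$, the lifts of $c_2$ are $D$-dense in $\mathcal{D}$, so at time $t_i$ the cops simply \emph{choose a new lift} $C_2^{t_i}$ within distance $D$ of the point $P$ on $g_0$ at distance $10D$ ahead of $R^{t_i}$, and $c_2$ guards the segment $\overline{BB'}$ of the perpendicular $h=o_{g_0}(C_2^{t_i})$ extending $8D$ to \emph{each} side of $g_0$ --- a barrier whose position and half-width are dictated by $D$ alone, not by the systole. The distance bookkeeping ($c_2$ is within $9D$ of every point of $\overline{BB'}$ while the robber is at distance at least $9D$ from $h$) makes the guarding via Lemma~\ref{lem:guardingpath} immediate, with no setup sub-phases at all.

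The second gap is in how the gain is realized and justified. Your ``deviation $\rho$ implies gain $\delta$'' is not the actual mechanism: in the paper the gain event is the robber \emph{crossing the segment} $\overline{C_1^t B}$ from cop to barrier-endpoint, and this crossing is forced by a potential argument --- cop $c_1$'s moves of types (a) and (b) keep $R$ and $B$ on the same side of $g^k$ while decreasing $d(C_1,B)+d(C_1,h)$ by $\tau(k)$ every round, and since this potential starts below $32D$, within the phase budget the robber must cross or be caught; your informal ``if he stalls, $c_1$ walks up'' has no such quantitative backbone. Moreover, your step (b) presupposes that the robber's optimal play reduces to a single clean right-triangle comparison, but that reduction is precisely where the paper does its hardest work: the maximality analysis showing the last and the second-to-last cop steps are of type (a) (via Lambert quadrilaterals, the sine formula, and Lemma~\ref{lem:intuition}), which collapses the worst case to $t=t_i+1$ with the crossing at $B$ itself. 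Only then does the Pythagorean computation with $b=\cosh(8D)$ and Lemma~\ref{lem:mean-value}, applied to $x=d(R^{t_i},H)$ and $y=d(C_1^{t_i},H)$ with $y-x\geq\varepsilon$, yield $\delta=\eta\varepsilon$. You cite the right four lemmas, so you see the mechanism, but asserting that the greedy advance ``realises the maximal decrease'' is the statement to be proved, not an input.
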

 
The rest of the section is devoted to the proof of Lemma~\ref{prop:epsilon}.
Let $R^{0},C_1^{0}$ be such that $d(R^0,C_1^0)=d(r^0,c_1^0)$. 

\subsubsection{The strategy of cop $c_2$}

We explain the strategy of the cop $c_2$ first. Since we play the game in the covering space $\mathcal{D}$, we have arbitrarily many copies of the position of $c_2$ that we can choose from~(see Figure~\ref{fig:copies}). 
At each time step $t_i$, the cop $c_2$ chooses a new representation $C_2^{t_i}$, and therefore $d(C_2^{t_i-1}, C_2^{t_i})$ in $\mathcal D$ is possibly greater than $\tau(t_i)$ but for the projection onto $S$ we maintain the condition that $d(c_2^{t_i-1}, c_2^{t_i})\leq \tau(t_i)$. 
 In between the time steps $t_i,t_{i+1}$ we choose the representation of $c_2$ such that it is coherent with the agility function, which means $d(C_2^{k-1},C_2^{k})\leq \tau(k)$ for $t_i< k < t_{i+1}$.  We consider the geodesic $g_0=g_0(t_i)$ through $R^{t_i},C_1^{t_i}$. We choose $C_2^{t_i}$ for cop $c_2$ such that it is close to the geodesic $g_0$ but sufficiently far from $R^{t_i}$, which we make more precise in the following. 

\begin{figure}
    \centering
\includegraphics[width=0.4\textwidth]{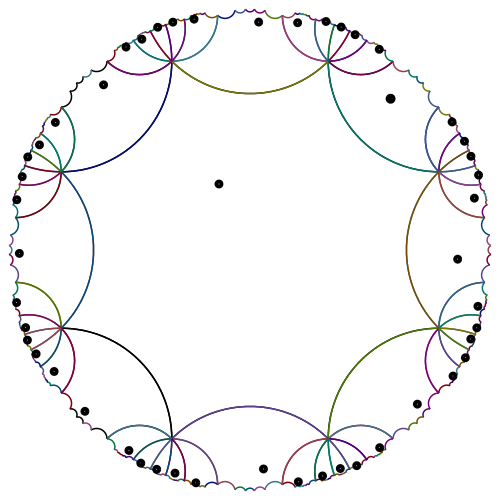}
    \caption{Copies of the fundamental polygon $P(4g,\frac{2\pi}{4g})$ in $\mathcal{D}$. Depicted is a point and one copy of this point in each of the depicted copies of the fundamental polygon.}
    \label{fig:copies}
\end{figure}

Let $P$ be the point on $g_0$ at distance $10D$ from $R^{t_i}$ that is further away from $C_1^{t_i}$. Note that there is a copy $C_2^{t_i}$ in the universal covering space which is at distance at most $D$ from $P$, this will be the starting position of $c_2$. We consider $h=o_{g_0}(C_2^{t_i})$, the orthogonal geodesic to $g_0$ through $C_2^{t_i}$. The strategy of $c_2$ between time steps $t_i$ and $t_{i+1}$ is to chase the orthogonal projection of the robber on $h$. Let $B,B'$ be points on $h$ at distance $8D$ from $g_0\cap h$.

\begin{lemma}
\label{lem:c2-guards}
    The cop $c_2$ can guard the path from $B$ to $B'$ on $h$.
\end{lemma}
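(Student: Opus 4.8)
The plan is to reduce the claim to the Isometric Path Lemma (Lemma~\ref{lem:guardingpath}), applied to the geodesic arc $\overline{BB'}\subset h$, with the whole pursuit played in the universal cover $\mathcal{D}$ over the block of rounds from $t_i$ to $t_{i+1}$. Two bookkeeping facts make this transfer legitimate. Since the covering map $p\colon\mathcal{D}\to S$ is a local isometry and the lifts are chosen coherently, cop $c_2$'s lift satisfies $d(C_2^{k-1},C_2^{k})\leq\tau(k)$ for $t_i<k<t_{i+1}$, so any trajectory of the lift along $h$ projects to a legal move sequence for $c_2$ on $S$; likewise the robber's lift $R^k$ moves at most $\tau(k)$ in $\mathcal{D}$. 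Moreover, if $C_2^{k}=R^{k}$ in $\mathcal{D}$ then $c_2^{k}=r^{k}$ in $S$, so it is enough to guard $\overline{BB'}$ against the robber's lift inside $\mathcal{D}$.

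Next I would record the geometry. The arc $\overline{BB'}$ is a geodesic segment of length $16D$, hence an isometric path, and it is convex; because $\mathcal{D}$ is $\cat(-1)$, the nearest-point projection $\pi\colon\mathcal{D}\to\overline{BB'}$ is $1$-Lipschitz. Thus the robber's projection $\pi(R^{k})$ advances by at most $\tau(k)$ per round, exactly matching the cop's per-round agility. For the starting configuration, recall that $h\perp g_0$ at $Q:=g_0\cap h$. For any point $z\in h$ the triangle $R^{t_i}Qz$ has a right angle at $Q$, so the hyperbolic Pythagorean theorem gives $\cosh d(R^{t_i},z)=\cosh d(R^{t_i},Q)\,\cosh d(Q,z)$, which is minimized at $z=Q$; hence $\pi(R^{t_i})=Q$, the midpoint of $\overline{BB'}$. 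Since $C_2^{t_i}$ lies on $h$ and its distance to $Q$ equals its perpendicular distance to $g_0$, which is at most $D$ (as $d(C_2^{t_i},P)\leq D$ and $P\in g_0$), the cop starts within distance $D$ of the robber's projection and comfortably inside $\overline{BB'}$.

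With these in hand I would run the chasing argument. Parameterizing $\overline{BB'}$ by arclength, cop $c_2$ keeps its lift at the point of $\overline{BB'}$ nearest $\pi(R^{k})$, moving toward that target at full speed each round. Because $\pi(R^{k})$ is confined to the bounded arc and moves by at most $\tau(k)$ while the cop moves by $\tau(k)$, the robber can prevent the gap from closing only by driving the projection toward an endpoint of $\overline{BB'}$; once the projection saturates there, the cop closes the remaining gap. This is exactly the adjustment phase of Lemma~\ref{lem:guardingpath}, whose cost is at most $\ell(\overline{BB'})=16D$ of travel (and in fact at most the initial gap $D$ here). As $\sum_{k=t_i}^{t_{i+1}-1}\tau(k)\geq 32D\geq 16D$, the adjustment finishes within the block, after which the cop's lift stays exactly at $\pi(R^{k})$. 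Whenever $R^{k}\in\overline{BB'}$ we have $\pi(R^{k})=R^{k}$, so the cop's lift coincides with the robber's lift and, projecting to $S$, cop $c_2$ catches the robber; thus $c_2$ guards $\overline{BB'}$.

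The step needing the most care is the passage between $S$ and $\mathcal{D}$: one must verify that projecting the guarding trajectory yields a legal strategy on $S$ (handled by the coherence $d(C_2^{k-1},C_2^{k})\leq\tau(k)$ together with $p$ being a local isometry) and that catching the lift is a genuine capture on $S$. The remaining inputs are standard: that nearest-point projection onto the convex arc $\overline{BB'}$ is $1$-Lipschitz in $\mathcal{D}$ follows from $\cat(-1)$ (indeed $\cat(0)$) geometry, and $\pi(R^{t_i})=Q$ follows from the hyperbolic Pythagorean theorem as above.
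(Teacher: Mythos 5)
Your proof is correct and follows essentially the same route as the paper: you establish the content of Claim~\ref{cl:intersectionisfar} (the lift $C_2^{t_i}$ lies on $h$ within distance $D$ of $Q=g_0\cap h$, while the robber is at distance at least $9D$ from every point of $h$) and then reduce guarding $\overline{BB'}$ to the strategy of Lemma~\ref{lem:guardingpath}, your nearest-point-projection shadow being the same guarding strategy in this perpendicular configuration, with the covering-space bookkeeping matching the paper's setup. The only slip is the parenthetical claim that the adjustment costs at most the initial gap $D$ (the projection can flee toward an endpoint, so catching up can cost on the order of $9D$ of travel), but this is harmless since your stated bound $\ell(\overline{BB'})=16D$ together with the budget $32D$ already suffices.
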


\begin{proof}
First we show that the cop $c_2$ can reach $h \cap g_0$ much faster than the robber.
\begin{claim}
The point $h\cap g_0$ is at distance between $9D$ and $11D$ from $R^{t_i}$ and at distance at most $D$ from $C_2^{t_i}$.
     \label{cl:intersectionisfar}
\end{claim}
\begin{myproof}
 Note that the closest point from $C_2^{t_i}$ to $g_0$ is $h\cap g_0$ since $h$ and $g_0$ are orthogonal. Hence
$d(h\cap g_0,C_2^{t_i}) \leq d(P, C_2^{t_i}) \leq D$.

The closest point from $P$ to $h$ is also $h\cap g_0$ since  $P\in g_0$. In particular it holds that $d(h\cap g_0,P) \leq d(h\cap g_0, C_2^{t_i}) \leq D$. The distance from $R^{t_i}$ to $P$ is $10D$, so by the triangle inequality, robber's distance to $h\cap g_0$
is between $9D$ and $11D$. This proves the claim.
\end{myproof}

 Note that by Claim~\ref{cl:intersectionisfar} the distance from $R^{t_i}$ to $h$ is at least $9D$.  The cop $c_2$ can guard the path from $B$ to $B'$ on $h$, since her distance to $g_0\cap h$ is at most $D$, and hence her distance to 
 any point on $\overline{BB'}$ is at most $9D$, which is less than the distance from $R^{t_i}$ to that point. This means that $c_2$ can use the strategy from the proof of Lemma \ref{lem:guardingpath} to guard the isometric path $\overline{BB'}$. This completes the proof of Lemma~\ref{lem:c2-guards}.
 \end{proof}

 \subsubsection{The strategy of cop $c_1$}
 
 For $k\geq t_i$, let $g^k=o_{h}(C_1^k)$ (the orthogonal geodesic to $h$ through $C_1^k$) and let $B^k=g^k \cap h$, see Figure~\ref{fig:descriptionBk}. 

\begin{figure}[h!]
    \centering
\begin{tikzpicture}[ line cap=round,line join=round,>=triangle 45,x=5cm,y=5cm]
\clip(-0.5,-0.1) rectangle (0.6,0.6524311176419754);
\draw [shift={(-1.0945484369206189,1.6269683302302145)},line width=0.8pt]  plot[domain=5.131865244901581:5.5584103858721665,variable=\t]({1*1.6867312258737188*cos(\t r)+0*1.6867312258737188*sin(\t r)},{0*1.6867312258737188*cos(\t r)+1*1.6867312258737188*sin(\t r)});
\draw [shift={(4.6245020429414145,-0.2641700480698219)},line width=0.8pt]  plot[domain=2.8669299463564784:3.3021313904472964,variable=\t]({1*4.522809410031173*cos(\t r)+0*4.522809410031173*sin(\t r)},{0*4.522809410031173*cos(\t r)+1*4.522809410031173*sin(\t r)});
\draw [shift={(0.8263738222534689,10.68087563242184)},line width=0.8pt]  plot[domain=4.5416909151405:4.728655733763017,variable=\t]({1*10.666020718588936*cos(\t r)+0*10.666020718588936*sin(\t r)},{0*10.666020718588936*cos(\t r)+1*10.666020718588936*sin(\t r)});
\begin{scriptsize}
\draw [fill=qqqqff] (-0.407572866037683,0.08647269786135825) circle (2.5pt);
\draw[color=qqqqff] (-0.37897752174736895,0.025) node {$C_1^k$};
\draw [fill=qqqqff] (0.16822063622937605,0.508721266190536) circle (2.5pt);
\draw[color=qqqqff] (0.21211959480786763,0.5450758876343129) node {$B$};
\draw [fill=qqqqff] (0.11185258814678768,0.038814865099605154) circle (2.5pt);
\draw[color=qqqqff] (0.1609042557216434,-0.01) node {$B^k$};
\draw[line width=0.8pt] (0.11185258814678768,0.038814865099605154)+(90:0.5cm) arc[start angle=85, end angle=175, radius=0.5cm];
\draw[fill=black] (0.07185258814678768,0.078814865099605154)  circle (1.2pt);
\draw[color=black] (0.14863264232841333,0.17868923109440066) node {$h$};
\draw[color=black] (-0.05031925181422699,0.010080177970841217) node {$g^k$};
\draw [fill=qqqqff] (-0.18553271925832648,0.1229642031145643) circle (2.5pt);
\draw[color=qqqqff] (-0.12835759952978758,0.16) node {$R^k$};
\end{scriptsize}
\end{tikzpicture}
    \caption{A schematic picture of cop's position $c_1$ and robber's position $r$ at time step $k$. }
    \label{fig:descriptionBk}
\end{figure}
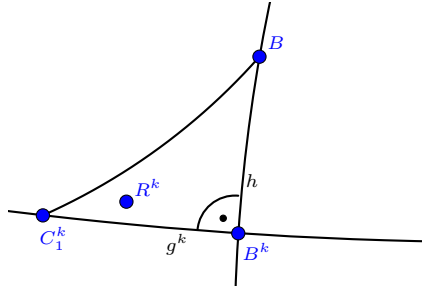

We will assign a strategy to cop $c_1$ that will force the robber $r$ to cross the isometric path $\overline{C_1^kB}$ (or the symmetric case $\overline{C_1^kB'}$) for some $k$. Note that the robber cannot cross $\overline{BB'}$ since it is guarded by cop $c_2$. Since in round $k$ the robber makes his turn before the cop, the robber crosses $\overline{C_1^kB}$ (or $\overline{C_1^kB'}$) in step $k+1$.

In the following the strategy of the cop $c_1$ will be such that the best strategy for the robber $r$ is to cross $\overline{C_1^kB}$ (or $\overline{C_1^kB'}$) somewhere close to $B$ (or $B'$). 
We say two points $X,Y$ \emph{are on opposite sides} of the geodesic $g$ if $X,Y$ are in distinct connected components of the Poincaré disk $\mathcal{D}\setminus g$. If two points $X,Y$ are not on opposite sides, we say they \emph{are on the same side}.

Suppose $R^k,R^{k+1}$ are on the same side as $B$ with respect to  $g^k$. We denote $x=o_{h}(R^{k+1}) \cap \overline{C_1^kB}$. 
Then the position $C_1^{k+1}$ can be chosen to be
\begin{itemize}
    \item[(a)] on $\overline{C_1^{k}B}$ with $d(C_1^{k+1},C_1^{k})=\tau(k+1)$ if $d(x,C_1^k)\geq \tau(k+1)$, or
    \item[(b)] on $o_{h}(R^{k+1})$, as close to the point $R^{k+1}$ as possible, otherwise. 
\end{itemize}
This strategy ensures the following condition:
\begin{equation} 
\label{stmt:coplower}
  \mbox{The robber's position $R^{k+1}$ and $B$ are on the same side of $g^{k+1}$.}
\end{equation}

\begin{observation}
    \label{obs:cop-moves-up}
    There is a strategy for the cop $c_1$, such that for every time step $t_i \leq k \leq t_{i+1}-1$, while the robber is contained in the triangle $BB^kC_1^k$ we can assume the robber's position $R^{k+1}$ is on the same side of the geodesic $g^k$ as $R^k$ by subdividing each step at most once.
\end{observation}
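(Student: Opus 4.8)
The plan is to exploit that in the hyperbolic plane two distinct geodesics meet in at most one point. I may assume the robber's move in round $k+1$ runs along the geodesic segment from $R^k$ to $R^{k+1}$, since only its endpoint enters the claim; this segment meets $g^k$ in at most one point. If it does not meet $g^k$, then $R^k$ and $R^{k+1}$ lie on the same side of $g^k$ and there is nothing to prove. Otherwise let $Q$ be the unique crossing point. Because the robber lies in the triangle $BB^kC_1^k$, whose only side contained in $g^k$ is $\overline{B^kC_1^k}$, the point $Q$ lies on $\overline{B^kC_1^k}$.

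I would then subdivide the robber's move at $Q$ into two consecutive moves $R^k\to Q$ and $Q\to R^{k+1}$, inserting one extra cop turn as soon as the robber reaches $Q$. Since the two sub-moves together have the length of the original move, each has length at most $\tau(k+1)$ and the agility function is respected; the extra cop turn only enlarges the cop's options and so cannot benefit the robber. As a single geodesic move is split at one point, every original step is subdivided at most once, so the number of rounds in the block $t_i,\dots,t_{i+1}$ at most doubles, which is absorbed into the constant $32D$ used to fix $t_{i+1}$.

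It remains to verify that after the subdivision the two endpoints of each sub-move lie on one side of the prevailing geodesic. In the first sub-move the endpoints are $R^k$, on the $B$-side of $g^k$, and $Q\in g^k$, which both lie in the closed $B$-side. For the inserted cop turn, observe that $Q\in g^k$ and $g^k\perp h$ give $o_h(Q)=g^k$, hence $o_h(Q)\cap\overline{C_1^kB}=C_1^k$ and the cop's rule falls into case~(b): she moves along $g^k$ toward $Q$. Her new position again lies on $g^k$, so the updated orthogonal geodesic is still $g^k$. Consequently, in the second sub-move the endpoints are $Q\in g^k$ and $R^{k+1}$ on the non-$B$-side, which both lie in the closed non-$B$-side. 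In each sub-move the two endpoints thus lie on the same side of the current geodesic, which is exactly the claim of the observation.

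The step I expect to be the main obstacle is the game-theoretic bookkeeping rather than the geometry: one must check that the inserted cop turn is consistent with the strategy of cases~(a) and~(b) and preserves condition~\eqref{stmt:coplower}, and that the per-interval distance decrease promised by Lemma~\ref{prop:epsilon} survives the at-most-doubling of the number of rounds. The geometric inputs—one crossing per geodesic move, and $o_h(Q)=g^k$ whenever $Q\in g^k$—are immediate.
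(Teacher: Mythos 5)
Your subdivision at the crossing point $Q$ and your handling of the inserted cop turn match the paper's first substep exactly (the paper simply posits that $c_1$ moves along $g^k$ toward the robber; you derive the same move from rule (b) via $o_h(Q)=g^k$, which is a nice check). The genuine gap is in the second substep, and it is exactly the step you set aside as ``game-theoretic bookkeeping'': the paper's proof rests on a reflection trick that you omit. There, the cop imagines the robber at $\phi(R^{k+1})$, the mirror image of $R^{k+1}$ across $g^k$, computes her response by rules (a)/(b) against this virtual robber, and reflects the computed position back across $g^k$. Since $\phi$ is an isometry fixing $g^k$, the actual game is isometric to a virtual game in which the robber never leaves the closed $B$-side; that is the operative content of ``we can assume $R^{k+1}$ is on the same side as $R^k$,'' and it is what every later step consumes: rules (a)/(b) and condition~\eqref{stmt:coplower} are only defined for a robber on the $B$-side (for a robber strictly on the $B'$-side, $o_h(R^{k+1})$ does not meet $\overline{C_1^kB}$ at all), the potential $d(C_1^k,B)+d(C_1^k,h)$ decreasing by $\tau(k)$ requires a fixed target $B$, and Lemma~\ref{lem:crossing} is set up in the triangle $C_1^{t-1}BB^{t-1}$. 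Your version instead lets the robber genuinely migrate to the $B'$-side and verifies the Observation only in a degenerate sense: because $Q\in g^k$, each substep's endpoints lie on a common \emph{closed} side no matter what the cop does, so under your reading the statement holds vacuously while failing to deliver the reduction it is used for. Note that ``same side'' through a boundary point is not transitive --- $R^k$ and $R^{k+1}$ end up on strictly opposite sides --- and if the robber flips repeatedly, each flip swaps the cop's target between $B$ and $B'$, changing her distance to the target by roughly $d(B^k,H)$, an amount not controlled by the robber's step length; without the reflection, the $32D$ budget argument in the lemma following the Observation collapses.

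By contrast, the one obstacle you flag concretely --- that doubling the number of rounds might erode the per-interval decrease of Lemma~\ref{prop:epsilon} --- is a non-issue: $t_{i+1}$ is defined by cumulative agility $\sum\tau(k)\geq 32D$, and splitting a move of length at most $\tau(k+1)$ into submoves of lengths $d(R^k,Q)$ and $d(Q,R^{k+1})$ preserves total agility, so no constant needs absorbing. (Your subdivision itself is sound as an accounting device, since $Q$ is computable from $R^k$ and $R^{k+1}$, so the cop needs no extra information.) What is missing is the reflection, which is the key idea of the paper's proof rather than routine bookkeeping.
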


\begin{proof}
    By symmetry we can assume that the first step of the robber away from the geodesic $g_0$ is towards $B$. Now, suppose that the robber crosses $g^k$ by going from step $R^k$ to step $R^{k+1}$, see Figure~\ref{fig:reflection}. By splitting each such step into two substeps we may consider the first substep as above, initial and ending position on the same side as $B$, and the second substep with initial and ending position on the same side as $B'$. In the first substep the cop $c_1$ moves along $g^k$ towards the robber. In the second substep the cop imagines the robber to move to $\phi(R^{k+1})$, which is the reflection of the position $R^{k+1}$ along $g^k$, and determines his position $\phi(C_1^{k+1})$ by using strategy $(a)$ or $(b)$. She then reflects the position $\phi(C_1^{k+1})$ along $g^k$ to obtain $C_1^{k+1}$. After subdividing each of the robber's steps at most once, by symmetry we can assume the robber does not cross $g^k$, which means the position $R^{k+1}$ is on the same side of the geodesic $g^k$ as $B$.
\begin{figure}[htb]
\includegraphics[viewport=185 530 425 680,clip]{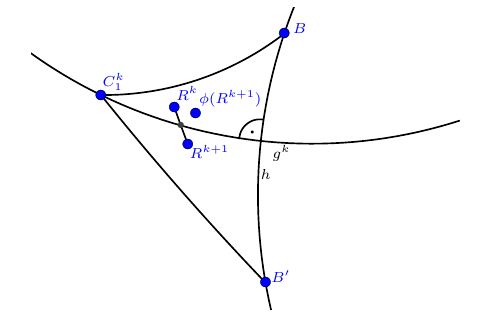}
    \caption{The point $\phi(R^{k+1})$ is the reflection point of $R^{k+1}$ over $g^k$.}
    \label{fig:reflection}
\end{figure}
\end{proof}

Recall that in round $k$ the robber makes a step before the cops make their step. We let $t$ be the first round at which the robber crosses $\overline{C_1^{t-1}B}$ (or $\overline{C_1^{t-1}B'}$). We will assume without loss of generality that $R^t=x\in \overline{C_1^{t-1}B}$ (or $\overline{C_1^{t-1}B'}$) by subdividing the step and letting the agility at time step $t$ be $d(x,R^{t-1})$. From step $t$ until step $t_{i+1}$, the strategy of cop $c_1$ is to simply follow the robber, while the strategy of cop $c_2$ is to stay at the same position. In particular, $R^t \in \overline{C_1^tB} = \overline{C_1^{t-1}B}$ (or $R^t \in \overline{C_1^tB'}$).

\subsubsection{Proof of Lemma~\ref{prop:epsilon}}

We start by showing that the robber crosses $\overline{C_1^tB}$.

\begin{lemma}
    If the robber does not get caught by the cops, then  $R^t \in \overline{C_1^{t} B}$ (or $R^t \in \overline{C_1^tB'}$) for some $t$ with $t_i<t < t_{i+1}$. In particular,
     $\sum_{k=t_i}^t \tau(k) < 32D$.
\end{lemma}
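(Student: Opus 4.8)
The plan is to argue by confinement: as long as the robber avoids crossing $\overline{C_1^k B}$ (and, symmetrically, $\overline{C_1^k B'}$), he stays trapped in a triangular region that cop $c_1$ shrinks to a point, and this collapse is forced well within the agility budget $32D$. So I would assume for contradiction that no such crossing occurs for any $t_i < t < t_{i+1}$ and derive that the robber gets caught.

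First I would fix the confinement region. By Observation~\ref{obs:cop-moves-up}, after subdividing steps when necessary, I may assume without loss of generality that the robber always stays on the $B$-side of $g^k$, so that $R^k$ lies in the triangle with vertices $C_1^k$, $B^k = g^k \cap h$ and $B$. Indeed, the robber cannot cross $\overline{BB'}$ since it is guarded by $c_2$ (Lemma~\ref{lem:c2-guards}); by the side assumption he does not cross $g^k$ towards the $B'$-side; and by the contradiction hypothesis he does not cross $\overline{C_1^k B}$. Hence $R^k$ remains inside this triangle for all $k$ in the range.

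Next I would show that the cop advances monotonically towards $B$, so that the triangle genuinely shrinks. In strategy (a) the cop moves distance $\tau(k+1)$ along $\overline{C_1^k B}$ towards $B$, so $d(C_1^{k+1},B) = d(C_1^k,B) - \tau(k+1)$; moreover, since $B \in h$, convexity of the distance-to-$h$ function along the geodesic $\overline{C_1^k B}$ (quantified via the trigonometric estimates, in particular Lemma~\ref{lem:mean-value}) guarantees that $d(C_1^k,h)$ decreases and that the foot $B^{k+1}$ moves strictly from $B^k$ towards $B$ along $h$. In strategy (b), the catch-up mode triggered only when the robber's orthogonal projection onto $\overline{C_1^k B}$ is within $\tau(k+1)$ of $C_1^k$, the cop moves onto $o_h(R^{k+1})$; since $R^{k+1}$ is on the $B$-side of $g^k$ its foot again lies between $B^k$ and $B$, so the foot still advances, and I would bound the (at most $\tau(k+1)$) possible transient increase of $d(C_1^k,B)$ so that it is absorbed by the available slack.

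Finally I would quantify the budget. As $C_1^{t_i}, R^{t_i}$ and $B^{t_i}=g_0\cap h$ are collinear on $g_0$ with $R^{t_i}$ between the other two, Claim~\ref{cl:intersectionisfar} together with $d(c_1^{t_i},r^{t_i})\le D$ gives $d(C_1^{t_i},B^{t_i})\le 12D$; applying the hyperbolic Pythagorean theorem to the right triangle $C_1^{t_i}B^{t_i}B$ with legs at most $12D$ and $8D$, and using $\cosh(a)\cosh(b)\le\cosh(a+b)$, yields $d(C_1^{t_i},B)\le 20D$. If the robber never crossed $\overline{C_1^k B}$, the cop would reach $B$ after a cumulative advance of at most $20D$, collapsing the triangle to the single point $B$ and forcing the robber onto the guarded segment $\overline{BB'}$, hence caught $-$ a contradiction. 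Therefore a crossing occurs at some round $t$ whose cumulative agility is essentially this $20D$, and in particular $\sum_{k=t_i}^{t}\tau(k) < 32D \le \sum_{k=t_i}^{t_{i+1}-1}\tau(k)$, so $t_i < t < t_{i+1}$, as claimed. The main obstacle is making strategy (b) and the step-subdivisions rigorous, namely verifying that \emph{every} round $-$ not just the strategy-(a) rounds $-$ makes monotone progress towards $B$ and that the agility possibly wasted in catch-up rounds is dominated by the $12D$ of slack between $20D$ and $32D$; this is exactly where the convexity of the distance-to-geodesic function and the uniform-increase estimate of Lemma~\ref{lem:mean-value} are needed to convert geometric progress into the quantitative bound.
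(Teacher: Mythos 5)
Your overall architecture (confinement in the triangle $C_1^kB^kB$, progress towards $B$, budget $32D = 20D + 12D$) matches the paper's, and your bound $d(C_1^{t_i},B)\le 20D$ via Claim~\ref{cl:intersectionisfar} is fine. But the step you yourself flag as ``the main obstacle'' is a genuine gap, and your proposed way around it does not work. You plan to show that strategy-$(a)$ rounds advance $d(C_1^k,B)$ at full rate $\tau(k)$ and then to ``absorb'' the shortfall of the strategy-$(b)$ rounds in the $12D$ of slack between $20D$ and $32D$. Nothing in your argument bounds the \emph{cumulative} shortfall: the robber controls how many $(b)$-rounds occur and how much agility each consumes, so a per-round bound of $\tau(k+1)$ on the ``waste'' gives no bound on the total, and the budget argument does not close. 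Moreover, the tool you invoke for this, Lemma~\ref{lem:mean-value}, is irrelevant here: it is a uniform lower bound on the increments of $x\mapsto\acosh(w\cosh(x))$ and is used in the paper only in the proof of Lemma~\ref{lem:crossing} to extract the constant $\delta=\eta\varepsilon$; it says nothing about convexity of the distance-to-geodesic function or about the efficiency of $(b)$-moves. (A smaller inaccuracy: $d(C_1^k,B)$ never increases in a $(b)$-round at all — both legs of the move decrease it, by hyperbolic Pythagoras for the descent leg — the issue is only that it decreases by \emph{less} than $\tau$.)

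The paper closes exactly this gap with a two-term potential that your ``slack'' intuition is secretly reaching for: set $\Phi(k)=d(C_1^k,B)+d(C_1^k,h)$ and show $\Phi(k)\le\Phi(k-1)-\tau(k)$ in \emph{every} round, by the triangle inequality on $C_1^{k-1}$, $C_1^{k}$ and $y=g^k\cap\overline{C_1^{k-1}B}$. In an $(a)$-move the first term drops by exactly $\tau(k)$ while the second does not increase (distance to the geodesic $h$ is convex along $\overline{C_1^{k-1}B}$ and vanishes at the endpoint $B\in h$, hence is non-increasing). In a $(b)$-move, split the step at $y$: the leg along $\overline{C_1^{k-1}B}$ is charged to $d(\cdot,B)$, and the descent along $g^k$ is charged at full rate to $d(\cdot,h)$, while neither leg increases the other term. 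Since $\Phi(t_i)\le 20D+12D=32D$ and $\Phi\ge 0$, the total agility spent before the crossing is less than $32D$, or the robber is caught — and because the $d(\cdot,h)$ term starts at $\le 12D$ and never increases, this is precisely the rigorous version of ``the descent agility is bounded by $12D$'' that your proposal asserts without proof. With this accounting in place of your appeal to Lemma~\ref{lem:mean-value}, your argument becomes the paper's proof.
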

\begin{proof}
By Observation~\ref{obs:cop-moves-up} we can assume that at step $k$, the cop $c_1$ is moving closer to $B$ and $h$, and by the triangle inequality on the triangle defined by $C_1^{k-1},C_1^{k}$ and $g^k\cap \overline{C_1^{k-1}B}$, she decreases the total distance to $B$ and to $h$ by at least $\tau(k)$, i.e., 
$$
  d(C_1^k,B) + d(C_1^k,h)  \le d(C_1^{k-1},B) + d(C_1^{k-1},h) - \tau(k).
$$

By Claim~\ref{cl:intersectionisfar}, $d(C_1^{t_i},g_0\cap h)\leq 12D$ and by the triangle inequality it holds that $d(C_1^{t_i},B)\leq d(C_1^{t_i},g_0\cap h)+d(g_0 \cap h,B)\leq 12D+8D=20D$. Hence the cop either meets $B$ or $h$ after moving for time $32D$, thus catching the robber. 
\end{proof}

Lemma~\ref{prop:epsilon} follows directly from the following result.

\begin{lemma}
\label{lem:crossing}
  For each $\varepsilon>0$ there exists some $\delta>0$, such that if $d(R^{t_i},C_1^{t_i})\geq\varepsilon$, and at step $t$ $(t_i<t<t_{i+1})$ $R^t\in \overline{C_1^t B}$, then
\begin{align*}
    d(R^t,C_1^t)\leq d(R^{t_i},C_1^{t_i}) - \delta.
\end{align*}
\end{lemma}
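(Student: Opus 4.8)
The plan is to track the single quantity $\Phi^k := d(C_1^k,B)-d(R^k,B)$ and prove two facts: that $\Phi^{t_i}$ is already strictly smaller than $d(R^{t_i},C_1^{t_i})$ by a definite amount, and that $\Phi^k$ does not increase while $t_i\le k\le t$. This suffices because of the \emph{crossing identity}: since $R^t$ lies on the isometric path $\overline{C_1^tB}$, the point $R^t$ is between $C_1^t$ and $B$, so $d(C_1^t,B)=d(C_1^t,R^t)+d(R^t,B)$ and hence
\[
 d(R^t,C_1^t)=d(C_1^t,B)-d(R^t,B)=\Phi^t .
\]
Thus $\Phi^t\le\Phi^{t_i}\le d(R^{t_i},C_1^{t_i})-\delta$ gives the claim, with the same $\delta$ serving for Lemma~\ref{prop:epsilon}. (The symmetric case $R^t\in\overline{C_1^tB'}$ is identical with $B'$ in place of $B$.)

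First I would establish the initial gap. At time $t_i$ both $C_1^{t_i}$ and $R^{t_i}$ lie on $g_0$, which meets $h$ orthogonally at $O:=g_0\cap h$; hence $O$ is the common foot of the perpendiculars from both points to $h$, while $B\in h$ is at distance $8D$ from $O$. Writing $a:=d(R^{t_i},C_1^{t_i})\in[\varepsilon,D]$ and $s:=d(R^{t_i},O)$, Claim~\ref{cl:intersectionisfar} gives $s\in[9D,11D]$, and the robber sits between $C_1^{t_i}$ and $O$, so $d(C_1^{t_i},O)=a+s$. Applying the hyperbolic Pythagorean theorem to the right triangles $C_1^{t_i}OB$ and $R^{t_i}OB$ yields
\[
 \Phi^{t_i}=f(a+s)-f(s),\qquad f(u):=\acosh\!\big(\cosh u\,\cosh 8D\big).
\]
Since $\cosh 8D>1$, a direct computation gives $f'(u)<1$ for every $u$, so $\Phi^{t_i}<a$; because the relevant arguments range over the compact interval $[9D,12D]$ bounded away from $0$, the derivative there is at most some $1-c<1$, whence $\Phi^{t_i}\le a-ca\le a-c\varepsilon$. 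This is a quantitative form of the strict triangle inequality, of the flavour encoded by Lemmas~\ref{lem:closertogether} and~\ref{lem:shifttoright}, and I would set $\delta:=c\varepsilon$. Intuitively it is the cop's ``shortcut'': detouring to $B$ is strictly cheaper from $C_1^{t_i}$ than routing through $R^{t_i}$.

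Next I would show that $\Phi^k$ is non-increasing. By Observation~\ref{obs:cop-moves-up} (subdividing steps and reflecting across $g^k$ whenever the robber would cross it) I may assume at each step that $R^{k+1}$ stays on the same side of $g^k$ as $B$, consistent with~\eqref{stmt:coplower}, so it suffices to treat a non-crossing step. In the generic case~(a) the cop moves distance $\tau(k+1)$ along $\overline{C_1^kB}$ toward $B$, so $d(C_1^{k+1},B)=d(C_1^k,B)-\tau(k+1)$, while the robber moves at most $\tau(k+1)$, giving $d(R^{k+1},B)\ge d(R^k,B)-\tau(k+1)$; subtracting yields $\Phi^{k+1}\le\Phi^k$. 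The crossing step at time $t$, after the subdivision that makes $R^t$ land exactly on $\overline{C_1^{t-1}B}$, is one more instance of case~(a). The remaining case~(b), where the cop instead settles onto the perpendicular $o_h(R^{k+1})$, is where the real work lies: there the cop no longer advances the full $\tau(k+1)$ toward $B$, and to see that $\Phi$ still does not grow I would compare the resulting right-triangle configuration with the previous one via Lemmas~\ref{lem:closertogether}, \ref{lem:shifttoright} and~\ref{lem:intuition}, using Lemma~\ref{lem:mean-value} to control, uniformly, how fast $d(\cdot,B)$ grows with distance to $h$.

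The main obstacle is precisely this monotonicity in the non-generic steps: keeping the reflection bookkeeping consistent with the isometry it relies on, and verifying that case~(b) and the transition into ``follow the robber'' at step $t$ cannot let the robber recover the gap he has lost, uniformly over all admissible positions of $R^k,C_1^k$ inside the triangle $BB^kC_1^k$. Once $\Phi^t\le\Phi^{t_i}$ is secured, combining it with the initial gap gives $d(R^t,C_1^t)=\Phi^t\le d(R^{t_i},C_1^{t_i})-\delta$ with $\delta=c\varepsilon$ depending only on $\varepsilon$ and $D=\diam(S)$, as required.
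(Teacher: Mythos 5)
Your skeleton — the potential $\Phi^k=d(C_1^k,B)-d(R^k,B)$, the crossing identity $d(R^t,C_1^t)=\Phi^t$, and the initial-gap computation via the hyperbolic Pythagorean theorem with $f(u)=\acosh(\cosh u\cosh 8D)$ and $f'<1$ on a compact range — matches the paper's endgame exactly: your derivative bound $f'\le 1-c$ on $[9D,12D]$ is precisely Lemma~\ref{lem:mean-value}, and your case~(a) monotonicity argument is the same collapsing inequality the paper uses. But the one step you leave as a sketch, monotonicity of $\Phi$ through type~(b) steps, is not a verification you postponed; it is the mathematical content of the lemma, and the paper does not prove it either. Instead of per-step monotonicity along the actual trajectory, the paper runs an extremality argument: it defines $g(t)$ as the maximum of $d(C_1^t,R^t)$ over all admissible robber trajectories crossing at step $t$, shows the last cop step on a maximizer is type~(a) (a type~(b) last step means capture, Observation~\ref{obs:laststep}), collapses consecutive type~(a) steps into one (Claim~\ref{clm:if-a}, by re-parametrizing $\tau$), and rules out a type~(b) second-to-last step on a maximizer (Claim~\ref{clm:not-b}) via the Lambert-quadrilateral comparison, the sine formula for right triangles, and Lemma~\ref{lem:intuition}, followed by Lemmas~\ref{lem:shifttoright} and~\ref{lem:closertogether}. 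This induction reduces to $t=t_i+1$, where both players still sit on $g_0$ and your explicit computation applies. Note the conclusion of Claim~\ref{clm:not-b} is \emph{not} that $\Phi$ decreases across a (b)-step; it is that on a maximizing trajectory the robber would have done strictly better moving to $z_r$, a contradiction with maximality — a statement comparing alternative robber choices, not consecutive values of $\Phi$ along one play. Whether your stronger per-step claim even holds would require redoing essentially that entire comparison, which you have not supplied.

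A second, smaller defect in your plan: the reflection bookkeeping of Observation~\ref{obs:cop-moves-up} is not compatible with a potential anchored at the fixed point $B$. The reflection $\phi$ across $g^k$ fixes $B^k=g^k\cap h$ and preserves $h$, but it sends $B$ to a point at distance $d(B,B^k)$ on the other side of $B^k$, which is neither $B$ nor $B'$ in general (since $B^k\neq g_0\cap h$). So after a reflected substep your quantity $\Phi$ changes reference point and the chain $\Phi^{t_i}\ge\Phi^{t_i+1}\ge\cdots\ge\Phi^t$ breaks. The paper's formulation avoids this because the extremal argument only ever compares terminal configurations under the WLOG that the robber stays on one side. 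As written, your proposal proves the easy half (initial gap plus type~(a) steps) and names, but does not close, the hard half.
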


The rest of this section is concerned with proving Lemma~\ref{lem:crossing}. The proof is technical and uses Lemmas~\ref{lem:closertogether}--\ref{lem:intuition}. 

\begin{proof}[Proof of Lemma~\ref{lem:crossing}]
Assume that the robber does not get caught by step $t_{i+1}$. Let us consider the right triangle on vertices $C_1^{t-1}$, $B^{t-1}$ and $B$; recall that $B^k=g^k \cap h$, see Figure~\ref{fig:descriptionBk}. We define a function $f \colon \overline{C_1^{t-1}B} \to \mathbb{R}$ with
$$f(x)=d(C_1^{t-1},x)-d(R^{t-1},x).$$
If $R^t=x$, then $d(C_1^t,R^t) \leq f(x)$. Note that by the triangle inequality it holds that $f(x)< d(C_1^{t-1},R^{t-1})$ since $x$, $C_1^{t-1}$ and $R^{t-1}$ are not on a common geodesic. In order to maximise his distance to cop $c_1$, we show that the best choice for the robber would be to move to $B$ in his last step (we ignore that the robber would be caught by $c_2$).

\begin{claim} 
Given fixed positions $C_1^{t-1}$ and $R^{t-1}$ such that $R^{t-1}$ is in the triangle defined by $C_1^{t-1}, B,B^{t-1}$, it holds that
    $$f(B)= \max_{x \in \overline{C_1^{t-1}B}} f(x).$$
    \label{cl:fmax}
\end{claim}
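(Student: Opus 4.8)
The plan is to reduce Claim~\ref{cl:fmax} to the triangle inequality, using only that every competing point $x$ lies on the geodesic segment $\overline{C_1^{t-1}B}$. The one structural fact I need is that, because $x\in\overline{C_1^{t-1}B}$ and geodesics in the Poincar\'e disk are unique, the point $x$ splits this isometric path additively, so that
\[
  d(C_1^{t-1},B)=d(C_1^{t-1},x)+d(x,B).
\]
This identity is what lets me collapse the $c_1$-part of $f(B)-f(x)$ into the single term $d(x,B)$, after which the $R^{t-1}$-part is controlled by a single triangle inequality.

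Concretely, I would expand the difference of the two values of $f$ and substitute the identity above. First,
\[
  f(B)-f(x)=\bigl(d(C_1^{t-1},B)-d(C_1^{t-1},x)\bigr)-\bigl(d(R^{t-1},B)-d(R^{t-1},x)\bigr),
\]
and replacing the first bracket by $d(x,B)$ gives
\[
  f(B)-f(x)=d(x,B)-\bigl(d(R^{t-1},B)-d(R^{t-1},x)\bigr).
\]
Now applying the triangle inequality to the triple $R^{t-1},x,B$ yields $d(R^{t-1},B)\le d(R^{t-1},x)+d(x,B)$, hence $d(R^{t-1},B)-d(R^{t-1},x)\le d(x,B)$. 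Substituting this bound gives $f(B)-f(x)\ge 0$ for every $x\in\overline{C_1^{t-1}B}$, which is exactly the assertion $f(B)=\max_x f(x)$.

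The point I want to stress is that this step requires no hyperbolic trigonometry: the fact that $f$ is maximized at the endpoint $B$ is a purely metric phenomenon valid in any geodesic space, and the additive splitting of $\overline{C_1^{t-1}B}$ is the only geometric input, so I do not anticipate a genuine obstacle. The hypothesis that $R^{t-1}$ lies inside the right triangle $C_1^{t-1}B^{t-1}B$ is not actually needed to get the inequality $f(B)\ge f(x)$; it serves only to guarantee that $R^{t-1}$ is off the geodesic line through $C_1^{t-1}$ and $B$, which makes the triangle inequality strict at interior points and renders $B$ the \emph{unique} maximizer (equality would force $x\in\overline{R^{t-1}B}$). The real difficulty in Lemma~\ref{lem:crossing}, where Lemmas~\ref{lem:closertogether}--\ref{lem:intuition} enter, lies not in locating the maximizer but in the subsequent task of estimating $f(B)$ and showing that it falls below $d(R^{t_i},C_1^{t_i})$ by a fixed amount $\delta$.
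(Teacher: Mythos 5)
Your proof is correct and takes essentially the same route as the paper's: both rely on the additive splitting $d(C_1^{t-1},B)=d(C_1^{t-1},x)+d(x,B)$ along the isometric segment $\overline{C_1^{t-1}B}$, followed by the triangle inequality applied to $x$, $B$, $R^{t-1}$ to bound $d(R^{t-1},B)-d(R^{t-1},x)$ by $d(x,B)$. Your side remark that the hypothesis on $R^{t-1}$ is not needed for the inequality itself is accurate and consistent with the paper's argument.
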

\begin{myproof}
Suppose $x\in \overline{C_1^{t-1}B}$ and $x\neq B$. We consider the triangle defined by $x,B,R^{t-1}$. Since $x$ is on a common geodesic with $C_1^{t-1}$ and $B$,
\begin{align*}
    f(B)& =d(C_1^{t-1},B)-d(R^{t-1},B) \\
    & =(d(C_1^{t-1},x)+d(x,B))-(d(R^{t-1},B)-d(R^{t-1},x)) -d(R^{t-1},x).
\end{align*}
By the triangle inequality, $d(x,B)\geq d(R^{t-1},B)-d(R^{t-1},x)$, hence $f(B)$ is at least $f(x)=d(C_1^{t-1},x)-d(R^{t-1},x)$.
\end{myproof}

 Let $R^{t_i},\dots,R^t$ be a sequence of $t-t_i$ steps such that $R^t\in \overline{C_1^{t-1}B}$ and $R^k$ is in the interior of the triangle $C_1^{k-1}BB^k$ for $k=t_i,\dots,t-1$ (given the prescribed strategy of cop $c_1$). We define
\begin{align*}
g(t):=\max_{R^{t_i},\dots,R^t, \tau}d(C_1^t,R^t).
\end{align*}
Note that the maximum is well-defined since $S$ is compact and we can assume $\tau(k)\le D$. Hence let $R^{t_i},\dots,R^t$ be a sequence such that $d(C_1^t,R^t)= g(t)$. 

We show that $g(t)\leq g(t-1)$ for every $t \in \{t_i, \ldots, t_{i+1}\}$. If the last step of $c_1$ is of type $(b)$, the robber is caught, and $0=g(t)\leq g(t-1)$. Thus the following holds.
\begin{observation}
    \label{obs:laststep}
    The last step of the cop $c_1$ is of type $(a)$.
\end{observation}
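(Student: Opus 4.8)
The plan is to prove the observation as a case reduction inside the argument that $g(t)\le g(t-1)$: the cop's move at the crossing time $t$ is, by construction, either of type (a) or of type (b), so it is enough to show that a type (b) move at step $t$ already yields the desired inequality trivially, leaving type (a) as the only case that requires the trigonometric analysis. Concretely, I would show that a type (b) step at time $t$ forces $C_1^t=R^t$, i.e.\ the robber is captured, whence $g(t)=0\le g(t-1)$.

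First I would unwind the normalization made just before the statement. By subdividing the crossing step, we have arranged that $R^t=x$, where $x=o_h(R^t)\cap\overline{C_1^{t-1}B}$. In particular $R^t$ lies on its own orthogonal geodesic $o_h(R^t)$ (trivially, as every point lies on its orthogonal geodesic to $h$) and on $\overline{C_1^{t-1}B}$, and the distinction between the two move types is exactly whether $d(x,C_1^{t-1})\ge\tau(t)$ (type (a)) or $d(x,C_1^{t-1})<\tau(t)$ (type (b)).

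Next I would analyse the type (b) case. There, by definition of the move, $C_1^t$ is chosen on $o_h(R^t)$ as close to $R^t$ as the agility $\tau(t)$ allows. The point of $o_h(R^t)$ nearest to $R^t$ is $R^t$ itself, and the type (b) hypothesis gives $d(C_1^{t-1},R^t)=d(C_1^{t-1},x)<\tau(t)$, so the cop has enough budget to move all the way onto $R^t$. Hence $C_1^t=R^t$ and the robber is caught, giving $d(C_1^t,R^t)=0$ and therefore $g(t)=0\le g(t-1)$. This settles the inequality in the type (b) case and lets us assume, for the remainder of the proof of Lemma~\ref{lem:crossing}, that the cop's last step is of type (a).

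I expect no genuine obstacle here beyond carefully reading off the definitions; the only point that needs attention is that the normalization $R^t=x$ places the robber \emph{precisely} on $o_h(R^t)$, which is what allows the cop to land exactly on $R^t$ rather than merely near it. This type-(a) assumption is exactly what the subsequent estimates drawn from Lemmas~\ref{lem:closertogether}--\ref{lem:intuition} will rely upon, since those lemmas analyse how the cop's move along $\overline{C_1^{t-1}B}$ shrinks her distance to the robber.
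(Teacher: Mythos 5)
Your proof is correct and takes essentially the same approach as the paper, whose entire argument is the one-line remark that a type~(b) last step means the robber is caught, so $0=g(t)\leq g(t-1)$. You merely spell out the mechanism the paper leaves implicit: the normalization $R^t=x\in \overline{C_1^{t-1}B}$ together with the type~(b) condition $d(x,C_1^{t-1})<\tau(t)$ lets the cop move all the way onto $R^t\in o_{h}(R^t)$, which is a correct and welcome elaboration.
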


\begin{claim}
    \label{clm:if-a}
    If the second to last step is of type $(a)$, then $g(t) \leq g(t-1)$.
\end{claim}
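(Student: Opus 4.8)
The plan is to exploit the fact that a type-(a) step moves the cop along the current geodesic toward $B$. Since the last step is of type (a) by Observation~\ref{obs:laststep} and the second-to-last step is of type (a) by hypothesis, the two cop moves are collinear: because $C_1^{t-1}\in\overline{C_1^{t-2}B}$ we have $\overline{C_1^{t-1}B}\subseteq\overline{C_1^{t-2}B}$, so $C_1^{t-2},C_1^{t-1},C_1^t,B$ all lie on a single geodesic and $d(C_1^{t-2},C_1^t)=\tau(t-1)+\tau(t)$. First I would record this, as it is the structural fact that makes the case easy.

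Next I would locate $R^t$ on this geodesic. By the crossing hypothesis $R^t\in\overline{C_1^{t-1}B}\subseteq\overline{C_1^{t-2}B}$, and the type-(a) condition at step $t$ gives $d(C_1^{t-1},R^t)\ge\tau(t)=d(C_1^{t-1},C_1^t)$, so $C_1^t$ lies between $C_1^{t-1}$ and $R^t$. Reading off distances along the geodesic then yields
\[
 g(t)=d(C_1^t,R^t)=d(C_1^{t-2},R^t)-\tau(t-1)-\tau(t).
\]

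The heart of the argument is to produce a competing sequence that crosses one step earlier and is at least as good, which bounds $g(t)$ by $g(t-1)$. I would keep the first $t-2$ steps (and their agilities) unchanged, so the cop still sits at $C_1^{t-2}$ after step $t-2$, and let the robber move directly from $R^{t-2}$ to $R^t$ in a single step, i.e.\ set $\tilde R^{t-1}=R^t$ with agility $\tilde\tau(t-1)=d(R^{t-2},R^t)$. By the triangle inequality $d(R^{t-2},R^t)\le d(R^{t-2},R^{t-1})+d(R^{t-1},R^t)\le\tau(t-1)+\tau(t)$, so this is a legal move, and since $R^t\in\overline{C_1^{t-2}B}$ the robber now crosses $\overline{C_1^{t-2}B}$ at step $t-1$; as the earlier positions $R^{t_i},\dots,R^{t-2}$ still lie in the required triangles, this is a genuine crossing-at-$(t-1)$ configuration counted by $g(t-1)$. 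The cop responds with a type-(a) step to $\tilde C_1^{t-1}$, at distance $d(R^{t-2},R^t)$ from $C_1^{t-2}$ toward $B$, whence
\[
 d(\tilde C_1^{t-1},R^t)=d(C_1^{t-2},R^t)-d(R^{t-2},R^t)\ge d(C_1^{t-2},R^t)-\tau(t-1)-\tau(t)=g(t),
\]
so $g(t-1)\ge d(\tilde C_1^{t-1},R^t)\ge g(t)$, as desired.

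The only points requiring care are bookkeeping rather than geometry: checking that the type-(a) conditions are met for the new step (which follows from $d(C_1^{t-2},R^t)\ge\tau(t-1)+\tau(t)\ge\tilde\tau(t-1)$, itself a consequence of the original type-(a) conditions) and that $R^t$ indeed lies beyond $C_1^t$ on the geodesic. Notably, this case needs none of the trigonometric Lemmas~\ref{lem:closertogether}--\ref{lem:intuition}; those are reserved for the harder case in which the second-to-last step is of type (b), where the cop's two moves are no longer collinear and one must compare genuinely different hyperbolic triangles. I therefore expect no serious obstacle here, the whole point being that two consecutive type-(a) steps collapse into a single linear motion toward $B$.
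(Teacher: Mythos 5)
Your proof is correct and takes essentially the same approach as the paper: both arguments collapse the robber's last two steps into a single step of agility $\tau(t-1)+\tau(t)$, exploiting the triangle inequality and the fact that consecutive type-$(a)$ cop moves are collinear along $\overline{C_1^{t-2}B}$. The only cosmetic difference is that the paper first normalizes $R^t=B$ via Claim~\ref{cl:fmax} and argues that the maximizing sequence has $R^{t-1}\in\overline{R^{t-2}B}$, whereas you skip this normalization and compare against an explicitly constructed competing sequence counted by $g(t-1)$, arriving at the same inequality $g(t)\leq g(t-1)$.
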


\begin{myproof}
We can assume $R^t=B$ by Claim~\ref{cl:fmax}. Suppose the second last step from $C_1^{t-2}$ to $C_1^{t-1}$ is of type $(a)$. Note that the cop moves distance exactly $d(R^{t-2},R^{t-1})+d(R^{t-1},B)$ along $\overline{C_1^{t-2}B}$ and for the robber's steps it holds that $d(R^{t-2},R^{t-1})+d(R^{t-1},B)\geq d(R^{t-1},B)$. Hence by maximality of $R^{t_i}, \dots, R^t$, we can assume $R^{t-1}\in \overline{R^{t-2}B}$. But then $R^t_i, \dots,R^{t-2},R^t$ is a sequence with the same value assuming $\tau'(t-1)=\tau(t-1)+\tau(t)$, $\tau'(k)=\tau(k)$ if $k\leq t-2$ and $\tau'(k)=\tau(k-1)$ if $k\geq t$. This shows that $g(t)\leq g(t-1)$.

\end{myproof}

\begin{claim}
    \label{clm:not-b}
    The second to last step is of type $(a)$.
\end{claim}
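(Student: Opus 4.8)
The plan is to rule out a type-(b) second-to-last step by showing it can never produce the maximal gap, so that the maximizer defining $g(t)$ may be taken with a type-(a) second-to-last step; combined with Claim~\ref{clm:if-a} this yields $g(t)\le g(t-1)$. So suppose the cop's move from $C_1^{t-2}$ to $C_1^{t-1}$ is of type (b). By definition of (b) the cop lands on $o_h(R^{t-1})=g^{t-1}$ as close to $R^{t-1}$ as her budget permits, so $C_1^{t-1}$ and $R^{t-1}$ both lie on the common orthogonal geodesic $g^{t-1}$; write $d=d(C_1^{t-1},R^{t-1})$ for the gap right after this step. By Observation~\ref{obs:laststep} the last step is of type (a), so $C_1^t$ lies on $\overline{C_1^{t-1}B}$ with $d(C_1^t,R^t)=d(C_1^{t-1},R^t)-\tau(t)$, where $R^t\in\overline{C_1^{t-1}B}$ and $\tau(t)=d(R^{t-1},R^t)$.

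First I would extract a clean bound. Applying the triangle inequality to the triangle $C_1^{t-1}R^{t-1}R^t$ gives $d(C_1^{t-1},R^t)\le d(C_1^{t-1},R^{t-1})+d(R^{t-1},R^t)=d+\tau(t)$, hence
\[
  g(t)=d(C_1^t,R^t)=d(C_1^{t-1},R^t)-\tau(t)\le d=d(C_1^{t-1},R^{t-1}).
\]
So a type-(b) second-to-last step forces the final gap to be no larger than the gap that already existed immediately after that step: once the cop has landed on the robber's orthogonal geodesic, the robber's crossing step buys him nothing.

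It then remains to show that this value $d$ does not exceed what the robber could have obtained with a type-(a) second-to-last step, so that the maximum is in fact attained on a type-(a) sequence. Here I would return to the configuration at time $t-2$ and compare two options for the robber: the actual move to $R^{t-1}$ (which triggered (b) and left the gap $d$), versus a move that crosses $\overline{C_1^{t-2}B}$ and forces a type-(a) response. By Claim~\ref{cl:fmax} the best such crossing is the one ending at $B$, and I would compare the two resulting right triangles through the hyperbolic monotonicity estimates of Lemmas~\ref{lem:closertogether}--\ref{lem:intuition}, using the invariant~\eqref{stmt:coplower} to control where $R^{t-1}$ sits on $g^{t-1}$ relative to $B^{t-1}$. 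I expect this final comparison to be the main obstacle: after a type-(b) step the robber lies off the previous hypotenuse $\overline{C_1^{t-2}B}$, so bounding $d$ by a legitimate crossing value is not a one-line triangle inequality but requires the delicate right-triangle estimates together with a careful subcase analysis of the position of $R^{t-1}$.
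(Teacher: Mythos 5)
There is a genuine gap. Your opening reduction is correct but is not new content: after a type-$(b)$ step both $C_1^{t-1}$ and $R^{t-1}$ lie on $g^{t-1}$, and your triangle-inequality bound $g(t)\le d(C_1^{t-1},R^{t-1})$ is just the observation, already recorded in the paper right after the definition of $f$, that $f(x)<d(C_1^{t-1},R^{t-1})$ whenever $x$, $C_1^{t-1}$, $R^{t-1}$ are not collinear. The quantity $d(C_1^{t-1},R^{t-1})$ arising after a $(b)$-step is \emph{not} a value of the maximization defining $g(t-1)$ (the robber has not crossed $\overline{C_1^{t-2}B}$ at time $t-1$), so this bound by itself neither gives $g(t)\le g(t-1)$ nor contradicts the maximality of the sequence $R^{t_i},\dots,R^t$. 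Everything therefore hinges on the comparison you defer to the end --- and that deferred comparison is precisely the entire content of the claim, which you leave unproved while correctly flagging it as ``the main obstacle.''

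Concretely, what is missing is the paper's construction and estimate chain: let $z_c=g^{t-1}\cap\overline{C_1^{t-2}B}$; first use Lemma~\ref{lem:closertogether} to normalize $\tau(t-1)=d(R^{t-2},R^{t-1})$ (otherwise enlarging the step only increases $g$); introduce the two points $z_r^1,z_r^2$ on $g^{t-1}$ with $d(R^{t-2},z_r^i)=d(C_1^{t-2},z_c)$ and rule out $R^{t-1}\in\overline{z_r^2z_c}$ by Lemma~\ref{lem:closertogether} again, so $R^{t-1}\in\overline{z_rB^{t-1}}$ with $z_r=z_r^1$; then compare the feet $p_c,p_r$ of the perpendiculars from $C_1^{t-2},R^{t-2}$ onto $g^{t-1}$ via two Lambert quadrilaterals to get $d(p_r,R^{t-2})<d(p_c,C_1^{t-2})$; convert this by the hyperbolic sine rule into the angle comparison $\angle C_1^{t-2}z_cC_1^{t-1}<\angle R^{t-2}z_rR^{t-1}$; apply Lemma~\ref{lem:intuition} (two pairs of equal sides, compared angles) to obtain $d(z_r,R^{t-1})<d(z_c,C_1^{t-1})$; and finally combine Lemmas~\ref{lem:shifttoright} and~\ref{lem:closertogether} to conclude
\begin{align*}
    d(B,z_c)-d(B,z_r)>d(B,C_1^{t-1})-d(B,R^{t-1}),
\end{align*}
i.e.\ moving to $z_r$ would have been strictly better for the robber, contradicting maximality. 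Your sketch names the right lemmas but executes none of these steps; in particular you give no mechanism for locating $R^{t-1}$ relative to $z_r$ (the $z_r^1$ versus $z_r^2$ dichotomy) and no route from the quadrilateral geometry to the angle hypothesis of Lemma~\ref{lem:intuition}, without which the final displayed inequality --- the actual assertion of the claim --- is not established.
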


\begin{myproof}
Now assume the step from $C_1^{t-2}$ to $C_1^{t-1}$ is of type $(b)$. Let $z_c$ be the intersection point of $g^{t-1}$ and $\overline{C_1^{t-2}B}$, see Figure~\ref{fig:complex}. By Lemma~\ref{lem:closertogether} we can assume for the step size at time $t-1$ that $\tau(t-1)=d(R^{t-1},R^{t-2})$,
otherwise changing $\tau(t-1)$ to $\tau'(t-1)=\max \{d(C_1^{t-2},z_c),d(R^{t-1},R^{t-2})\}$ gives a larger value of $g$.

 Let $z_r^1,z_r^2$ be the points on  $g^{t-1}$ such that they are at the same distance from the robber's position $R^{t-2}$ as $z_c$ is from the cop's position $C_1^{t-2}$. Let $z_r^1$ be closer to $B^{t-1}$. Note that $R^{t-1}$ is either in $\overline{z_r^1B^{t-1}}$ or in  $\overline{z_r^2z_c}$. The second case cannot happen since by Lemma~\ref{lem:closertogether}, $R^{t-1}=z_r^2$ would give $C_1^{t-1}=z_c$ and that would be a better choice for the robber, contradicting maximality of $R^{t_i}, \dots, R^t$. We denote $z_r=z_r^1$ and then $R^{t-1}\in \overline{z_rB^{t-1}}$, see Figure~\ref{fig:complex}.  

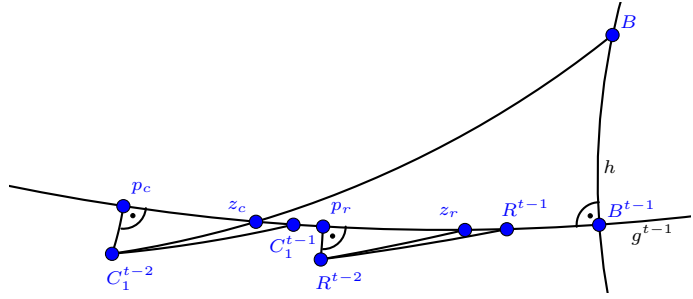
\begin{figure}
    \centering
\begin{tikzpicture}[line cap=round,line join=round,>=triangle 45,x=7cm,y=7cm]
\clip(-0.791397529903244,0) rectangle (0.5019502474256547,0.55);
\draw [shift={(0.03915050606919238,4.295352185700408)},line width=0.8pt]  plot[domain=4.468318698348917:4.938230525743787,variable=\t]({1*4.177509205414962*cos(\t r)+0*4.177509205414962*sin(\t r)},{0*4.177509205414962*cos(\t r)+1*4.177509205414962*sin(\t r)});
\draw [shift={(1.6262130245060902,0.21798746566845137)},line width=0.8pt]  plot[domain=2.619447020646108:3.9302417952789077,variable=\t]({1*1.3008025738988218*cos(\t r)+0*1.3008025738988218*sin(\t r)},{0*1.3008025738988218*cos(\t r)+1*1.3008025738988218*sin(\t r)});
\draw [shift={(-1.0659220087561356,4.98539904225489)},line width=0.8pt]  plot[domain=4.886451004627369:4.942374163786775,variable=\t]({1*4.999039241621004*cos(\t r)+0*4.999039241621004*sin(\t r)},{0*4.999039241621004*cos(\t r)+1*4.999039241621004*sin(\t r)});
\draw [shift={(-1.1111957813343203,0.24293790871332774)},line width=0.8pt]  plot[domain=5.963713277170567:6.1356184643326035,variable=\t]({1*0.5420100478266874*cos(\t r)+0*0.5420100478266874*sin(\t r)},{0*0.5420100478266874*cos(\t r)+1*0.5420100478266874*sin(\t r)});
\draw [shift={(-2.528456918167516,0.2558557064492336)},line width=0.8pt]  plot[domain=6.200072532617923:6.226913956177999,variable=\t]({1*2.3363553945304174*cos(\t r)+0*2.3363553945304174*sin(\t r)},{0*2.3363553945304174*cos(\t r)+1*2.3363553945304174*sin(\t r)});
\draw [shift={(-0.8919574241475181,2.041831463684029)},line width=0.8pt]  plot[domain=4.8612683509464:5.3883939362543005,variable=\t]({1*1.9911463463497459*cos(\t r)+0*1.9911463463497459*sin(\t r)},{0*1.9911463463497459*cos(\t r)+1*1.9911463463497459*sin(\t r)});
\draw [shift={(-0.8246719750337411,2.593921719728593)},line width=0.8pt]  plot[domain=4.8026005216960685:4.940526883262983,variable=\t]({1*2.5315042473766165*cos(\t r)+0*2.5315042473766165*sin(\t r)},{0*2.5315042473766165*cos(\t r)+1*2.5315042473766165*sin(\t r)});
\draw [shift={(-0.9048205002079298,5.5063688743174835)},line width=0.8pt]  plot[domain=4.841099139273715:4.906320048642447,variable=\t]({1*5.4898814484148035*cos(\t r)+0*5.4898814484148035*sin(\t r)},{0*5.4898814484148035*cos(\t r)+1*5.4898814484148035*sin(\t r)});
\begin{scriptsize}
\draw [fill=qqqqff] (-0.5750764303335547,0.16324516483029966) circle (2.5pt) node[above,yshift=2,xshift=7,color=qqqqff]{$p_c$};
\draw [fill=qqqqff] (0.32853523254562444,0.1278781736148007) circle (2.5pt) node[right,yshift=6,color=qqqqff]{$B^{t-1}$};
\draw [fill=qqqqff] (-0.5966107007407843,0.07271133389132173) circle (2.5pt) node[below,yshift=-2,xshift=7,color=qqqqff]{$C_1^{t-2}$};
\draw [fill=qqqqff] (-0.19579954140517178,0.12445520345323935) circle (2.5pt) node[above,yshift=2,xshift=7,color=qqqqff]{$p_r$};
\draw [fill=qqqqff] (-0.20016633954103824,0.06189820844886551) circle (2.5pt) node[below,yshift=-2,xshift=7,color=qqqqff]{$R^{t-2}$};
\draw [fill=qqqqff] (0.07367445922167375,0.11798563989843913) circle (2.5pt)node[above,yshift=2,xshift=-6,color=qqqqff]{$z_r$};
\draw [fill=qqqqff] (0.35386616202452936,0.48858088353948526) circle (2.5pt)node[right, xshift=0.05,yshift=5,color=qqqqff]{$B$};
\draw [fill=qqqqff] (-0.32334713484417127,0.133600316395969) circle (2.5pt)node[above, yshift=1,xshift=-7,color=qqqqff]{$z_c$};
\draw [fill=qqqqff] (-0.2521366764240921,0.12801071437340994) circle (2.5pt)node[below,yshift=-1,color=qqqqff]{$C_1^{t-1}$};
\draw [fill=qqqqff] (0.15317710835218462,0.1193994686191262) circle (2.5pt)node[above,yshift=2,xshift=7,color=qqqqff]{$R^{t-1}$};
\draw[color=black] (0.34863264232841333,0.23868923109440066) node {$h$};
\draw[color=black] (0.43031925181422699,0.110080177970841217) node {$g^{t-1}$};
\draw[line width=0.8pt] (0.32853523254562444,0.1278781736148007)+(91:0.3cm) arc[start angle=91, end angle=185, radius=0.3cm];
\draw[fill=black] (0.31153523254562444,0.1448781736148007)  circle (0.8pt);
\draw[line width=0.8pt] (-0.19579954140517178,0.12445520345323935)+(270:0.3cm) arc[start angle=270, end angle=360, radius=0.3cm];
\draw[fill=black] (-0.17779954140517178,0.10645520345323935)  circle (0.8pt);
\draw[line width=0.8pt] (-0.5750764303335547,0.16324516483029966)+(265:0.3cm) arc[start angle=265, end angle=355, radius=0.3cm];
\draw[fill=black] (-0.5570764303335547,0.14524516483029966)  circle (0.8pt);
\end{scriptsize}
\end{tikzpicture}
    \caption{The case when the second last move of the robber is of type $(b)$.}
    \label{fig:complex}
\end{figure}

In the following we argue that it is of advantage to the robber to move to $z_r$ instead of $R^{t-1}$, which also means that the second to last step is of type $(a)$ as desired.

Let $p_c,p_r$ be the closest points on $g^{t-1}$ from $C_1^{t-2},R^{t-2}$, respectively. This means the angles $\angle C_1^{t-2}p_cB^{t-1}$,  $\angle R^{t-2}p_rB^{t-1}$ are right angles. There are two Lambert quadrilaterals\footnote{A quadrilateral in which three of its angles are right angles.} formed by $C_1^{t-2},p_c,B^{t-1},B^{t-2}$ and $R^{t-2},p_r,B^{t-1},o_h(R^{t-2})\cap h $, respectively. Any two sides of a Lambert quadrilateral determine the length of the other sides, see~\cite{slothers2023hyperbolic}. In particular, since $d(B^{t-1},o_h(R^{t-2})\cap h)\leq d(B^{t-1},B^{t-2})$ and $d(p_r,B^{t-1})<d(p_c,B^{t-1})$, it holds that 
\begin{align*}
d(p_r,R^{t-2})<d(p_c,C_1^{t-2}).
\end{align*}
Therefore by the sine formula for hyperbolic right triangles it holds for the angles that $\angle p_c z_c C_1^{t-2}>\angle p_r z_r R^{t-2}$ and thus \begin{align*}
\angle C_1^{t-2} z_c C_1^{t-1} <\angle R^{t-2} z_r R^{t-1}.\end{align*} 
Since $d(C_1^{t-2},z_c)=d(R^{t-2},z_r)$ and $d(C_1^{t-2},C_1^{t-1})=d(R^{t-2},R^{t-1})$, it holds by Lemma~\ref{lem:intuition} that $d(z_r,R^{t-1})< d(z_c,C_1^{t-1})$.
By Lemmas~\ref{lem:shifttoright} and~\ref{lem:closertogether} we have
\begin{align*}
    d(B,z_c)-d(B,z_r)>d(B,C_1^{t-1})-d(B,R^{t-1}).
\end{align*}
Hence it is of advantage for the robber to move to $z_r$ instead of $R^{t-1}$, a contradiction to the maximality of $R^{t_i}, \dots, R^t$. 
\end{myproof}

We will establish now a positive lower bound on $d(R^{t_i},C_1^{t_i})-d(R^t,C_1^t)$. By Claims~\ref{clm:if-a} and~\ref{clm:not-b}, we can assume that $t=t_i+1$. To compute $d(R^t,C_1^t)$ we only need to compute the difference between $d(C_1^{t_1},B)$ and $d(R^{t_1},B)$. By the Pythagorean Theorem for hyperbolic triangles and $H:=g_0\cap h$, we have
\begin{align*}
& d(R^t,C_1^t)=d(C_1^{t_i},B)-d(R^{t_i},B)\\
= & \acosh( \cosh(d(B,H))\cosh(d(C_1^{t_i}, H)) -
\acosh( \cosh(d(B,H))\cosh(d(R^{t_i}, H))\\
= & \acosh( b\cosh(d(C_1^{t_i}, R^{t_i})+d(R^{t_i}, H))-\acosh( b \cosh(d(R^{t_i}, H))
\end{align*}
where $b=\cosh(d(B,H))=\cosh(8D)>1$. For $x=d(R^{t_i},H)$ and $y=d(C_1^{t_i},H)$, and using that $y-x=d(R^{t_i},C_1^{t_i})\geq\varepsilon$, Lemma~\ref{lem:mean-value} gives $\eta>0$ such that
\begin{align*}
    d(R^t,C_1^t)-d(R^{t_i},C_1^{t_i})\geq \eta\varepsilon.
\end{align*}
By taking $\delta = \eta\varepsilon$, we obtain the conclusion of Lemma \ref{lem:crossing}.
\end{proof}

\section{Catching the Robber}
\label{sec:catch}
We will now turn to the cop catch number. First, we show that on a hyperbolic surface at least three cops are needed to catch the robber. The upper bound is considered in Section~\ref{sec:upper-catch}.

\subsection{Lower bound for cop catch number}
\begin{theorem}
\label{thm:lowerbound}
    If $S$ is a hyperbolic surface, then $c_0(S)\geq 3$. 
\end{theorem}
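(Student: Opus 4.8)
The plan is to show that two cops can never \emph{catch} the robber (occupy exactly the same point) on a hyperbolic surface, which forces $c_0(S) \geq 3$. The key intuition is that catching requires the robber to be cornered: a cop can only catch the robber if the robber has no escape direction available at the moment of capture. On a hyperbolic surface, locally isometric to the hyperbolic plane by Lemma~\ref{lem:ball-sys}, there is always abundant room to escape unless the robber is surrounded. I would set $s=\sys(S)$ and have the robber play with constant agility $\tau \equiv \frac{s}{c}$ for a suitable constant $c$ (e.g. $c=16$ as in the lower bound of Theorem~\ref{thm:mainthm}), and maintain the invariant that immediately after the robber's turn in each round, the robber sits at distance at least some fixed $\rho>0$ from \emph{both} cops simultaneously.

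First I would formalize the escape mechanism on small balls. Fix a round $k$ and consider the robber at $r^k$ with the two cops at $c_1^k, c_2^k$. Using Lemma~\ref{lem:ball-sys}, I work inside a ball $B(r^k, \rho)$ that is isometric to a hyperbolic disk, so I can reason entirely in $\mathbb{H}^2$. The robber wants to pick $r^{k+1}$ with $d(r^k, r^{k+1}) = \tau(k+1)$ maximizing the \emph{smaller} of the two distances $d(c_i^k, r^{k+1})$ after accounting for the cops' subsequent step of length $\tau(k+1)$. The central geometric fact I would prove is that in the hyperbolic plane, from any point $r^k$, the set of directions that move the robber \emph{away} from a single cop (increase distance) is a half-plane's worth of directions; with only two cops there is always a common escape direction that simultaneously increases or maintains distance to both, unless the two cops are positioned nearly antipodally around $r^k$. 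I would handle that antipodal case separately: if the cops are on roughly opposite sides, the robber moves perpendicular to the line joining them, and hyperbolicity (negative curvature spreading geodesics apart) guarantees this perpendicular motion does not decrease distance to either cop below $\rho$.

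The main obstacle, and the step I expect to require the most care, is the antipodal or "pincer" configuration, where the two cops straddle the robber. In Euclidean or spherical geometry a perpendicular escape can fail, but in the hyperbolic plane I would exploit the following quantitative statement, analogous in spirit to Lemmas~\ref{lem:closertogether} and~\ref{lem:shifttoright}: if the robber moves a fixed distance $\tau$ along the perpendicular bisector direction of the segment $c_1^k c_2^k$, then the distance to each cop changes by an amount controlled by the hyperbolic Pythagorean theorem, $\cosh(\text{new}) = \cosh(\tau)\cosh(\text{foot distance})$, so the distance to each cop \emph{strictly increases}. Combined with the single-cop lower bound argument already recorded at the start of Section~\ref{sec:proof} (the robber maintains distance $\geq \frac{s}{16}$ from one cop), this shows the robber maintains $\inf_{n,i} d(c_i^n, r^n) \geq \rho > 0$ throughout.

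Finally, I would assemble these cases into a single round-by-round invariant: regardless of how the two cops move, the robber can choose $r^{k+1}$ keeping distance at least $\rho$ from both cops, hence $d(c_i^n, r^n) \geq \rho$ for all $n$ and $i$, so in particular no cop ever occupies the robber's point. Since this holds against any strategy of two cops, two cops cannot catch the robber, and therefore $c_0(S) \geq 3$. The delicate point to verify carefully is choosing $\rho$ uniformly (independent of the configuration) so that the ball $B(r^k,\rho)$ always satisfies the hypothesis $\rho < \frac{\sys(S)}{4}$ of Lemma~\ref{lem:ball-sys} while still being large enough that a step of length $\tau$ produces a guaranteed distance gain in the pincer case.
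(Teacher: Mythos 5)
Your escape mechanism is essentially the one the paper uses: the robber stays put when both cops are far, flees radially along the geodesic away from a single nearby cop, and in the pincer configuration moves along the perpendicular to the geodesic $h$ through the two cops' positions, with Lemma~\ref{lem:ball-sys} licensing the reduction to a hyperbolic disk and the hyperbolic Pythagorean theorem giving the distance bound (the paper takes $\tau\equiv s/10$ and threshold $s/5$ for ``both cops close''). However, there is one genuine error in your assembly: the conclusion you claim, $d(c_i^n,r^n)\geq\rho$ for all $n$ and $i$, i.e.\ $\inf_{n,i}d(c_i^n,r^n)\geq\rho>0$, is false --- it contradicts Theorem~\ref{thm:mainthm} of this very paper, which shows that on a compact hyperbolic surface two cops \emph{win} the game, forcing $\inf_{n,i}d(c_i^n,r^n)=0$ against every robber strategy. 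Since $r^n$ and $c_i^n$ denote positions after round $n$, i.e.\ after the cops' reply, the cops can and will come arbitrarily close. What the robber can guarantee, and all that $c_0(S)\geq 3$ requires, is the weaker per-round statement: immediately after his own turn, his distance to each cop's \emph{current} (pre-reply) position strictly exceeds the cops' step bound, so no cop can land exactly on him at her turn. This is precisely the paper's invariant: with $\tau\equiv s/10$, whenever $r^k\neq c_1^k,c_2^k$ the robber finds $r^{k+1}$ with $d(r^{k+1},c_j^k)>s/10$ for $j=1,2$; hence he is never captured even though the post-round distances may tend to $0$. Accordingly, your closing worry about choosing $\rho$ so that each step ``produces a guaranteed distance gain'' aims at the wrong target: no cumulative or uniform gain is possible (or needed), only the threshold $d(r^{k+1},c_j^k)>\tau$ restored at every robber turn.

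Two smaller points. First, your claim that ``in Euclidean or spherical geometry a perpendicular escape can fail'' is misleading: inside a small disk the same perpendicular move strictly increases the distance to both cops by the Euclidean (or spherical, within a hemisphere) Pythagorean theorem, and indeed Euclidean and spherical surfaces also have cop catch number $3$, as the paper notes citing~\cite{MR4517712}; hyperbolicity plays no special role in this lower bound beyond the local model supplied by Lemma~\ref{lem:ball-sys}. Second, the degenerate subcase where a cop lies on the perpendicular $o_h(r^k)$ itself must be treated separately (the paper does: there the distance increases by exactly $s/10$, using $r^k\neq c_j^k$), since the right-angle argument does not apply to it; your half-plane-of-directions discussion should make this case explicit.
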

\begin{proof}
 Let $s=\sys(S)$ be the systolic girth of the surface $S$. The robber chooses the agility function $\tau\equiv \frac{s}{10}$ and starting positions 
$r^0,c_1^0,c_2^0$ where $r^0$ is at distance more than $\frac{s}{10}$ to $c_1^0,c_2^0$. In fact, it suffices to assume that $r^0 \neq c_1^0,c_2^0$.
 We show that if $r^k\neq c_1^k,c_2^k$ then there exists a position $r^{k+1}$ at distance at most $\frac{s}{10}$ from $r^k$ with $d(r^{k+1},c_j^k)> \frac{s}{10}$ ($j=1,2$), and hence the robber can escape from the cops.

If $d(r^k,c_j^k)> \frac{s}{5}$ for $j=1,2$, then the robber's strategy is to stay in the same place, and $d(r^{k+1},c_j^k)> \frac{s}{5}$. If $d(r^k,c_1^k)\leq \frac{s}{5}$ and $d(r^k,c_2^k)> \frac{s}{5}$, then the robber moves away from cop $c_1$;  $r^{k+1}$ is the point at distance $\frac{s}{10}$ on the geodesic through $r^{k},c_1^{k}$ which is further away from $c_1^k$.  We consider $B(r^k,\frac{s}{5})$, the disk of radius at most $\frac{s}{5}$ from $r^k$ on $S$ and this is isometric to a hyperbolic disk by Lemma~\ref{lem:ball-sys}. 
It follows that $d(r^{k+1},c_1^{k})=d(r^{k},c_1^{k})+\frac{s}{10}$. By the triangle inequality $d(r^{k+1},c_2^{k})>\frac{s}{10}$. If $d(r^k,c_1^k) \leq  \frac{s}{5}$ and $d(r^k,c_2^k)> \frac{s}{5}$ we use the same strategy by interchanging the roles of $c_1,c_2$.  Hence we can assume $d(r^k,c_1^k),d(r^k,c_2^k)\leq \frac{s}{5}$. Note that $c_1^k,c_2^k$ are now in the hyperbolic disk $B(r^k,\frac{s}{5})$ of radius $\frac{s}{5}$ centred at $r^k$. We consider the disk $B(r^k,\frac{s}{5})$ being embedded in the Poincaré disk, and consider the geodesic $h$ through $c_1^k,c_2^k$ in this disk and let $o_h(r^k)$ be the orthogonal geodesic to $h$ passing through $r^k$. Now let $r^{k+1}$ be the point at distance $\frac{s}{10}$ on the geodesic $o_h(r^k)$ which is further away from $g$. If $c_j^k$ is on $o_h(r^k)$ then $d(r^{k+1},c_j^k)= d(r^{k},c_j^k)+\frac{s}{10}>\frac{s}{10}$. If $c_j^k$ is not on $o_h(r^k)$, then $\angle c_j^k,o_h(r^k)\cap h, r^{k+1}$ form a right angle and by the hyperbolic Pythagorean theorem $d(c_j^k,r^{k+1})>\frac{s}{10}$.
\end{proof}
\subsection{Upper bound for cop catch number}
\label{sec:upper-catch}
The following lemma shows how to capture the robber on the universal covering space $\mathcal{D}$, if the robber is contained in a polygon which is guarded by the cops. 
\begin{lemma}
\label{lem:capturepolygon}
    Suppose the robber is contained in a bounded convex polygon in the Poincaré disk $\mathcal{D}$ where $n$ cops guard the boundary of the polygon. Then these $n$ cops can catch the robber.
\end{lemma}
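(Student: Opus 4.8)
The plan is to reduce the capture problem to an induction on the number of cops by having one cop permanently guard a suitable chord, thereby shrinking the region available to the robber into a strictly smaller convex polygon. First I would set up the base case: when $n=1$, the ``polygon'' degenerates to a single isometric path (a segment of a geodesic), and a single cop guarding its boundary in fact captures any robber confined to that segment, since the robber has nowhere to go but onto the path itself. For the inductive step, suppose the robber is trapped in a bounded convex polygon $Q$ whose boundary is guarded by $n$ cops $c_1,\dots,c_n$. I would pick one cop, say $c_n$, who is guarding some edge $e$ of $\partial Q$, and reassign her to guard instead a geodesic chord $\ell$ of $Q$ that splits $Q$ into two smaller convex sub-polygons $Q_1$ and $Q_2$, with the robber confined (after the cop establishes her guard) to whichever side contains him. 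Because $\mathcal{D}$ is a $\cat(0)$-space and $Q$ is convex, any chord of $Q$ is an isometric path inside $\mathcal{D}$, so Lemma~\ref{lem:guardingpath} applies and $c_n$ can guard $\ell$ after a bounded adjustment period.

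The key geometric point I would emphasize is that guarding a chord $\ell$ is equivalent to guarding one new edge of the relevant sub-polygon while the remaining edges of that sub-polygon are already portions of $\partial Q$, still guarded by the original cops. More precisely, once $c_n$ controls $\ell$, the robber cannot cross $\ell$; and since the other boundary segments of the sub-polygon he occupies are pieces of $\partial Q$ guarded by (some of) $c_1,\dots,c_{n-1}$, the robber is now trapped in a convex polygon with strictly fewer ``free'' sides. The subtlety is that the guard $c_n$ establishes for the chord must be compatible with the guarding duties she inherits on the boundary; here I would exploit that a chord through $Q$ terminates at two boundary points, and I would choose $\ell$ so that the reallocation of cops still covers every edge of the chosen sub-polygon. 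Iterating the chord-cutting $n-1$ times peels off one cop per step and collapses the region to a single guarded segment, at which point the last remaining cop captures the robber as in the base case.

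\medskip

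The main obstacle I anticipate is making the ``shrink by one cop'' step fully rigorous, i.e.\ verifying that after cutting along $\ell$ the sub-polygon containing the robber really has all of its boundary guarded by the remaining $n-1$ cops plus $c_n$ on $\ell$, without double-counting or leaving a gap. In particular, one must be careful that a single original cop was guarding a single edge, so cutting a chord that starts and ends in the \emph{interiors} of two edges would leave two partially-guarded edges rather than fewer fully-guarded ones. To avoid this I would insist that each chord be drawn between two \emph{vertices} of the current polygon (a diagonal), so that every resulting sub-polygon has each of its sides either entirely a former edge of $\partial Q$ or entirely equal to the new chord. A triangulation-type argument then guarantees that with $n$ cops guarding the $n$ edges of the polygon we can successively introduce diagonals and reassign cops so that the robber's convex region loses a vertex at each stage, terminating in a triangle and then a segment. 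The remaining routine verifications—that the adjustment time for each new guard is finite and that the robber cannot slip across a chord while $c_n$ is still adjusting—follow from the finite-time guarantee in Lemma~\ref{lem:guardingpath} together with the fact that we may let the cop finish adjusting before she is relied upon, which only delays but does not prevent the eventual capture.
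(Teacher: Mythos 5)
There is a genuine gap, and it sits at the heart of your inductive step: the moment you reassign $c_n$ from her edge $e$ to a chord $\ell$, the edge $e$ becomes unguarded, and you have no control over which side of $\ell$ the robber ends up on. The robber moves first in every round and sees the cops' positions, so during the (finite but positive) adjustment period of Lemma~\ref{lem:guardingpath} he can simply run to $e$ and cross it, leaving the polygon entirely; even if $c_n$ establishes her guard on $\ell$ before that, the robber can arrange to be on the side of $\ell$ containing $e$, in which case the sub-polygon confining him has an unguarded side and the induction hypothesis does not apply. Your stated fix --- choosing $\ell$ ``so that the reallocation still covers every edge of the chosen sub-polygon'' --- cannot be implemented, because the relevant sub-polygon is chosen by the robber, not by the cops. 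A second, independent obstruction is termination: restricting to vertex-to-vertex diagonals (as you must, to avoid partially guarded edges) means the process stalls at a triangle, which has no diagonals at all, so the claimed collapse ``to a segment'' never occurs; and chords ending in edge interiors reintroduce exactly the partial-guarding problem you identified. Note also that $n$ here equals the number of guarded sides with no spare cop, so there is no slack to guard both $e$ and $\ell$ simultaneously --- the extra-cop trick of this kind is what the paper uses in Theorem~\ref{thm:catch}, where additional cops are available, but it is not available inside this lemma.

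The paper's proof avoids any reassignment: all $n$ cops act simultaneously and permanently. Each cop $c_j$ maintains the perpendicular bisector $b_j^k$ between herself and the robber, mirrors the robber's move across $b_j^k$, and spends any leftover agility moving toward him; the robber is thus confined to the intersection $P_0$ of the initial half-planes, and each bisector can only advance. The capture argument then splits into two cases: if the robber's total motion is finite, the cops' surplus agility pushes some bisector past $P_0$, a contradiction; if it is infinite, hyperbolic trigonometry takes over --- the angles of the bisector polygon are nondecreasing (via Lambert quadrilaterals), the hyperbolic angle-sum deficit forces some $\alpha_{j,j+1}^k$ to stay below $(n-2)\pi/n$, and a cotangent estimate bounds the angle $\beta_{1,2}^k$ away from $\pi$, so that infinitely much of the robber's motion makes a definite angle with one cop's direction and that cop gains unbounded ground. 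This genuinely uses negative curvature and cannot be replaced by the purely combinatorial cutting scheme you propose; if you want to salvage a decomposition argument, you would need strictly more cops than sides, which is a different (and weaker) statement than the lemma.
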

\begin{proof}
    Suppose the robber is contained in a bounded convex polygon at the beginning of the game.  For all $k\geq 0$ we consider the bisectors $b_j^k$ between the robber's position $r^k$ and each of the cops' positions $c_j^k$ ($j\in[n]$), and let $B_{j}^k$ be the midpoint between $r^k$ and $c_j^k$, see Figure~\ref{fig:alphaandbeta} and Figure~\ref{fig:closerorfurther}.

    The bisectors $b_j^0$ bound a polygon $P_0$ containing the robber, otherwise there is a point on the boundary of the Poincaré disk towards which the robber can walk and escape from the cops. Without loss of generality let $b_1^0,\dots,b_n^0$ appear consecutively on the convex hull of $P_0$, see Figure~\ref{fig:alphaandbeta} for an example with $n=4$. Let $\alpha_{j,{j+1}}^k$ be the angle between the bisector $b_j^k$ and $b_{j+1}^k$~(we consider the indices modulo $n$) and let $P_{j,j+1}^k$ be their intersection point.
\begin{figure}[ht!]
    \centering
    \begin{tikzpicture}[line cap=round,line join=round,>=triangle 45,x=3cm,y=3cm]
\clip(-1.812639871643102,-1.154331655558786) rectangle (1.812639871643102,1.836877202485103);
\draw [shift={(0.7496347031159181,-0.05185309583904)}, rotate=160, line width=0.8pt,color=qqqqff,fill=qqqqff,fill opacity=0.10000000149011612] (0,0) -- (0:0.20901304132294235) arc (0:31.052884389978164:0.20901304132294235) -- cycle;
\draw [shift={(0.03666216887532403,0.9304238649313455) },rotate=263, shift={(0,0)}, line width=0.8pt,color=qqqqff,fill=qqqqff,fill opacity=0.10000000149011612] (0,0) -- (0:0.20901304132294235) arc (0:8.416425493064619:0.20901304132294235) -- cycle;
\draw [shift={(-0.7320766396776542,0.15448087712926542)}, rotate=-20,line width=0.8pt,color=qqqqff,fill=qqqqff,fill opacity=0.10000000149011612] (0,0) -- (0:0.20901304132294235) arc (0:29.011152645835452:0.20901304132294235) -- cycle;
\draw [shift={(0.07333631683011352,-0.49412760080586177)},rotate=123,line width=0.8pt,color=qqqqff,fill=qqqqff,fill opacity=0.10000000149011612] (0,0) -- (-69.00396261732368:0.20901304132294235) arc (-69.00396261732368:0:0.20901304132294235) -- cycle;
\draw [shift={(-0.0655864222669449,0.11787961616683429)}, rotate=131, line width=0.8pt,color=qqqqff,fill=qqqqff,fill opacity=0.10000000149011612] (0,0) -- (-100.785293508098:0.13934202754862823) arc (-100.785293508098:0:0.13934202754862823) -- cycle;
\draw [shift={(-0.0655864222669449,0.11787961616683429)}, rotate=30,line width=0.8pt,color=qqqqff,fill=qqqqff,fill opacity=0.10000000149011612] (0,0) -- (-81.84982102360783:0.13934202754862823) arc (-81.84982102360783:0:0.13934202754862823) -- cycle;
\draw [shift={(-0.0655864222669449,0.11787961616683429)}, rotate=-53,line width=0.8pt,color=qqqqff,fill=qqqqff,fill opacity=0.10000000149011612] (0,0) -- (-74.08946979673681:0.13934202754862823) arc (-74.08946979673681:0:0.13934202754862823) -- cycle;
\draw [shift={(-0.0655864222669449,0.11787961616683429)},rotate=-128,line width=0.8pt,color=qqqqff,fill=qqqqff,fill opacity=0.10000000149011612] (0,0) -- (-103.27541567155731:0.13934202754862823) arc (-103.27541567155731:0:0.13934202754862823) -- cycle;
\draw [rotate around={0:(0,0)},line width=0.8pt,dash pattern=on 1pt off 1pt,color=aqaqaq] (0,0) ellipse (3cm and 3cm);
\draw [shift={(-0.8465681685600726,1.0366816706293416)},line width=0.8pt]  plot[domain=4.553420895127159:6.240935675203007,variable=\t]({1*0.8895990952322275*cos(\t r)+0*0.8895990952322275*sin(\t r)},{0*0.8895990952322275*cos(\t r)+1*0.8895990952322275*sin(\t r)});
\draw [shift={(1.1099779546217348,0.95958645942069)},line width=0.8pt]  plot[domain=3.104582885430488:4.604317389988864,variable=\t]({1*1.0737118947137472*cos(\t r)+0*1.0737118947137472*sin(\t r)},{0*1.0737118947137472*cos(\t r)+1*1.0737118947137472*sin(\t r)});
\draw [shift={(-1.3752033228946468,-1.4684921385334626)},line width=0.8pt]  plot[domain=0.2979984307316475:1.3383853492126891,variable=\t]({1*1.7457529436421007*cos(\t r)+0*1.7457529436421007*sin(\t r)},{0*1.7457529436421007*cos(\t r)+1*1.7457529436421007*sin(\t r)});
\draw [shift={(0.9660605829778759,-1.121011707762229)},line width=0.8pt]  plot[domain=1.5401100971582198:3.0240642161751192,variable=\t]({1*1.090843847176828*cos(\t r)+0*1.090843847176828*sin(\t r)},{0*1.090843847176828*cos(\t r)+1*1.090843847176828*sin(\t r)});
\draw [shift={(12.889374335020616,11.49025237173576)},line width=0.8pt]  plot[domain=3.830144984692494:3.862028450971523,variable=\t]({1*17.23838363404374*cos(\t r)+0*17.23838363404374*sin(\t r)},{0*17.23838363404374*cos(\t r)+1*17.23838363404374*sin(\t r)});
\draw [shift={(-3.3491277214068362,2.455397258631688)},line width=0.8pt]  plot[domain=5.5216009154927335:5.664524490805949,variable=\t]({1*4.030587077832674*cos(\t r)+0*4.030587077832674*sin(\t r)},{0*4.030587077832674*cos(\t r)+1*4.030587077832674*sin(\t r)});
\draw [shift={(17.062894164322845,13.812335211542337)},line width=0.8pt]  plot[domain=3.816037026226112:3.8558982228823986,variable=\t]({1*21.929955797011875*cos(\t r)+0*21.929955797011875*sin(\t r)},{0*21.929955797011875*cos(\t r)+1*21.929955797011875*sin(\t r)});
\draw [shift={(-1.9127114733500297,3.2545973719304184)},line width=0.8pt]  plot[domain=5.244585895256138:5.42296255980432,variable=\t]({1*3.6401743411080223*cos(\t r)+0*3.6401743411080223*sin(\t r)},{0*3.6401743411080223*cos(\t r)+1*3.6401743411080223*sin(\t r)});
\begin{scriptsize}
\draw [fill=qqqqff] (-0.0655864222669449,0.11787961616683429) circle (2.5pt);
\draw[color=qqqqff] (-0.0550887891737795,0.18425311304034063) node {$r$};
\draw [fill=qqqqff] (-0.42153149639567145,0.5366385269065145) circle (2.5pt);
\draw[color=qqqqff] (-0.36718728816467683,0.6349831080831086) node {$c_1$};
\draw[color=black] (-0.16281898109335538,0.6153120943087944) node {$b_1$};
\draw [fill=qqqqff] (0.03666216887532403,0.9304238649313455) circle (2.5pt);
\draw[color=qqqqff] (0.18266768226848957,0.9211540234190187) node {$P_{1,2}$};
\draw[color=black] (0.16695715077173143,0.5967331573023106) node {$b_2$};
\draw [fill=qqqqff] (0.46166011000614604,0.49538936639478803) circle (2.5pt);
\draw[color=qqqqff] (0.5478253594046487,0.5838910313152781) node {$c_2$};
\draw [fill=qqqqff] (-0.432016200152546,-0.3259859520311673) circle (2.5pt);
\draw[color=qqqqff] (-0.33578978196614349,-0.39600820985316043) node {$c_4$};
\draw[color=black] (-0.44139791809983916,-0.04396374224108064) node {$b_4$};
\draw [fill=qqqqff] (0.49375246615211626,-0.553823100436365) circle (2.5pt);
\draw[color=qqqqff] (0.5803384991659952,-0.4751083780542975) node {$c_3$};
\draw[color=black] (0.486351737365746,-0.20358219697000833) node {$b_3$};
\draw [fill=qqqqff] (-0.7320766396776542,0.15448087712926542) circle (2.5pt);
\draw[color=qqqqff] (-0.743410762545973,0.28591750771477947) node {$P_{4,1}$};
\draw [fill=qqqqff] (0.07333631683011352,-0.49412760080586177) circle (2.5pt);
\draw[color=qqqqff] (0.2287597590363199,-0.506910986318886) node {$P_{3,4}$};
\draw [fill=qqqqff] (0.7496347031159181,-0.05185309583904) circle (2.5pt);
\draw[color=qqqqff] (0.7382594637211072,0.07581238962400653) node {$P_{2,3}$};
\draw[color=qqqqff] (0.43073328263681826,-0.03078611010131328) node {$\alpha_{2,3}$};
\draw[color=qqqqff] (0.0203519342425545,0.5367331573023106) node {$\alpha_{1,2}$};
\draw[color=qqqqff] (-0.40718728816467683,0.14941760615318355) node {$\alpha_{4,1}$};
\draw[color=qqqqff] (0.05515454250714297,-0.233585257119036) node {$\alpha_{3,4}$};
\draw[color=qqqqff] (-0.03454275078986433,0.3184306474761261) node {$\beta_{1,2}$};
\draw[color=qqqqff] (0.1687597590363199,0.0547728719015626) node {$\beta_{2,3}$};
\draw[color=qqqqff] (-0.0348980165382433855,-0.08424016942137996) node {$\beta_{3,4}$};
\draw[color=qqqqff] (-0.28642419762253237,0.0547728719015626) node {$\beta_{4,1}$};
\end{scriptsize}
\end{tikzpicture}
  \caption{A schematic figure when the number of cops is $n=4$. A label $x$ in the figure denotes the value $x^k$. That is, the figure depicts the positions of the cops $c_i^k$ ($i=1,2,3,4$), the intersection points $P_{i,i+1}^k$ of the bisectors $b_i^k, b_{i+1}^k$, and the angles $\beta_{i,i+1}^k,\alpha_{i,i+1}^k$.}
    \label{fig:alphaandbeta}
\end{figure}
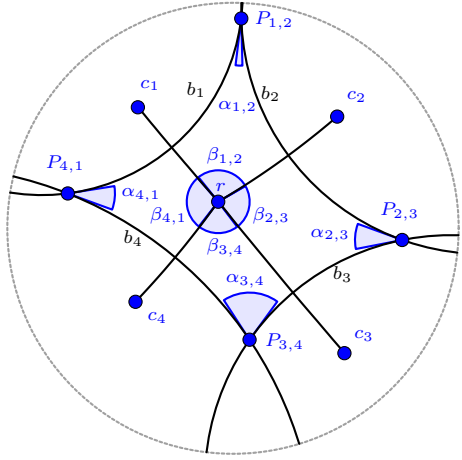

    Every cop has the same strategy, which is as follows. Cop $c_j$ copies the move of the robber by reflecting it along the bisector $b_j$ and subsequently if there is any agility left she moves as close to the robber as possible (see Figure~\ref{fig:closerorfurther}). 

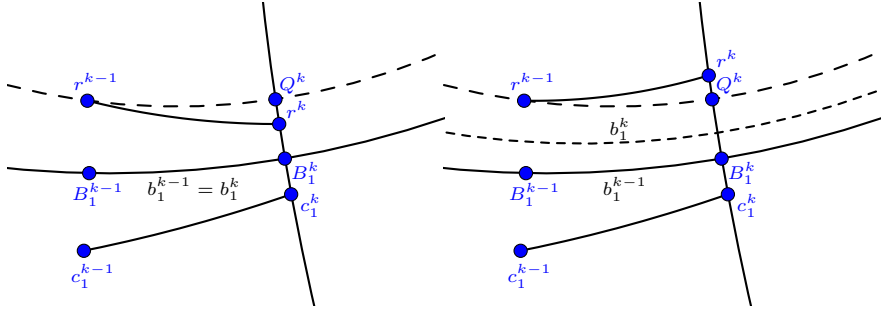
\begin{figure}[ht!]
    \centering
   \begin{tikzpicture}[line cap=round,line join=round,>=triangle 45,x=7.5cm,y=7.5cm]
\clip(-0.4414271878455544,0.0027800505625030455) rectangle (0.33274657450456,0.5395553123591877);
\draw [shift={(-0.25934590292717835,2.08769115197294)},line width=0.8pt]  plot[domain=4.3406266544369565:5.3313373261774535,variable=\t]({1*1.8508686186197052*cos(\t r)+0*1.8508686186197052*sin(\t r)},{0*1.8508686186197052*cos(\t r)+1*1.8508686186197052*sin(\t r)});
\draw [shift={(4.92055325520448,1.0902596031607736)},line width=0.8pt]  plot[domain=3.159899931587523:3.559384925808607,variable=\t]({1*4.939687271436086*cos(\t r)+0*4.939687271436086*sin(\t r)},{0*4.939687271436086*cos(\t r)+1*4.939687271436086*sin(\t r)});
\draw [shift={(-0.14171566499531968,1.556803051113228)},line width=0.8pt,dash pattern=on 5pt off 5pt]  plot[domain=4.1090646281454:5.497272791724265,variable=\t]({1*1.201548613107485*cos(\t r)+0*1.201548613107485*sin(\t r)},{0*1.201548613107485*cos(\t r)+1*1.201548613107485*sin(\t r)});
\draw [shift={(0.03564646681252762,1.702028670773432)},line width=0.8pt]  plot[domain=4.4667455209913935:4.7159943413877174,variable=\t]({1*1.377741727150267*cos(\t r)+0*1.377741727150267*sin(\t r)},{0*1.377741727150267*cos(\t r)+1*1.377741727150267*sin(\t r)});
\draw [shift={(-0.8672319840916518,2.882419688698807)},line width=0.8pt]  plot[domain=4.911529194157058:5.045765017520871,variable=\t]({1*2.8390904487229824*cos(\t r)+0*2.8390904487229824*sin(\t r)},{0*2.8390904487229824*cos(\t r)+1*2.8390904487229824*sin(\t r)});
\begin{scriptsize}
\draw [fill=qqqqff] (-0.29939348076326583,0.36564530502114523) circle (2.5pt);
\draw[color=qqqqff] (-0.283877105930003,0.3997731891103237) node {$r^{k-1}$};
\draw [fill=qqqqff] (-0.3055843459275345,0.0994381029575836) circle (2.5pt);
\draw[color=qqqqff] (-0.28971161964518434,0.05488606408991622) node {$c_1^{k-1}$};
\draw[color=black] (-0.11049858083448855,0.2090829148066966) node {$b_1^{k-1}=b_1^k$};
\draw [fill=qqqqff] (0.033930152313511666,0.3681619713408548) circle (2.5pt);
\draw[color=qqqqff] (0.06552367779464543,0.3959396871510121) node {$Q^k$};
\draw [fill=qqqqff] (0.04061371234659372,0.32429589798953506) circle (2.5pt);
\draw[color=qqqqff] (0.0721916934691384,0.35093159310405424) node {$r^k$};
\draw [fill=qqqqff] (0.06181792014930021,0.19964111947995633) circle (2.5pt);
\draw[color=qqqqff] (0.09636325028917541,0.17990629920731089) node {$c_1^k$};
\draw [fill=qqqqff] (0.050636024408627814,0.26296491274788997) circle (2.5pt);
\draw[color=qqqqff] (0.08719371698087785,0.2374184402777477) node {$B_1^k$};
\draw [fill=qqqqff] (-0.2961447908716764,0.23718838644210624) circle (2.5pt);
\draw[color=qqqqff] (-0.2805430980927565,0.2000798795390874) node {$B_1^{k-1}$};
\end{scriptsize}
\end{tikzpicture}%
\begin{tikzpicture}[line cap=round,line join=round,>=triangle 45,x=7.5cm,y=7.5cm]
\clip(-0.4414271878455544,0.0027800505625030455) rectangle (0.33274657450456,0.5395553123591877);
\draw [shift={(-0.25934590292717835,2.08769115197294)},line width=0.8pt]  plot[domain=4.3406266544369565:5.3313373261774535,variable=\t]({1*1.8508686186197052*cos(\t r)+0*1.8508686186197052*sin(\t r)},{0*1.8508686186197052*cos(\t r)+1*1.8508686186197052*sin(\t r)});
\draw [shift={(4.92055325520448,1.0902596031607736)},line width=0.8pt]  plot[domain=3.159899931587523:3.559384925808607,variable=\t]({1*4.939687271436086*cos(\t r)+0*4.939687271436086*sin(\t r)},{0*4.939687271436086*cos(\t r)+1*4.939687271436086*sin(\t r)});
\draw [shift={(-0.14171566499531968,1.556803051113228)},line width=0.8pt,dash pattern=on 5pt off 5pt]  plot[domain=4.1090646281454:5.497272791724265,variable=\t]({1*1.201548613107485*cos(\t r)+0*1.201548613107485*sin(\t r)},{0*1.201548613107485*cos(\t r)+1*1.201548613107485*sin(\t r)});
\draw [shift={(-0.8672319840916518,2.882419688698807)},line width=0.8pt]  plot[domain=4.911529194157058:5.045765017520871,variable=\t]({1*2.8390904487229824*cos(\t r)+0*2.8390904487229824*sin(\t r)},{0*2.8390904487229824*cos(\t r)+1*2.8390904487229824*sin(\t r)});
\draw [shift={(-0.28045520677697783,1.4432019361444353)},line width=0.8pt]  plot[domain=4.694815589003208:5.002520685854346,variable=\t]({1*1.0777230402563331*cos(\t r)+0*1.0777230402563331*sin(\t r)},{0*1.0777230402563331*cos(\t r)+1*1.0777230402563331*sin(\t r)});
\draw [shift={(-0.19670045423428295,1.804960079854035)},line width=0.8pt,dash pattern=on 3pt off 3pt]  plot[domain=4.237654221123286:5.404222474764782,variable=\t]({1*1.5154444755789167*cos(\t r)+0*1.5154444755789167*sin(\t r)},{0*1.5154444755789167*cos(\t r)+1*1.5154444755789167*sin(\t r)});
\begin{scriptsize}
 \draw [fill=qqqqff] (-0.29939348076326583,0.36564530502114523) circle (2.5pt);
\draw[color=qqqqff] (-0.283877105930003,0.3997731891103237) node {$r^{k-1}$};
\draw [fill=qqqqff] (-0.3055843459275345,0.0994381029575836) circle (2.5pt);
\draw[color=qqqqff] (-0.28971161964518434,0.05488606408991622) node {$c_1^{k-1}$};
\draw[color=black] (-0.12049858083448855,0.2090829148066966) node {$b_1^{k-1}$};
\draw [fill=qqqqff] (0.033930152313511666,0.3681619713408548) circle (2.5pt);
\draw[color=qqqqff] (0.06552367779464543,0.3959396871510121) node {$Q^k$};
\draw [fill=qqqqff] (0.06181792014930021,0.19964111947995633) circle (2.5pt);
\draw[color=qqqqff] (0.09636325028917541,0.17990629920731089) node {$c_1^k$};
\draw [fill=qqqqff] (0.050636024408627814,0.26296491274788997) circle (2.5pt);
\draw[color=qqqqff] (0.08719371698087785,0.2374184402777477) node {$B_1^k$};
\draw [fill=qqqqff] (-0.2961447908716764,0.23718838644210624) circle (2.5pt);
\draw[color=qqqqff] (-0.2805430980927565,0.2000798795390874) node {$B_1^{k-1}$};
\draw [fill=qqqqff] (0.02785810702835706,0.41052103091294656) circle (2.5pt);
\draw[color=qqqqff] (0.060273424855678,0.4408660498114304) node {$r^k$};
\draw[color=black] (-0.12949858083448855,0.31542704020264045) node {$b_1^{k}$};
\end{scriptsize}
\end{tikzpicture}
    \caption{A step $K_1^+$ in comparison to a step $K_1^{-}$. In both cases the distance of $r^k$ to $r^{k-1}$ is the same and hence the move of cop $c_1$ is the same.
    }
    \label{fig:closerorfurther}
\end{figure}

    More precisely, in the $k$-th round the cop $c_j^{k-1}$ will move to $o_{b_j^{k-1}}(r^k)$, as close to the robber as possible. In particular, $b_j^{k-1}$ and $b_j^k$ are parallel and the voronoi cell of the robber with respect to $b_j^k$ is contained in the voronoi cell with respect to $b_j^{k-1}$. Therefore throughout the whole game, the robber is restricted to $P_0$.
    We have to show that with this strategy the robber is eventually caught. If
\begin{align*}
    \sum_{k =1}^\infty \tau(k)-d(r^k,r^{k-1})=\infty,
\end{align*}
then the cops catch the robber. Note that while the robber is not caught, the distance between $b_j^{k-1}$ and $b_j^k$ is at least $\tau(k)-d(r^k,r^{k-1})$. The part of the Poincaré disk $\mathcal{D}\setminus b_j^k$ containing the cop $c_j$ eventually includes $P_0$, in which $r^k$ is contained, which is a contradiction since $b_j^k$ is a bisector.

Hence we assume that the robber moves an infinite distance, that is 
\begin{align*}
    \sum_{k =1}^\infty d(r^k,r^{k-1})=\infty.
\end{align*}
We want to show that also in this case one of the bisectors surpasses the polygon. Note that for some $j$, $\alpha_{j,j+1}^0<(n-2)\pi/n$ since the angle sum in a hyperbolic $n$-gon is upper bounded by the angle sum in an Euclidean $n$-gon. We suppose $j=1$ in the following and will show, that either $b_1^k$ or $b_2^k$ eventually surpasses the polygon. By the following claim we can assume that also $\alpha_{1,2}^k<(n-2)\pi/n$ for all positive integers $k$, as all the angles are non-decreasing and one of them has to stay below $(n-2)\pi/n$.  
    \begin{claim} The angles in the guarded polygon  are increasing, which means
    \label{cl:increasingAngle}
    \begin{align*}
        \alpha_{j,j+1}^k \geq \alpha_{j,j+1}^{k-1}.
    \end{align*}
        \end{claim}
        \begin{myproof}
            We consider the step of the cop $c_j$ and the cop $c_{j+1}$ separately, imagining the cop $c_j$ takes her turn first. Let $P'$ be the intersection point between $b_j^k,b_{j+1}^{k-1}$ and let $\alpha'$ be the angle between $b_j^k,b_{j+1}^{k-1}$.
         If $b_j^k=b_j^{k-1}$, then $\alpha'=\alpha_{j,j+1}^{k-1}$. Otherwise $b_j^k\neq b_j^{k-1}$ and note that $b_j^k$ is orthogonal to $o_{b_j^{k-1}}(r^k)$. In particular, $b_j^{k-1}$ and $b_j^k$ are parallel, see Figure~\ref{fig:closerorfurther}. Since the quadrilateral $B_j^k ,B_j^{k-1},P_{j,j+1}^{k-1}, P'$ has angle sum smaller than $2\pi$ and has a right angle at $B_j^{k-1}$ and $B_j^{k}$, it holds that the angle at $P'$ in the quadrilateral is smaller than $\pi-\alpha_{j,j+1}^{k-1}$ but then $\alpha'> \alpha_{j,j+1}^{k-1}$.
        Now we consider the quadrilateral formed by $B_{j+1}^k,B_{j+1}^{k-1},P', P_{j,j+1}^k$. By the same argument, $\alpha_{j,j+1}^k>\alpha'$. 
        \end{myproof}

 We want in the following that the angle between $\overline{r^{k-1}r^{k}}$ and $\overline{r^kc_j^k}$ for $j=1$ or $j=2$ is bounded away from $\pi/2$, as this will help the cop $c_1$ or $c_2$ to get closer to the robber. We show that if the angle between $\overline{r^{k-1}r^{k}}$ and $\overline{r^kc_1^k}$ is close to $\pi/2$, then the angle between $\overline{r^{k-1}r^{k}}$ and $\overline{r^kc_2^k}$ is bounded away from $\pi/2$ by bounding the angle $\beta_{1,2}^k$, see Figure \ref{fig:alphaandbeta}. We define the angle $\beta_{1,2}^k$ for every $k\in \mathbb{N}$ as the angle between $\overline{r^kc_{1}^k}$ and $\overline{r^kc_{2}^k}$.

\begin{claim}
\label{cl:boundingbeta}
It holds that
    \begin{align*}
        \acot \left(\frac{\cosh(d\left(P_{1,2}^{k},r^k\right))}{\cot((n-2)\pi/n)}\right)\leq \beta_{1,2}^k \leq \pi-\alpha_{1,2}^k.
    \end{align*}
\end{claim}
\begin{myproof}
    Since $\alpha_{1,2}^k\leq (n-2) \pi/2$, there is a quadrilateral formed by $r^kB_1^kP_{1,2}^{k}B_{2}^k$ with right angles at $B_1^k$ and $B_{2}^k$. Since a quadrilateral has angle sum at most $2\pi$ it holds that $\beta_{1,2}^k \leq  \pi-\alpha_{1,2}^k$. On the other hand~(\cite[Corollary 32.13]{MR0666074}),
 $$ \cot\left(\angle P_{1,2}^{k}r^kB_1^k\right) =\frac{\cosh\left(d\left(P_{1,2}^{k},r^k\right)\right)}{\cot\left(\angle r^kP_{1,2}^{k}B_1^k\right)}\leq \frac{\cosh\left(d\left(P_{1,2}^{k},r^k\right)\right)}{\cot\left(\left(n-2\right)\pi/n\right)}.$$ 
Since
$\beta_{1,2}^k \geq \angle P_{1,2}^{k}r^kB_1^k$ the claim follows.
\end{myproof}
Note that $\alpha_{1,2}^k \geq \alpha_{1,2}^0$ by Claim~\ref{cl:increasingAngle} and $d\left(P_{1,2}^{k},r^k\right)$ is upper bounded by the diameter $D$ of $P_0$. Let
$$\beta=\min\left\{\acot \left(\frac{\cosh(D)}{\cot((n-2)\pi/n)}\right), \alpha_{1,2}^0\right\}.$$
 If the angle at $r^k$ between $\overline{r^{k-1}r^{k}}$ and $\overline{r^kc_1^k}$ is in $[\frac{\pi}{2}-\frac{\beta}{2}, \frac{\pi}{2}+\frac{\beta}{2}]$, then the angle between $\overline{r^{k-1}r^{k}}$ and $\overline{r^kc_2^k}$ is in $[0,\frac{\pi}{2}-\frac{\beta}{2}]\cup [\frac{\pi}{2}+\frac{\beta}{2}, \pi]$ by Claim~\ref{cl:boundingbeta} and the definition of $\beta$. Let $K_j$ for $j=1,2$ be the set of rounds at which the angle between $\overline{r^{k-1}r^{k}}$ and $\overline{r^kc_j^k}$ is not in  $[\frac{\pi}{2}-\frac{\beta}{2}, \frac{\pi}{2}+\frac{\beta}{2}]$. Without loss of generality, 
\begin{align*}
    \sum_{k \in K_1} d(r^k,r^{k-1})=\infty.
\end{align*}
We will show that if the cop $c_1$ does not catch the robber, then $b_1^k$ eventually surpasses $P_0$, which is a contradiction.  Let $Q^k$ be the closest point to $r^{k-1}$ on the geodesic $r^kc_1^k$. Let $K_1^+$ be the time steps in $K_1$ in which the robber's position $r^k$ is closer to $c_1^k$ than $Q^k$ to $c_1^k$ and let $K_1^{-}$ be the remaining steps, see Figure~\ref{fig:closerorfurther}. If $k$ is a step of type $K_1^{-}$, then the distance between $b_1^{k-1}$ and $b_1^{k}$ is at least $d(r^k,Q^k)$.
 Suppose $k$ is a step of type $K_1^{+}$.
Since $r^{k-1}Q^kB_1^kB_1^{k-1}$ is a Lambert quadrilateral with acute angle at $r^{k-1}$, it holds that $d(r^{k-1},B_1^{k-1})\leq d(Q^k,B_1^k)$. Hence $d(r^k,Q^k)$
is the distance that the cop $c_1$ is getting closer to $r$ in step $k$. We will show that 
\begin{align*}
    \sum_{k \in K_1} d(r^k,Q^k)=\infty,
\end{align*}
which then proves that the robber is eventually caught.
Note that the triangle $r^{k-1}Q^kr^k$ has a right angle at $Q^k$ and an angle at $r^k$ not in  $[\frac{\pi}{2}-\frac{\beta}{2}, \frac{\pi}{2}+\frac{\beta}{2}]$. In fact, this angle is at most $\frac{\pi}{2}-\frac{\beta}{2}$, since it is in a right-angled triangle. By the cosine formula for right hyperbolic triangles~(see~\cite{slothers2023hyperbolic}), it follows that
\begin{align*}
d\left(r^k,Q^k\right)\geq \atanh\left(\cos\left(\frac{\pi}{2}-\frac{\beta}{2}\right)\tanh\left( d\left(r^k,r^{k-1}\right)\right)\right).
\end{align*}
Using that $\tanh(x)$ is monotonly increasing and for small $x$, $\tanh(x)\geq x\left(1-x^2/3\right)$, we get that
\begin{align*}
    \sum_{k \in K_1} \tanh\left(d\left(r^k,r^{k-1}\right)\right)=\infty.
\end{align*}
But then, since $\atanh(x)\geq x$, we also have 
\begin{align*}
    \sum_{k \in K_1} d(r^k,Q^k) \geq \sum_{k \in K_1} \atanh\left(\cos\left(\frac{\pi}{2}-\frac{\beta}{2}\right)\tanh\left( d\left(r^k,r^{k-1}\right)\right)\right)=\infty,
\end{align*}
thus the bisector $b_1^k$ eventually surpasses $P_0$, a contradiction, or the distance between the cop $c_1$ and robber decreases to $0$.  
\end{proof}

We now use Lemma~\ref{lem:capturepolygon} to bound $c_0$ from above for the special surfaces $S(g),S'(g)$ and $N(g)$. 
\begin{theorem} \label{thm:catch}
If $g\geq 2$, then  
$c_0(S(g))\leq 5$,  $c_0(S'(g))\leq 6$ and $c_0(N(g))\leq 4$. 
\end{theorem}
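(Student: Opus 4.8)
The plan is to play in the universal cover $\mathcal{D}$ and reduce everything to Lemma~\ref{lem:capturepolygon}. Throughout the game I would track a single lift of the robber's position, fixed at the start to lie in one cell of an arrangement of geodesics that the cops will guard. The observation driving the whole argument is that \emph{one} cop can guard all lifts of a closed geodesic at once: if a cop guards a simple closed geodesic $\gamma\subset S$ via the strategy of Lemma~\ref{lem:guardingpath} (applied to sufficiently long isometric subarcs, which is legitimate because for radii $r<\sys(S)/4$ the relevant disks are isometric to hyperbolic ones by Lemma~\ref{lem:ball-sys}), then the guarding invariant — keeping the cop's distance along $\gamma$ equal to that of the robber's orthogonal projection — is preserved by every deck transformation in $\Gamma$. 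Hence the robber's lift cannot cross any line of the full preimage $p^{-1}(\gamma)\subset\mathcal{D}$, which is a whole $\Gamma$-orbit of pairwise disjoint geodesic lines, while only a single cop is spent.

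Next I would produce, for each of the three surfaces, an explicit finite family of simple closed geodesics that \emph{fills} $S$, so that its preimage in $\mathcal{D}$ is an arrangement of geodesic lines whose complementary cells are all bounded convex polygons. The natural candidates come from the dihedral symmetry of the regular fundamental polygon $P(k,\theta)$ together with its gluing word: the geodesics that run \emph{straight through} the cone point(s) of $S$, obtained by chaining consecutive edges of $P$ whose ends meet at angle $\pi$ around an identified vertex. I would select this family so that the cell containing the robber's lift is a convex polygon carrying exactly one side from each chosen geodesic, i.e.\ distinct sides lie on lifts of distinct closed geodesics. Carrying out this bookkeeping for the three identification patterns — $a_{4i-3}\sim a_{4i-1}^{-1},\ a_{4i-2}\sim a_{4i}^{-1}$ for $S(g)$; the opposite-edge gluing $a_i\sim a_{i+2g+1}^{-1}$ for $S'(g)$; and $a_{2i}\sim a_{2i+1}$ for $N(g)$ — is what should produce the bounds $5$, $6$ and $4$, respectively, uniformly in $g$.

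With the confining family in hand I would invoke Lemma~\ref{lem:capturepolygon}. Guarding the chosen closed geodesics pins the robber's lift inside one bounded convex cell, and because each side of that cell sits on a different closed geodesic, the corresponding cops act as independent boundary guards — precisely as the bisector-shrinking strategy in the proof of Lemma~\ref{lem:capturepolygon} demands. That lemma then catches the robber inside the cell, using a number of cops equal to the number of sides of the cell, namely $5$, $6$, or $4$.

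The hard part is the middle step, and in particular the requirement that the robber's cell be bounded by \emph{distinct} geodesics on every side. If two sides of the cell were lifts of the \emph{same} closed geodesic, a single cop would have to guard both simultaneously, which the independent-cop mechanism of Lemma~\ref{lem:capturepolygon} forbids; this degeneracy is exactly why the flat torus, whose obvious filling pair has it, needs an extra cop. Establishing that the straight-through geodesic family of each surface admits a complementary cell of the stated small size with all sides distinct, and that the size stays constant as $g$ grows, is the crux. The remaining ingredients — boundedness and convexity of the cells, and the systole/diameter estimates needed to legitimately guard each geodesic — are routine consequences of Lemmas~\ref{lem:guardingpath}, \ref{lem:ball-sys} and~\ref{lem:capturepolygon}.
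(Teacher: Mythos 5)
Your proposal has a genuine gap at its foundation: the claim that \emph{one} cop can guard a simple closed geodesic $\gamma$, i.e.\ all lines of $p^{-1}(\gamma)$ at once. Lemma~\ref{lem:guardingpath} applies only to isometric paths, and a closed geodesic of length $L$ is isometric only on subarcs of length at most $L/2$; guarding such a subarc does not prevent crossings elsewhere on $\gamma$. The invariant you propose --- the cop matching the robber's orthogonal projection onto $\gamma$ --- cannot be maintained by a $1$-Lipschitz pursuer, because the nearest-point projection onto a closed geodesic (equivalently, onto the $\Gamma$-orbit of disjoint lines) is discontinuous at the cut locus: an arbitrarily small step of the robber can move the projection point by nearly $L/2$ along $\gamma$. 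Nor does equivariance rescue this. If the cop's lift $\tilde c$ tracks the projection of the tracked robber lift $\tilde r$ onto one line $\tilde\gamma$, then the cop's lifts lying on another line $g\tilde\gamma$ of the orbit have the form $gh\tilde c$ with $h\in\mathrm{Stab}(\tilde\gamma)$, and these track the projections of the \emph{other} robber lifts $gh\tilde r$ --- not of $\tilde r$. So when $\tilde r$ approaches $g\tilde\gamma$, no cop lift is matched to it there, and the robber crosses. Guarding a closed geodesic genuinely costs two cops (two isometric half-arcs), so your accounting of one cop per geodesic, and hence the bounds $5,6,4$, does not survive; the crux you explicitly defer --- a constant-size family of closed geodesics, independent of $g$, whose lifted arrangement traps the robber in a bounded convex cell with all sides on distinct geodesics --- is also left entirely unproved, and it is far from clear such a static family exists for large $g$.

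The paper avoids both problems by a structurally different, dynamic argument. It never guards closed geodesics: the cops guard the compact radial segments $\overline{Ov_j}$ from the center $O$ of $P\left(4g,\frac{2\pi}{4g}\right)$ to the (single, identified) vertex, to which Lemma~\ref{lem:guardingpath} legitimately applies. The identifications $a_{4i-3}\sim a_{4i-1}^{-1}$ and $a_{4i-2}\sim a_{4i}^{-1}$ ensure that a robber confined between two guarded spokes $\overline{Ov_{1+4i}}$ and $\overline{Ov_{5+4i}}$ stays there even when exiting the fundamental polygon, since he re-enters through an identified edge of the same block of four triangles. Three cops then leapfrog spokes to sweep the polygon until the robber is pinned in such a block --- this \emph{reuse} of cops is what keeps the count independent of $g$, something a static arrangement cannot do. Finally, cops guard $\overline{Ov_2},\overline{Ov_3},\overline{Ov_4}$, confining the robber to $Ov_1v_2\cup Ov_3v_4$ or $Ov_2v_3\cup Ov_4v_5$, which embeds in $\mathcal{D}$ as a single quadrilateral guarded by four cops, and Lemma~\ref{lem:capturepolygon} finishes. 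Note also that even granting your guards, handing the same cops over to the bisector strategy of Lemma~\ref{lem:capturepolygon} requires their positions to make the initial bisector polygon bounded; cops sitting at unspecified points far along lifted lines would need a repositioning argument you do not supply.
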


\begin{proof}
 We will give the proof only for $S(g)$, the proof for the other surfaces is similar.  Let $O$ be the midpoint of the fundamental polygon $P\left(4g, \frac{2\pi}{4g}\right)$. We will play the game in the covering space and choose the player's positions such that they are in $P\left(4g, \frac{2\pi}{4g}\right)$. 
We will first use the cops $c_1,c_2,c_3$ to guard isometric paths. We start by moving cop $c_1$ to the isometric path $\overline{Ov_1}$, cop $c_2$ to the isometric path $\overline{Ov_5}$ and cop $c_3$ to the isometric path $\overline{Ov_9}$. By Lemma~\ref{lem:guardingpath} we can assume that after a finite amount of time the cops guard the respective isometric paths.  Now if the robber is in one of the triangles $ Ov_jv_{j+1}$ for some $1\leq j \leq 4$, then the robber's moves are restricted to the specified triangles since $a_{1}=a_{3}^{-1}$ and $a_{2}=a_{4}^{-1}$. Similarly if the robber is contained in one of the triangles  $ Ov_jv_{j+1}$ for some $5\leq j \leq 8$ his moves are restricted to these triangles. If the robber is outside $ Ov_jv_{j+1}$ for some $1\leq j \leq 8$, we move cop $c_2$ to the isometric path $\overline{Ov_{13}}$ and wait until she is guarding it. If the robber is in one of the triangles $Ov_jv_{j+1}$ for $9\leq j \leq 12$, his moves are restricted to these triangles. If the robber is not contained in one of these triangles we keep going for $i=3,4,\dots$ in the same way, moving the cop currently guarding $\overline{Ov_{1+4i}}$ to guard $\overline{Ov_{1+4(i+2)}}$ unless $i+2=g$, in which case the robber is contained in one of $Ov_{4g-3}v_{4g-2},Ov_{4g-2}v_{4g-1}, Ov_{4g-1}v_{4g}$ or $Ov_{4g}v_{1}$, cop $c_1$ guards $Ov_1$ and one of $c_2$ or $c_3$ guards $Ov_{4g-3}$.
 
 Hence we can assume without loss of generality that cop $c_1$ guards $\overline{Ov_1}$, cop $c_2$ guards $\overline{Ov_5}$ and the robber is in one of the triangles $ Ov_jv_{j+1}$ for some $1\leq j \leq 8$. Cop $c_3,c_4,c_5$ will guard $\overline{Ov_2},\overline{Ov_3},\overline{Ov_4}$, respectively. Now the robber is captured in either $R_1=Ov_1v_2\cup Ov_3v_4$ or $R_2=Ov_2v_3\cup Ov_4v_5$. The regions $R_1$ and $R_2$ can be embedded in the covering space $\mathcal{D}$ such that they form a quadrilateral which is guarded by four of the cops. By Lemma~\ref{lem:capturepolygon} we can now catch the robber.
\end{proof}

\section{Generalizations}
\label{sec:generalizations}
The proof of Theorem~\ref{thm:mainthm} can be generalized to higher dimensional hyperbolic manifolds. We first prove the following lemma, which will replace the isometric path lemma in the proof of Theorem~\ref{thm:mainthm}.  
\begin{lemma}
\label{lem:ball-guard}
Let the Cops and Robber game be played on an Euclidean disk, a disk on the unit sphere of radius at most $\frac{\pi}{2}$ or a hyperbolic disk $B^2$. If the cops position $c^{k_0}$ is on the line between the center $O$ of $B^2$ and the position of the robber $r^{k_0}$, then the cop can guard the disk of radius $\frac{1}{2}(d(O, c^{k_0})+d(O, r^{k_0}))$ and center $O$.
\end{lemma}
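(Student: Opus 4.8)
The plan is to have the cop shadow the robber by a radial reflection across the circle $C_\rho=\{p: d(O,p)=\rho\}$, where $\rho=\tfrac12\bigl(d(O,c^{k_0})+d(O,r^{k_0})\bigr)$; this is the two-dimensional analogue of the distance-matching idea behind the Isometric Path Lemma~\ref{lem:guardingpath}. Write $a=d(O,c^{k_0})$ and $b=d(O,r^{k_0})$, so that $\rho=\tfrac12(a+b)$ and $a\le\rho\le b$. I work in geodesic polar coordinates $(s,\phi)$ centred at $O$, in which all three spaces carry the warped-product metric $ds^2+f(s)^2\,d\phi^2$ with $f(s)=s$ (Euclidean), $f(s)=\sinh s$ (hyperbolic) or $f(s)=\sin s$ (spherical). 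Let $K=\{s\le\rho\}$ be the disk to be guarded and define the guarding map $F$ by $F(s,\phi)=(\,\max\{0,\,2\rho-s\},\,\phi\,)$; it fixes $\partial K=C_\rho$ pointwise and reflects radii across $\rho$. The point of choosing this particular $\rho$ is that the hypothesis places $c^{k_0}$ on the ray $\overline{Or^{k_0}}$ at radius $a=2\rho-b$, that is, $c^{k_0}=F(r^{k_0})$, so the cop begins already in guarding position and needs no adjustment phase.

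The cop's strategy is simply to maintain the invariant $c^{k}=F(r^{k})$ for all $k\ge k_0$. Suppose it holds after round $k$ and the robber moves to $r^{k+1}$ with $d(r^{k},r^{k+1})\le\tau(k+1)$ along a geodesic $\gamma$. If $\gamma$ meets $C_\rho$, then after subdividing the step at the first crossing point $x\in\partial K$ the robber coincides with its own image $F(x)=x$, and since $F$ is $1$-Lipschitz on the exterior $E=\{s\ge\rho\}$ (established below) the cop can travel from $F(r^{k})$ to $x$ within budget and catch him; this is exactly what it means to guard $K$, since the robber cannot enter the disk without first reaching $C_\rho$. Otherwise $\gamma$ stays in $E$, and it suffices to know that $d(F(r^{k}),F(r^{k+1}))\le\ell(\gamma)\le\tau(k+1)$, so the cop can move to $F(r^{k+1})$ and restore the invariant. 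Thus correctness reduces to the single claim that $F$ is $1$-Lipschitz on $E$.

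To prove the claim I compute the operator norm of $dF$ at an interior point $(s,\phi)$ with $s>\rho$, using the orthonormal frames $\{\partial_s,\,f(s)^{-1}\partial_\phi\}$ at the source and $\{\partial_{s'},\,f(s')^{-1}\partial_\phi\}$ at the image, where $s'=2\rho-s$. Since $F$ preserves the angle and negates the radial coordinate, in these frames $dF=\operatorname{diag}\bigl(-1,\ f(2\rho-s)/f(s)\bigr)$, so its singular values are $1$ and $f(2\rho-s)/f(s)$. Because $2\rho-s\le s$ and each $f$ is increasing on the relevant range, the second singular value is at most $1$; here the spherical hypothesis that the disk has radius at most $\tfrac\pi2$ is exactly what keeps $f=\sin$ increasing and $2\rho-s\in[0,\tfrac\pi2]$, making the estimate uniform across the three geometries. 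Hence $\lVert dF\rVert\le1$ wherever $s>\rho$, and integrating along the robber's step $\gamma\subset E$ gives $d(F(r^k),F(r^{k+1}))\le\ell(F\circ\gamma)\le\ell(\gamma)$, as needed. The degenerate region $d(O,p)\ge 2\rho$, where $F$ is clamped to $O$, is disposed of separately by the triangle inequality, since there $d(O,F(q))\le\max\{0,\,2\rho-d(O,q)\}\le d(p,q)$ for every $q\in E$, so $F$ remains $1$-Lipschitz. The main obstacle is isolating the correct guarding map and verifying the $1$-Lipschitz property simultaneously in all three constant-curvature models; once the warped-product computation is in place, the choice $\rho=\tfrac12(a+b)$ makes both the initial condition $c^{k_0}=F(r^{k_0})$ and the capture-at-the-boundary mechanism fall out automatically.
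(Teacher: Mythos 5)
Your proof is correct, and the cop's actual strategy coincides with the paper's: in both cases the cop sits on the radial segment $\overline{Or^{k}}$ at radius $2\rho-d(O,r^{k})$, i.e.\ she mirrors the robber's radial motion so that $d(O,c^{k})+d(O,r^{k})$ does not decrease. What differs is the verification. The paper argues synthetically: it splits into cases according to whether the robber's geodesic segment meets the disk, and whether the robber approaches or retreats, and checks move-validity each time by comparing feet of perpendiculars with several applications of the Pythagorean theorem (Euclidean, hyperbolic, or spherical as appropriate). You instead package the whole strategy into the explicit retraction $F(s,\phi)=(\max\{0,2\rho-s\},\phi)$ and reduce everything to one estimate, $\lVert dF\rVert\le 1$ on $\{s\ge\rho\}$, computed once in the warped-product metric $ds^2+f(s)^2\,d\phi^2$; the three geometries are handled uniformly by the monotonicity of $f$, and the hypothesis of radius at most $\frac{\pi}{2}$ on the sphere appears exactly where it should, keeping $f=\sin$ increasing. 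Your capture mechanism ($F$ fixes $C_\rho$ pointwise, so after subdividing at the first crossing the cop's budget suffices to land on the robber) is the same event the paper detects, proved there instead via $d(r^{k+1},c^{k})<d(r^{k+1},r^{k})$. Two remarks: your argument correctly needs only that $F$ contracts lengths along the robber's geodesic step (which stays in $E$ until the first crossing), not global Lipschitzness, and your handling of the clamped region by the triangle inequality is sound; and note that the paper's version proves slightly more --- the guarded radius $d_k$ is non-decreasing, so the cop's protected disk grows when the robber retreats --- whereas your fixed-$\rho$ mirror guards exactly the initial disk, which is all the lemma claims. Your route trades the paper's elementary but case-heavy trigonometry for a cleaner, more unified differential-geometric computation.
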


\begin{proof}
Let $d_k=\frac{1}{2}(d(O, c^{k})+d(O, r^{k}))$ and $D_k$ be the disk of radius $d_k$ and center $O$. We explain the cop's strategy for the time steps $k\geq k_0$. First suppose that the geodesic segment from $r^{k}$ to $r^{k+1}$ passes through $D_k$, we show that the cop can catch the robber. We are allowed to subdivide the robber's step since this is only to the advantage of the robber, so we assume without loss of generality $d(O,r^{k+1})= d_k$. We can assume $r^{k+1}\neq c^{k}$ and we consider the closest point $p$ from $r^{k+1}$ on the geodesic through $O$ and $r^{k}$.
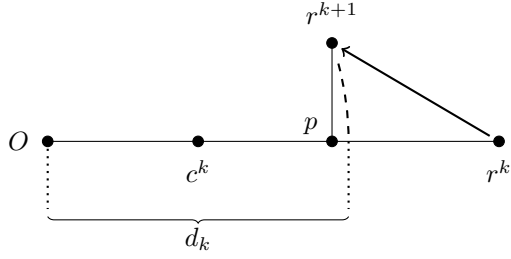
\begin{figure}[htb]
    \centering
    \begin{tikzpicture}
    \draw (0,0) -- (6,0);
    \filldraw[black] (0,0) circle (2pt) node[label=left:$O$] (o) {};
    \filldraw[black] (6,0) circle (2pt) node[label=below:$r^k$] (rn) {};
    \filldraw[black] (3.779645, 1.3093) circle (2pt) node[label=above:$r^{k+1}$] (rn1) {};
        \filldraw[black] (3.779645, 0) circle (2pt);
    \draw (3.5, 0.2) node {$p$};
    \filldraw[black] (2,0) circle (2pt) node[label=below:$c^k$] (cn) {};
    \draw[thick, ->] (rn) -- (rn1);
    \draw (3.779645, 1.3093) -- (3.779645, 0);

     \draw [decorate,
    decoration = {brace}] (4,-1) -- (0,-1) node [midway,yshift=-0.3cm] {$d_k$};
    \draw[thick, dotted] (4,0) -- (4,-0.9);
    \draw[thick, dotted] (0,0) -- (0,-0.9);
    \draw[thick, dashed] (15:4) arc (15:0:4);
    \end{tikzpicture}
    \caption{Depicted is the point $p$ which is the closest point on $Or^k$ from the robber's position $r^{k+1}$.}
    \label{fig:catchingrobber}
\end{figure}

By the Pythagorean theorem (in the Euclidean or hyperbolic plane or for a right angled triangle on a spherical hemisphere), 
\begin{align*}
    d(O,p)<d_k,
\end{align*}
hence $d(r^{k},p)>d(c^{k},p)$.
But then using the Pythagorean theorem again on the triangles $c^{k}pr^{k+1}$ and $r^{k+1}pr^{k}$ which both have a right angle at $p$, it follows that $d(r^{k+1},c^{k})<d(r^{k+1},r^{k})$, so the cop can catch the robber.

We show now that if $d(O,r^{k+1})>d_k$ and the cop does not pass through $D$, there exists a position of the robber on $\overline{Or^{k+1}}$ such that 
\begin{align}
\label{eq:copcanguard}
    d(O,c^{k+1})-d(O,c^{k})\geq d(O,r^{k})-d(O,r^{k+1}).
\end{align}
By subdividing the robber's step, we can assume that the triangle $Or^{k}r^{k+1}$ has angle at most $\frac{\pi}{2}$ at $O$.
Suppose first the angle at $r^{k}$ in the triangle $Or^{k}r^{k+1}$ is smaller than $\pi$. The cop moves to the point $c^{k+1}$ on $\overline{Or^{k+1}}$ such that
\begin{align*}
       d(O,c^{k+1})-d(O,c^{k})=d(O,r^{k})-d(O,r^{k+1}).
\end{align*}
We show now that this is a valid move, which means $d(c^{k},c^{k+1})\leq d(r^{k},r^{k+1})$.

\begin{figure}[htb]
    \centering
    \begin{tikzpicture}
    \draw (0,0) -- (6,0);
    \draw (0,0) -- (6,2.078460969);
    \filldraw[black] (0,0) circle (2pt) node[label=left:$O$] (o) {};
    \filldraw[black] (6,0) circle (2pt) node[label=below:$r^k$] (rn) {};
    \filldraw[black] (5, 1.73205) circle (2pt) node[label=above:$r^{k+1}$] (rn1) {};
    \filldraw[black] (5, 0) circle (2pt) node[label=below:$p_r$] (pr) {};
    \filldraw[black] (2,0) circle (2pt) node[label=below:$c^k$] (cn) {};
    \filldraw[black] (0.622036*5, 0.622036*1.73205) circle (2pt) node[label=above:$c^{k+1}$] (cn1) {};
    \filldraw[black] (0.622036*5, 0) circle (2pt) node[label=below:$p_c$] (pc) {};
    \draw[thick, ->] (rn) -- (rn1);
    \draw[thick, ->] (cn) -- (cn1);

    \end{tikzpicture}
    \caption{The case where the angle at $r^{k}$ in the triangle $Or^kr^{k+1}$ is smaller than $\pi$.}
    \label{fig:onemovesmallerpi}
\end{figure}
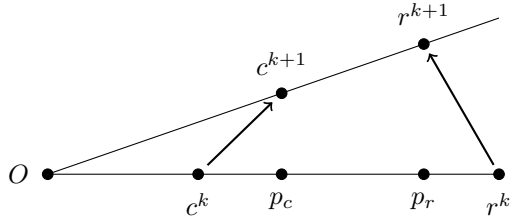

If $r^{k+1}$ is on the geodesic $Or^{k}$, then the cop moves as far as the robber, hence this is a valid move for the cop. Otherwise consider the closest points $p_c,p_r$  on the geodesic $Or^{k}$ of $c^{k+1},r^{k+1}$, respectively. Then using the Pythagorean theorem again, 
$d(O,c^{k+1})>d(O,p_c)$ and $d(O,r^{k+1})>d(O,p_r)$. Therefore it holds that 
$d(p_c,c^{k})< d(O,c^{k+1})-d(O,c^{k})$ and $d(p_r,r^{k})> d(O,r^{k})-d(O,r^{k+1})$. Using the Pythagorean theorem again, $d(c^{k},c^{k+1})<d(r^{k},r^{k+1})$, hence~\ref{eq:copcanguard} is a valid move.

Suppose now the angle at $r^{k}$ in the triangle $Or^{k}r^{k+1}$ is larger than $\pi$. Then this is of advantage to the cop since the robber moves further away from $O$, i.e. $$d(O,r^{k})-d(O,r^{k+1})<0.$$ The cop can simply move at a right angle to $Oc^{k}$ and 
\begin{align*}
    d(O,c^{k+1})-d(O,c^{k})\geq 0,
\end{align*}
which establishes~\eqref{eq:copcanguard}. As long as the cop has not caught the robber, the value $d_k$ is increasing, and as soon as the robber passes through the disk $D_k$, he is caught. This proves the lemma.
\end{proof}

\begin{proof}[Proof of Theorem~\ref{thm:hyperbolicmanifold}.]
Suppose $M=\mathbb{H}^n/\Gamma$ where $\Gamma$ is a torsion-free, discrete group of isometries on $\mathbb{H}^n$. Let $D$ be the diameter of $M$. We position the cop $c_2$ in the covering space such that she is of distance at most $D$ to a point $O$ on the isometric path $\overline{C_1^kR^k}$ at distance between $9D$ and $11D$ from the robber's position (and further from cops $c_1$). 
By Lemma~\ref{lem:ball-guard}, the cop $c_2$ can guard  the $n$-dimensional ball $B$ of radius $4D$ with center $O$ after moving to $O$. We consider the $(n-1)$-dimensional hyperplane $H$ through $O$ which is perpendicular to the geodesic $C_1^kR^k$. Note that cop $c_2$ guards all points on $H$ of distance at most $4D$ to $O$. We denote by $o_H(C_1^k)$ the orthogonal geodesic to $H$ which contains $C_1^k$. Now at the cops' turn, we consider the positions $C_1^k,R^{k+1}$ and $o_H(C_1^k)\cap H$. These points lie on a common $2$-dimensional subspace of $\mathcal{D}^n$ which is isometric to $\mathcal{D}^2$ and is also isometric as a subset of $\mathcal{D}^n$. Now the cop can use the strategies (a) and (b) outlined in Theorem~\ref{thm:mainthm} on this subspace. The proof idea for the lower bound is the same as in the case $n=2$. 
\end{proof}

We are ready to summarise our results about manifolds of constant curvature, i.e.\ \emph{space forms}.
\begin{theorem}
    If $M$ is a compact manifold of constant curvature, then $c(M)= 2$.
\end{theorem}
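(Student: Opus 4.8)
The plan is to split according to the sign of the (constant) sectional curvature and to reduce each case to results already in hand. First I would record the \emph{scale-invariance} of the cop win number: if $M'$ is obtained from $M$ by scaling the metric by a factor $\lambda>0$, then replacing the robber's agility function $\tau$ by $\lambda\tau$ (which still has divergent sum) sets up a bijection between strategies on $M$ and on $M'$ under which all distances scale by $\lambda$; hence $\inf_{n,i}d(c_i^n,r^n)=0$ is preserved, giving $c(M')=c(M)$. Consequently I may normalize the curvature to lie in $\{-1,0,1\}$, so that by the Killing--Hopf theorem (Theorem~\ref{thm:constantcurvature} in the case $\kappa=-1$, together with its classical counterparts for $\kappa=0,1$) the universal cover of $M$ is $\mathbb{H}^n$, $\mathbb{E}^n$ or $S^n$, and $M$ is a hyperbolic, flat, or spherical space form respectively.

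The negative-curvature case is exactly Theorem~\ref{thm:hyperbolicmanifold}, which gives $c(M)=2$. It remains to treat the flat and spherical space forms, and for these I would run the same two-cop covering-space strategy used in Theorems~\ref{thm:mainthm} and~\ref{thm:hyperbolicmanifold}. For the lower bound $c(M)\geq 2$ I would reuse the escape argument of Section~\ref{sec:proof}: a sufficiently small ball of $M$ is isometric to a disk in the model space (the analogue of Lemma~\ref{lem:ball-sys}, valid once the radius is below a quarter of the injectivity radius), and inside such a disk the robber maintains a fixed positive distance from a single cop by always stepping directly away along the geodesic through the cop, exactly as before. For the upper bound I would place cop $c_2$ in the covering space at distance at most $D$ from a point $O$ on $\overline{C_1^kR^k}$ lying between $9D$ and $11D$ from the robber; Lemma~\ref{lem:ball-guard} is stated precisely so as to cover Euclidean and spherical disks as well as hyperbolic ones, so $c_2$ can guard a ball of radius $4D$ about $O$ and thereby confine the robber to one side of the perpendicular hyperplane $H$ through $O$.

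With the robber confined, cop $c_1$ pursues on the totally geodesic $2$-plane spanned by $C_1^k$, $R^{k+1}$ and $o_H(C_1^k)\cap H$, which is isometric to the model $2$-plane ($\mathbb{E}^2$ or a great $2$-sphere $S^2$); there I would run the same strategies (a) and (b) as in Theorem~\ref{thm:mainthm}, forcing the robber across a guarded geodesic segment and extracting a net gain in each block of rounds. The qualitative picture is identical to the hyperbolic case, so the only thing that genuinely changes is the underlying trigonometry.

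The main obstacle is therefore quantitative: the progress estimate for cop $c_1$ rests on the hyperbolic trigonometric Lemmas~\ref{lem:closertogether}--\ref{lem:mean-value}, and in particular on Lemma~\ref{lem:mean-value}, which converts the hyperbolic Pythagorean identity $\cosh c=\cosh a\cosh b$ into a uniform lower bound on the cop's gain. To finish the flat and spherical cases I would re-establish the corresponding statements from the Euclidean identity $c^2=a^2+b^2$ and the spherical identity $\cos c=\cos a\cos b$; each again yields a strictly positive cost for the robber's detour over a bounded range of distances, so the analogue of Lemma~\ref{lem:crossing} goes through and $c_1$ comes arbitrarily close to the robber. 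In dimension two this recovers the value $c=2$ of~\cite{MR4517712} for flat and spherical surfaces, and the slice reduction above lifts it to all dimensions, which completes the proof.
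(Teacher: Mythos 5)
Your normalization by rescaling the metric and your handling of the negative-curvature case coincide with the paper's proof, which likewise rescales so the curvature is $-1$, $0$ or $1$ and then invokes Theorem~\ref{thm:hyperbolicmanifold}. Where you diverge is in the flat and spherical cases: the paper disposes of these not by re-running the hyperbolic forcing strategy but by generalizing \cite[Lemma 8]{MR4517712} (flat) and \cite[Theorem 4]{MR4517712} (sphere), i.e.\ by lifting to the universal cover and projecting known two-cop strategies down to the quotient. For the flat case your route is nevertheless workable: $\mathbb{E}^n$ is unbounded, so the point $O$ can indeed be placed at distance $10D$ ahead of the robber; Lemma~\ref{lem:ball-guard} covers Euclidean disks of arbitrary radius; and the analogue of Lemma~\ref{lem:mean-value} holds because the derivative $t/\sqrt{t^2+b^2}$ of $t\mapsto\sqrt{t^2+b^2}$ is bounded away from $1$ on compact intervals. (Some strict inequalities in Claim~\ref{clm:not-b} degenerate to equalities, since Lambert quadrilaterals become rectangles and $g^{t-1}$ becomes parallel and equidistant to $g_0$, but ties do not damage the maximality argument.)

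The spherical case, however, contains a genuine gap. The universal cover is $S^n$, which is compact with diameter $\pi$, while the quotient $M$ can have diameter $D$ arbitrarily close to (or equal to) $\pi$, e.g.\ $M=S^n$ itself; so there is in general \emph{no} point $O$ at distance between $9D$ and $11D$ from the robber, and your appeal to Lemma~\ref{lem:ball-guard} also fails quantitatively: its spherical case is restricted to disks of radius at most $\frac{\pi}{2}$, whereas you ask $c_2$ to guard a ball of radius $4D$, which can be as large as $4\pi$. More structurally, the blocking mechanism of Section~\ref{sec:proof} relies on geodesics not reconverging, so that slipping around the guarded region ahead of the robber forces a costly detour; on $S^n$ geodesics reconverge at antipodes, the great hypersphere $H$ through $O$ has only a bounded guarded portion, and the robber can circumvent it. Hence ``redoing the trigonometry with $\cos c=\cos a\cos b$'' is not sufficient: positive curvature needs a genuinely different argument, for instance generalizing the sphere strategy of \cite[Theorem 4]{MR4517712} to $S^n$ and pushing it down to $S^n/\Gamma$ via the covering map (cop moves in the cover project to legal moves of the same length, and distances only decrease under projection), which is exactly what the paper's footnote indicates.
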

\begin{proof}
     If $M_{g^p}$ is a Riemannian manifold equipped with a Riemannian metric $g^p$ and constant curvature $C\neq 0$, we can equip the same space with the Riemannian metric ${g^p}^*=|C|\cdot g^p$ instead. This gives a Riemannian manifold $M_{{g^p}^*}$ of constant curvature $1$ or $-1$. Given the step function $\tau$ for $M_{g^p}$, we can play the game with step function $\sqrt{|C|}\cdot \tau$ on $M_{{g^p}^*}$ instead. By rescaling the lengths of the steps appropriately, $k$ cops can win the game on $M_g$ if and only if they can win the game on $M_{{g^p}^*}$.
    The result follows for negative curvature by Theorem~\ref{thm:hyperbolicmanifold}, for constant curvature $0$  by generalizing~\cite[Lemma 8]{MR4517712} and positive constant curvature by generalizing the results for the sphere in~\cite[Theorem 4]{MR4517712}.\footnote{The upper bound follows from the Killing-Hopf theorem and the covering-space method and the lower bound by considering a small enough agility function that allows the robber to move in the opposite direction to the cop without being caught.}
\end{proof}
We also would like to point out that for the cop catching number one can obtain a lower bound of one larger than the dimension of the manifold by a similar argument as in Theorem~\ref{thm:lowerbound}. Further, Lemma~\ref{lem:capturepolygon} can be extended to higher dimensional hyperbolic spaces, if $n$ cops guard the boundary of a convex polytope which contains the robber, then they can catch the robber in finite time.
\section{Concluding remarks}
\label{sec:conclude}
One of the main open question is Conjecture~\ref{conj:Mohar2} on the cop win number of surfaces. A first step towards it would be to lower the multiplicative constant for upper bounds that are linear in $g$, as was done for graphs of genus $g$. Note that in general the cop number of a compact metric space can be infinite~\cite{georgakopoulos2023compact}. It would be nice to find more properties of spaces that guarantee a finite (or constant) cop win number.

When considering compact surfaces of constant curvature, it is still open whether (and how) the cop catch number grows with the genus of the surface. It is easy to show that a linear number (depending on the genus) of cops is sufficient using Lemma~\ref{lem:capturepolygon}, by cutting the surface into smaller parts using isometric paths (similarly as was done in~\cite{Mo22}). We obtained that on natural constant curvature surfaces the cop number is bounded by a constant (irrespectively of the genus). A first step towards understanding the cop catch number on constant curvature surfaces would be to find the exact cop catch number for the surfaces $S(g),S'(g)$ and $N(g)$. More importantly, is the cop catch number bounded by a constant on hyperbolic surfaces?

\bibliographystyle{amsplain}
\bibliography{bifile.bib}

\appendix
\section{Proofs of technical lemmas}
\label{ap:proofs}

\begin{proof}[Proof of Lemma~\ref{lem:ball-sys}]
     Consider $B(a,r)$ as a disk in the universal covering space $\mathcal{D}$. Let $x,y \in B(a,r)$, then the isometric path $g$ between $x$ and $y$ in the copy of $B(a,r)$ in the universal covering space $\mathcal{D}$ is a locally isometric path from $x$ to $y$ in $S$. Suppose $\overline{xy}\neq g$. Then the closed path $P$ going from $x$ to $y$ along $\overline{xy}$ and from $y$ to $x$ along $g$ is of length less than $\sys(S)$ and consists of two paths which are locally isometric and bounds a region which is non-empty.

    The Gauss-Bonnet Theorem states that for a region $R\subset S$ with sectional curvature $K$,  piecewise smooth boundary $\partial(R)$, 
    \begin{align*}
        \int_R K \, dA+\int_{\partial(R)} k_g \, ds=2\pi \chi(R),
    \end{align*}
    where $dA$ is the element of area of the surface, and $ds$ is the line element along the boundary, and $\int_{\partial(R)} k_g ds$ is the sum of the corresponding integrals of the geodesic curvature $k_g$ along the smooth portions of the boundary, plus the sum of the angles by which the smooth portions turn at the corners of the boundary.
    
    Consider the curve $P$ which is piecewise geodesic. Since the curve $P$ has length strictly smaller than $\sys(S)$, it has to be contractible. The curve $P$ bounds a region $R$. Since the curvature of locally isometric curves is $0$, by Gauss-Bonnet 
    \begin{align*}
        \int_{R} -1 dA + \theta_1-\pi +\theta_2-\pi = 2\pi \chi(R),
    \end{align*}
    where $\theta_1,\theta_2$ are the exterior angles between the locally isometric paths at $x$ and $y$.
    Since $P$ is contractible, the Euler characteristic of $R$ is $1$, hence $\theta_1=\theta_2=2\pi$ and $\int_R 1 dA=0$, which means that the isometric path between $x$ and $y$ goes along the geodesic $g$.
\end{proof}

In the following proofs, denote $d(B,C)=a$, $d(A,C)=b$ and $d(A,B)=c$, $\alpha$ is the angle at $A$, $\beta$ is the angle at $B$ and $\gamma$ is the angle at $C$.

\begin{proof}[Proof of Lemma~\ref{lem:closertogether}]
This is a simple application of the Pythagorean Theorem for hyperbolic triangles. Suppose $X,Y\in \overline{AC}$ with $X\in \overline{YC}$. Since $\cosh$ is strictly increasing on the positive real axis and $\acosh$ is strictly increasing
\begin{align*}
d(Y,B)&=\acosh(\cosh(a)\cosh(d(Y,C)))\\
&\geq \acosh(\cosh(a)\cosh(d(X,C)))= d(X,B)
\end{align*}
with equality if and only if $X=Y$. The lemma follows by using this inequality twice, 
\begin{align*}
    d(X_2,B)-d(X_3,B) \leq c-d(X_3,B) \leq c-d(X_1,B).
\end{align*}
Clearly, $d(X_2,B)-d(X_3,B)=c-d(X_1,B)$ if and only if 
$X_2=A$ and $X_3=X_1$.
\end{proof}

\begin{proof}[Proof of Lemma~\ref{lem:shifttoright}]
    For a vertex $X\in \overline{AC}$, by the Pythagorean Theorem for hyperbolic triangles
    \begin{align*}
d(X,B)=\acosh(\cosh(a)\cosh(d(X,C))).
    \end{align*}
    The second derivative with respect to $d(X,C)$ is 
\begin{align*}
    \frac{\partial^2 d(X,B)}{\partial d(X,C)^2} =\frac{(\cosh(d(X,C)) \cosh(a) \sinh^2(a))}{(-1 + \cosh^2(a) \cosh^2(d(X,C)))^{3/2}}.
\end{align*}
Therefore, $\frac{\partial^2 d(X,B)}{\partial d(X,C)^2}>0$ for positive values of $a$ and $d(X,C)$, hence $d(X,B)$ is convex for $d(X,C)>0$.  This means that the secant 
\begin{align*}
    \frac{d(X,B)-d(Y,B)}{d(X,C)-d(Y,C)}
\end{align*}
is monotonely increasing (in both $d(X,C)$ and $d(Y,C)$). Consequently,
\begin{align*}
\frac{c-d(X_1,B)}{d(A,X_1)}&=
    \frac{c-d(X_1,B)}{b-d(X_1,C)}\\
    &\geq 
    \frac{d(X_2,B)-d(X_3,B)}{d(X_2,C)-d(X_3,C)}=\frac{d(X_2,B)-d(X_3,B)}{d(X_2,X_3)},
\end{align*}
and since $d(A,X_1)=d(X_2,X_3)$, this proves the lemma.
\end{proof}

\begin{proof}[Proof of Lemma~\ref{lem:intuition}]
By the law of cosines
\begin{align*}
    \gamma=\arccos\left(\frac{\cosh(c)-\cosh(b)\cosh(a)}{-\sinh(a)\sinh(b)}\right).
\end{align*}
Since $\arccos$ is monotonely decreasing, we only need to show that $f$ as defined by below equation is monotonely increasing. We define
\begin{align*}
f(b):=\frac{\cosh(c)-\cosh(b)\cosh(a)}{-\sinh(a)\sinh(b)}.
\end{align*}
The derivative of $f$ is 
\begin{align*}
f'(b)&=\frac{\cosh(a)\sinh(a)\sinh(b)^2+\sinh(a)\cosh(b)(\cosh(c)-\cosh(b)\cosh(a))}{(-\sinh(a)\sinh(b))^2}\\
&=\frac{-\cosh(a)\sinh(a)+\sinh(a)\cosh(b)\cosh(c)}{\sinh(a)^2\cosh(b)^2},
\end{align*}
where we use that $\sinh(b)^2-\cosh(b)^2=-1$. Now since $\alpha<\frac{\pi}{2}$ it follows that $\cosh(b)\cosh(c) \geq \cosh(a)$ and hence the derivative is non-negative. This proves the lemma.

\end{proof}

\begin{proof}[Proof of Lemma~\ref{lem:mean-value}]
    Note that by the mean value theorem for $x<y$ $$\acosh(w\cosh(x))-\acosh(w\cosh(y))\geq c \cdot (y-x)$$ where $c=\min_{\xi \in [x,y]} \partial/\partial \xi (\acosh(w \cosh(\xi)))$. Since $w>1$, the partial derivative is 
\begin{align*}
\frac{\partial}{\partial \xi}\acosh(w \cosh(\xi)) = \frac{w \sinh(\xi)}{\sqrt{w^2-1 + w^2 \sinh^2(\xi)}}.
\end{align*}
Taking $x,y \in [t_1, t_2]$ with $t_1>0$ and noting that $\sinh(x)$ is monotonely increasing we get that 
\begin{align*}
    c\geq \frac{w \sinh(t_1)}{\sqrt{w^2-1 + w^2 \sinh^2(t_2)}}\geq \eta
\end{align*}
for some $\eta>0$.
\end{proof}

\end{document}